\newtheorem{thm}{Theorem}[section]
\newtheorem{cor}[thm]{Corollary}
\newtheorem{claim}[thm]{Claim}
\newtheorem{lemma}[thm]{Lemma}
\newtheorem{prop}[thm]{Proposition}
\theoremstyle{definition}
\newtheorem{definition}[thm]{Definition}
\def\rquotient#1#2{%
	\makeatletter
	\raise.3ex\hbox{$#1$}/\lower.3ex\hbox{$#2$}%
	\makeatother
}	
\newcommand{\subjclass}[2][2010]{%
	\let\@oldtitle\@title%
	\gdef\@title{\@oldtitle\footnotetext{#1 \emph{Mathematics subject classification.} #2}}%
}
\newcommand{\keywords}[1]{%
	\let\@@oldtitle\@title%
	\gdef\@title{\@@oldtitle\footnotetext{\emph{Key words and phrases.} #1.}}%
}
\newcommand{\Address}{{% additional braces for segregating \footnotesize
		\bigskip
		\small
		
		\textsc{University of Montpellier\\ 
Institut Math\'ematiques Alexander Grothendieck\\
Place Eug\`ene Bataillon\\
34090 Montpellier (France)}\par\nopagebreak
		\textit{E-mail address}: \texttt{anthony.genevois@umontpellier.fr}
		
}}
\title{Rotation groups virtually embed into right-angled rotation groups}
\date{\today}
\author{Anthony Genevois}
\subjclass{Primary 20F65. Secondary 20F55.}
\keywords{Graph products, quasi-median graphs, mediangle graphs}
\begin{document}

\maketitle

\begin{abstract}
It is a theorem due to F. Haglund and D. Wise that reflection groups (aka Coxeter groups) virtually embed into right-angled reflection groups (aka right-angled Coxeter groups). In this article, we generalise this observation to \emph{rotation groups}, which can be thought of as a common generalisation of Coxeter groups and graph products of groups. More precisely, we prove that rotation groups (aka \emph{periagroups}) virtually embed into right-angled rotation groups (aka graph products of groups). 
\end{abstract}

\tableofcontents

\section{Introduction}

\noindent
It is well-known that, in some sense, Coxeter groups are precisely the groups that arrive as abstract reflection groups. More precisely, a \emph{reflection system} is the data of a group $G$, of a subset $R \subset G$, and a connected graph $\Omega$ on which $G$ acts such that:
\begin{itemize}
	\item $R$ is stable under conjugation and it generates $G$;
	\item every edge of $\Omega$ is flipped by a unique element of $R$ and every element of $R$ flips an edge of $\Omega$;
	\item for every $r \in R$, removing all the edges flipped by $r$ separates the endpoints of any edge flipped by $r$.
\end{itemize}
Then a group is part of a reflection system if and only if it is a Coxeter group. In \cite{Mediangle}, the author proposed a natural generalisation of reflection systems as \emph{rotation systems}. The basic idea is that one replaces involutions flipping endpoints of edges with arbitrary groups acting freely and transitively on complete graphs. More formally, a \emph{rotation system} is the data of a group $G$, a collection of subgroups $\mathcal{R}$, and a connected graph $\Omega$ on which $G$ acts such that:
\begin{itemize}
	\item $\mathcal{R}$ is stable under conjugation and it generates $G$.
	\item For every clique $C \subset \Omega$, there exists a unique $R \in \mathcal{R}$ such that $R$ acts freely-transitively on the vertices of $C$. One refers to $R$ as the \emph{rotative-stabiliser} of~$C$.
	\item Every subgroup in $\mathcal{R}$ is the rotative-stabiliser of a clique of $\Omega$.
	\item For every clique $C \subset \Omega$, removing the edges of all the cliques having the same rotative-stabiliser as $C$ separates any two vertices in $C$.
\end{itemize}
Then a group is part of a rotation system if and only if it is a \emph{periagroup} (whose factors come from $\mathcal{R}$). We refer to \cite{Mediangle} for a precise statement regarding the equivalence between rotation groups and periagroups.

\begin{definition}\label{def:Periagroups}
Let $\Gamma$ be a (simplicial) graph, $\lambda : E(\Gamma) \to \mathbb{N}_{\geq 2}$ a labelling of its edges, and $\mathcal{G}= \{G_u \mid u \in V(\Gamma)\}$ a collection of groups indexed by the vertices of $\Gamma$, referred to as the \emph{vertex-groups}. Assume that, for every edge $\{u,v\} \in E(\Gamma)$, if $\lambda(\{u,v\})>2$ then $G_u,G_v$ have order two. The \emph{periagroup} $\Pi(\Gamma,\lambda,\mathcal{G})$ admits
$$\left\langle G_u \ (u \in V(\Gamma)) \mid \langle G_u,G_v \rangle^{\lambda(u,v)}= \langle G_v,G_u \rangle^{\lambda(u,v)} \ (\{u,v\} \in E(\Gamma)) \right\rangle$$
as a relative presentation. Here, $\langle a,b \rangle^k$ refers to the word obtained from $ababab \cdots$ by keeping only the first $k$ letters; and $\langle G_u,G_v \rangle^k = \langle G_v,G_u \rangle^k$ is a shorthand for: $\langle a, b \rangle^k = \langle b,a \rangle^k$ for all non-trivial $a \in G_u$, $b \in G_v$. 
\end{definition}

\noindent
Periagroups of cyclic groups of order two coincide with Coxeter groups; and, if $\lambda \equiv 2$, all the relations are commutations and one retrieves graph products of groups (such as right-angled Artin groups). Thus, periagroups can be thought of as an interpolation between Coxeter groups and graph products of groups. Periagroups of cyclic groups, a common generalisation of Coxeter groups and right-angled Artin groups, also appear in \cite{MR1076077}, and are studied geometrically in \cite{Soergel} under the name \emph{Dyer groups}. Periagroups of cyclic groups of order two or infinite can also be found in \cite{MR1930963} as \emph{weakly partially commutative Artin-Coxeter groups}. 

\medskip \noindent
It is proved in \cite{MR2646113} that (finitely generated) Coxeter groups virtually embeds in right-angled Coxeter groups. In other words, a reflection group always virtually embeds into some right-angled reflection group. In this article, we extend this assertion from reflection groups to arbitrary rotation groups. More formally:

\begin{thm}\label{thm:BigIntro}
Let $\Pi(\Gamma, \mathcal{G},\lambda)$ be a periagroup with $\Gamma$ finite. There exists a finite graph $\Phi$ and a collection $\mathcal{H}$ indexed by $V(\Phi)$ of groups isomorphic to groups from $\mathcal{G}$ such that the periagroup $\Pi(\Gamma, \mathcal{G},\lambda)$ virtually embeds into the graph product $\Phi \mathcal{H}$ as a virtual retract. 
\end{thm}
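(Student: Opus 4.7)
The plan is to generalise the Haglund--Wise argument for Coxeter groups \cite{MR2646113}, substituting the mediangle-graph machinery of \cite{Mediangle} for the Davis complex. Since $\Pi(\Gamma,\mathcal{G},\lambda)$ is part of a rotation system, it acts on a connected graph $X$ equipped with a family of ``hyperplanes'' (one per orbit of cliques sharing a rotative-stabiliser) whose combinatorics generalise the hyperplanes of a CAT(0) cube complex. Finiteness of $\Gamma$ guarantees that $\Pi(\Gamma,\mathcal{G},\lambda)$ acts on $X$ with finitely many orbits of hyperplanes, and the rotative-stabiliser of each hyperplane is isomorphic to a vertex-group from $\mathcal{G}$.

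The first step is to pass to a finite-index subgroup $H \leq \Pi(\Gamma,\mathcal{G},\lambda)$ acting on $X$ ``specially'' in the mediangle sense: no hyperplane is inverted by $H$, no two hyperplanes in the same $H$-orbit cross or osculate, and whether two hyperplanes of distinct $H$-orbits cross depends only on the orbits. For commutation edges of $\Gamma$ (where $\lambda = 2$), specialness follows from the graph-product structure inherited from that edge; for Coxeter-type edges (where $\lambda > 2$ and the vertex-groups are of order two), the Haglund--Wise construction must be imported, using that the two-vertex subgroup is a dihedral Coxeter group to which their arguments directly apply. The global $H$ is obtained by intersecting these local finite-index subgroups together with the kernels of the permutation actions on the finitely many hyperplane orbits.

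Next, define $\Phi$ to have vertex set the $H$-orbits of hyperplanes of $X$, with an edge between two orbits whenever some (equivalently, every) pair of representatives cross; attach to each vertex the rotative-stabiliser of a chosen representative, yielding a collection $\mathcal{H}$ of groups isomorphic to groups in $\mathcal{G}$. The assignment sending each generator of $H$ lying in a rotative-stabiliser to the corresponding generator of the attached vertex-group of $\Phi\mathcal{H}$ defines a homomorphism $H \to \Phi \mathcal{H}$: the defining commutation relations of $\Phi \mathcal{H}$ are satisfied because, by specialness, crossing hyperplanes have commuting rotative-stabilisers. Injectivity follows from the separation property of a rotation system, which identifies reduced expressions in $H$ with geodesic hyperplane sequences in $X$ and matches them bijectively with reduced graph-product words in $\Phi \mathcal{H}$.

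The main obstacle will be producing the retraction $\Phi \mathcal{H} \to H$. The construction I would use defines it on generators by picking, for every vertex of $\Phi$, a representative hyperplane and sending the attached vertex-group isomorphically onto the corresponding rotative-stabiliser sitting inside $H$; the point is to verify that the commutation relations of $\Phi \mathcal{H}$ are preserved under this assignment, which once again reduces to specialness and to the fact that crossing hyperplanes in a mediangle graph have rotative-stabilisers that centralise each other. Once the retraction is in place, composition with the embedding is the identity on the distinguished generating set of $H$, so $H$ is a retract of $\Phi \mathcal{H}$ and $\Pi(\Gamma, \mathcal{G}, \lambda)$ is a virtual retract of $\Phi \mathcal{H}$, as required.
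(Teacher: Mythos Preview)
Your proposal has a genuine gap at its core: the claim that ``crossing hyperplanes have commuting rotative-stabilisers'' is false in mediangle graphs, and this is precisely what separates the periagroup case from the right-angled case. When two hyperplanes $J$ and $K$ cross at angle $\pi/n$ with $n>2$ (which happens exactly along the Coxeter-type edges), their rotative-stabilisers are order-two reflections $r,s$ generating a dihedral group of order $2n$, not a commuting pair. Consequently, your proposed homomorphism $H\to\Phi\mathcal{H}$ is not well-defined: the relation $(rs)^n=1$ holds in $H$ but its image $(r's')^n=r'^ns'^n$ is nontrivial in the graph product when $n$ is odd. The same obstruction kills your retraction in the other direction: mapping commuting vertex-groups of $\Phi\mathcal{H}$ onto non-commuting rotative-stabilisers in $H$ cannot respect the commutation relations of $\Phi\mathcal{H}$.

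The paper handles this by an entirely different mechanism. First, it passes from the mediangle graph $M$ to its \emph{quasi-median completion} $\mathrm{QM}(M)$, which resolves all non-right crossings; the embedding into a graph product then comes from the black-box Theorem~\ref{thm:ConspicialQM} applied to the action on $\mathrm{QM}(M)$, not from an explicit generator-by-generator map. Second, and this is the real work, the paper shows that the obstruction to conspiciality of $\Pi\curvearrowright M$ lies entirely in the Coxeter part of a semidirect decomposition $\Pi=\mathrm{Rot}\rtimes C(\Psi)$ (Proposition~\ref{prop:Obstruction}), and then kills that obstruction by proving a \emph{relative} conspiciality result for Coxeter groups---relative not just to hyperplanes but also to cosets of parabolic subgroups (Theorem~\ref{thm:CoxRelConspicial}). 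Your edge-by-edge intersection of local finite-index subgroups does not achieve this: the interaction between a type-C hyperplane and the carrier of a type-GP hyperplane (which looks like a parabolic coset inside $\Delta$) is exactly what forces the relative strengthening, and it is not visible from any single edge of $\Gamma$.
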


\noindent
In addition to being conceptually appealing, Theorem~\ref{thm:BigIntro} also allows us to deduce various combination results, since many properties stable under taking subgroups and under commensurability are known to hold for graph products of groups. For instance:

\begin{cor}
Let $\Pi:=\Pi(\Gamma, \mathcal{G},\lambda)$ be a periagroup with $\Gamma$ finite. 
\begin{itemize}
	\item If the groups in $\mathcal{G}$ are all linear, then $\Pi$ is linear. \cite{MR4422610, GreenGP}
	\item If the groups in $\mathcal{G}$ are all a-T-menable (resp.\ weakly amenable), then $\Pi$ is a-T-menable (resp.\ weakly amenable). \cite{HaagerupGP, QM, MR3687943}
	\item For every $n \geq 1$, if the groups in $\mathcal{G}$ are all of type $F_n$, then $\Pi$ is of type $F_n$. \cite{MR1293049, MR1317337, MR1377652}
\end{itemize}
\end{cor}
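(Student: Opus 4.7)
The plan is to derive each bullet directly from Theorem~\ref{thm:BigIntro}, combined with the stability result for graph products cited next to that bullet. First I would unpack the theorem: it provides a finite graph $\Phi$, a family $\mathcal{H}$ of groups each isomorphic to some member of $\mathcal{G}$, a finite-index subgroup $\Pi_0 \leq \Pi$, an embedding $\Pi_0 \hookrightarrow \Phi\mathcal{H}$, a finite-index subgroup $H \leq \Phi\mathcal{H}$ containing (the image of) $\Pi_0$, and a retraction $H \twoheadrightarrow \Pi_0$. Each conclusion of the corollary is then obtained by the same four-step pattern: (i) the factors enjoy the property; (ii) the graph product $\Phi\mathcal{H}$ inherits it from its factors via the cited reference, and a fortiori so does its finite-index subgroup $H$; (iii) the property descends from $H$ to $\Pi_0$; (iv) the property passes back up the finite-index inclusion $\Pi_0 \leq \Pi$.

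For linearity, step (ii) is \cite{MR4422610, GreenGP}, step (iii) uses that subgroups of linear groups are linear, and step (iv) uses induction of representations to promote a faithful linear representation of $\Pi_0$ to one of $\Pi$. For a-T-menability and weak amenability, step (ii) is \cite{HaagerupGP, QM, MR3687943}; both classes are stable under passage to arbitrary subgroups and under commensurability, so (iii) and (iv) are standard. For type $F_n$, step (ii) is \cite{MR1293049, MR1317337, MR1377652}, and step (iv) is classical from the covering-space construction relating classifying spaces of finite-index subgroups to those of their overgroups.

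I do not anticipate a genuine obstacle once Theorem~\ref{thm:BigIntro} is in hand; the argument is essentially bookkeeping. The only point requiring attention is step (iii) for type $F_n$: finiteness properties are not inherited by arbitrary subgroups, only by retracts, so the embedding $\Pi_0 \hookrightarrow H$ would not suffice on its own. It is precisely the retraction $H \twoheadrightarrow \Pi_0$ supplied by the ``virtual retract'' strengthening of Theorem~\ref{thm:BigIntro} that lets one transport $F_n$ down to $\Pi_0$, and it is this observation that makes the retract formulation of Theorem~\ref{thm:BigIntro}, as opposed to a plain virtual embedding, the conceptual crux of the corollary.
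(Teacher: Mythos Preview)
Your proposal is correct and matches the paper's approach: the paper does not give a standalone proof of this corollary, but simply presents it as an immediate consequence of Theorem~\ref{thm:BigIntro} together with the cited stability results for graph products, noting that the relevant properties are preserved under subgroups and commensurability. Your write-up is in fact more careful than the paper's one-line justification, and your observation that the \emph{retract} clause (rather than mere subgroup inclusion) is what carries type $F_n$ down to $\Pi_0$ is exactly the reason Theorem~\ref{thm:BigIntro} is phrased with virtual retracts.
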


\noindent
The proof of Theorem~\ref{thm:BigIntro} is geometric. It essentially follows Haglund and Wise's strategy, but in the more general framework of \emph{mediangle} and \emph{quasi-median} graphs. Given our periagroup $\Pi:= \Pi(\Gamma, \mathcal{G},\lambda)$, we consider its Cayley graph $M:=M(\Gamma, \mathcal{G},\lambda)$ associated to the (usually finite) generating set obtained by taking the union of all the vertex-groups. As shown in \cite{Mediangle}, this graph admits a nice geometric structure: it is a \emph{mediangle graph}. (See Sections~\ref{section:Mediangle} and~\ref{section:Periagroups} for more details.) In particular, it has a natural hyperplane structure that allows us, following \cite{QM}, to construct a canonical \emph{quasi-median graph} $\mathrm{QM}:= \mathrm{QM}(\Gamma, \mathcal{G},\lambda)$. More precisely, there is an isometric embedding $M \hookrightarrow \mathrm{QM}$ and the action of $\Pi$ on $M$ extends to $\mathrm{QM}$. (See Sections~\ref{section:QM} and~\ref{section:Popset} for more details.) Following \cite{MR4586831}, it suffices to verify that the action of $\Pi$ on $\mathrm{QM}$ is \emph{conspicial} in order to conclude that $\Pi$ embeds into some graph products. (See Section~\ref{section:Conspicial} for more details.) Unfortunately, the action is usually not conspicial. Our key observation is that the obstruction to conspiciality lies in a specific part of the periagroup that can be avoided by a finite-index subgroup. See Proposition~\ref{prop:Obstruction} for a precise statement. Then, bringing all the pieces together, Theorem~\ref{thm:BigIntro} follows.

\section{Preliminaries}

\subsection{Quasi-median graphs}\label{section:QM}

\noindent
There exist several equivalent definitions of quasi-median graphs; see for instance \cite{quasimedian}. For instance, quasi-median graphs can be defined as retracts of Hamming graphs (i.e.\ products of complete graphs). However, this definition, despite being conceptually simple, is difficult to use in practice. Below is the definition we used in \cite{QM}. It sounds more technical, but it is easier to use in practice, and it will be relevant regarding our definition of mediangle graphs in Section~\ref{section:Mediangle}. 

\begin{minipage}{0.4\linewidth}
\noindent
\includegraphics[scale=0.35]{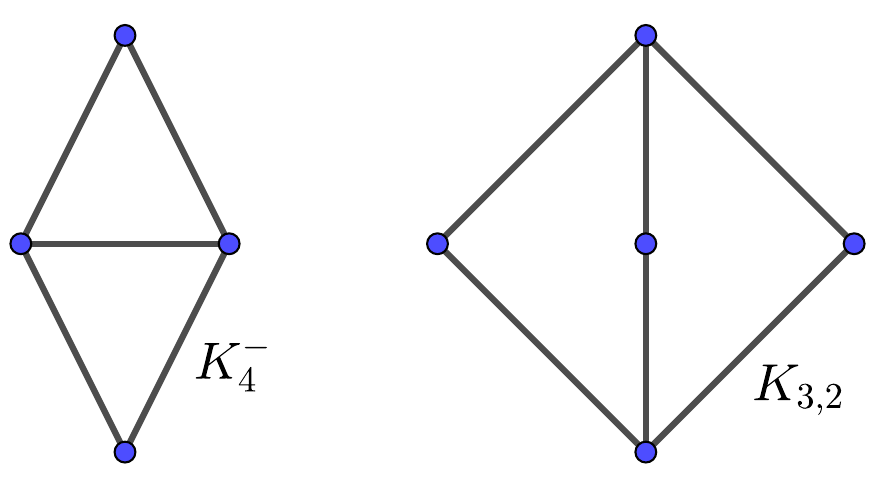}
\end{minipage}
\begin{minipage}{0.59\linewidth}
Recall that the graph $K_{3,2}$ is the bipartite complete graph, corresponding to two squares glued along two adjacent edges; and $K_4^-$ is the complete graph on four vertices minus an edge, corresponding to two triangles glued along an edge. 
\end{minipage}

\begin{definition}
A connected graph $X$ is \emph{weakly modular} if it satisfies the following two conditions:
\begin{description}
	\item[(triangle condition)] for every vertices $a, x,y \in X$, if $x$ and $y$ are adjacent and if $d(a,x)=d(a,y)$, then there exists a vertex $z \in X$ which adjacent to both $x$ and $y$ and which satisfies $d(a,z)=d(a,x)-1$;
	\item[(quadrangle condition)] for every vertices $a,x,y,z \in X$, if $z$ is adjacent to both $x$ and $y$ and if $d(a,x)=d(a,y)=d(a,z)-1$, then there exists a vertex $w \in X$ which adjacent to both $x$ and $y$ and which satisfies $d(a,w)=d(a,z)-2$.
\end{description}
If moreover $X$ does not contain $K_4^-$ and $K_{3,2}$ as induced subgraphs, then it is \emph{quasi-median}. 
\end{definition}

\noindent
The triangle and quadrangle conditions are illustrated by Figure \ref{Quadrangle}.
\begin{figure}[h!]
\begin{center}
\includegraphics[scale=0.45]{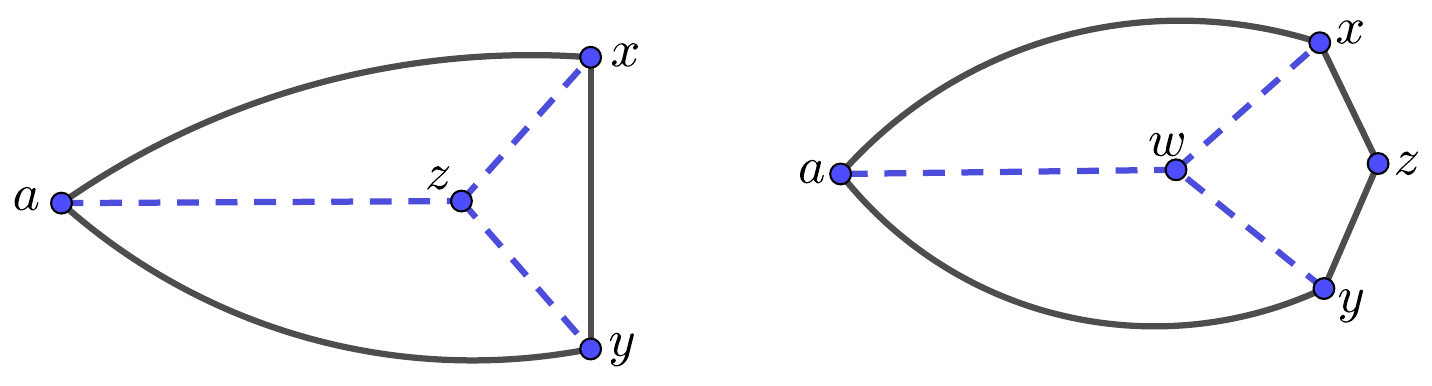}
\caption{Triangle and quadrangle conditions.}
\label{Quadrangle}
\end{center}
\end{figure}

\medskip \noindent
Examples of quasi-median graphs include median graphs, also known as one-skeletons of CAT(0) cube complexes, and products of complete graphs.

\medskip \noindent
Recall that a \emph{clique} is a maximal complete subgraph. According to \cite{quasimedian}, cliques in quasi-median graphs satisfy a strong convexity condition; namely, they are \emph{gated} in the following sense. 

\begin{definition}
Let $X$ be a graph and $Y \subset X$ a subgraph. A vertex $y \in Y$ is a \emph{gate} of an other vertex $x \in X$ if, for every $z \in Y$, there exists a geodesic between $x$ and $z$ passing through $y$. If every vertex of $X$ admits a gate in $Y$, then $Y$ is \emph{gated}.
\end{definition}

\noindent
Given a gated subgraph $Y$, we will refer to the map that associates to every vertex its gate in $Y$ as the \emph{projection to $Y$}. Accordingly, the gate of vertex may be refer to as its projection.

\medskip \noindent
Like for median graphs, a fundamental tool in the study of quasi-median graphs is given by \emph{hyperplanes}. 

\begin{definition}
Let $X$ be a graph. A \emph{hyperplane} $J$ is an equivalence class of edges with respect to the transitive closure of the relation saying that two edges are equivalent whenever they belong to a common triangle or are opposite sides of a square. We denote by $X \backslash \backslash J$ the graph obtained from $X$ by removing the interiors of all the edges of $J$. A connected component of $X \backslash \backslash J$ is a \emph{sector}. The \emph{carrier} of $J$, denoted by $N(J)$, is the subgraph generated by all the edges of $J$. Two hyperplanes $J_1$ and $J_2$ are \emph{transverse} if there exist two edges $e_1 \subset J_1$ and $e_2 \subset J_2$ spanning a $4$-cycle in $X$; and they are \emph{tangent} if they are not transverse but $N(J_1) \cap N(J_2) \neq \emptyset$. 
\end{definition}
\begin{figure}
\begin{center}
\includegraphics[trim={0 16.5cm 10cm 0},clip,scale=0.45]{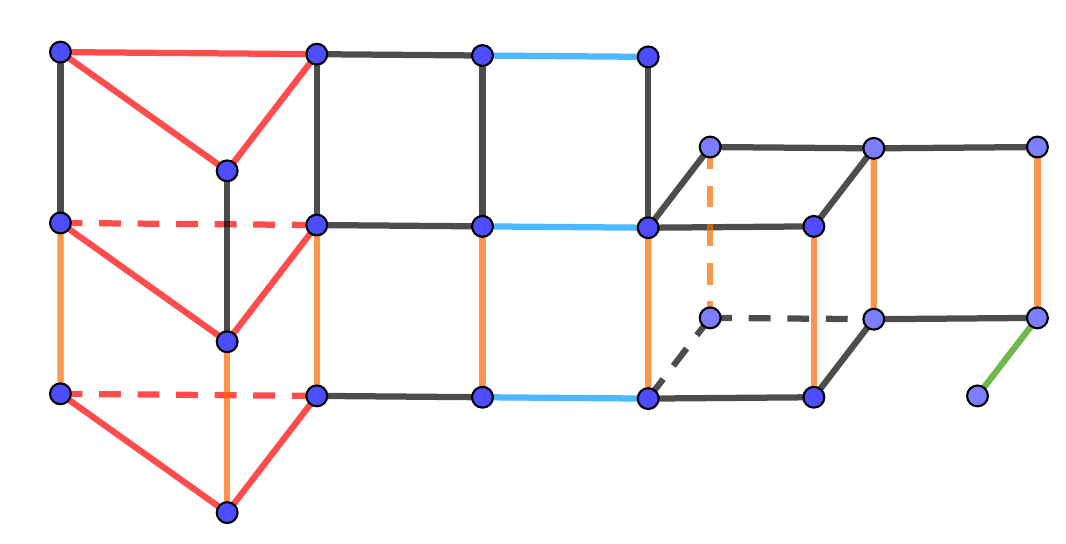}
\caption{A quasi-median graph and some of its hyperplanes. The orange hyperplane is transverse to the red and blue hyperplanes. The green and orange hyperplanes are tangent. The red and blue hyperplanes are neither transverse nor tangent.}
\label{figure3}
\end{center}
\end{figure}

\noindent
See Figure \ref{figure3} for examples of hyperplanes in a quasi-median graph. The key point is that, in some sense, the geometry of a quasi-median graph reduces to the combinatorics of its hyperplanes. This is illustrated by Theorem~\ref{thm:BigHyp} in the more general framework of median graphs.

\subsection{Spaces with partitions}\label{section:Popset}

\noindent
Generalising the construction of a median graph (or equivalently, a CAT(0) cube complex) from a \emph{space with walls}, in \cite{QM} we introduced \emph{spaces with partitions} and showed to construct quasi-median graphs from them. This Section is dedicated to the description of this construction.

\medskip \noindent
Let $X$ be a set and $\mathfrak{P}$ a collection of partitions. We allow two distinct elements of $\mathfrak{P}$ to induce the same partition of $X$, in which case we say that the two elements are \emph{indistinguishable}. We refer to the pieces of the partitions from $\mathfrak{P}$ as \emph{sectors}. 
\begin{itemize}
	\item Two partitions $\mathcal{P},\mathcal{Q}$ are \emph{nested} if there exist sectors $A \in \mathcal{P}$ and $B \in \mathcal{Q}$ such that $D \subset A$ for every $D \in \mathcal{Q} \backslash \{B\}$ and $D \subset B$ for every $D \in \mathcal{P} \backslash \{A\}$.
	\item Two partitions $\mathcal{P},\mathcal{Q}$ are \emph{transverse} if $P$ and $Q$ are not $\subset$-comparable for all $P \in \mathcal{P}$ and $Q\in \mathcal{Q}$.
	\item Two partitions $\mathcal{P},\mathcal{Q}$ are \emph{tangent} if there do not exist $P \in \mathcal{P}$, $Q\in \mathcal{Q}$, $\mathcal{R} \in \mathfrak{P}$, and $R \in \mathcal{R}$ such that $P \subsetneq R \subsetneq Q$ or $Q \subsetneq R \subsetneq P$.
	\item A partition $\mathcal{P}$ separates two points $x,y \in X$ if $x$ and $y$ belong to two distinct sectors of $\mathcal{P}$.
\end{itemize}
The typical example to keep in mind is when $X$ is a quasi-median graph and when the partitions from $\mathfrak{P}$ are given by the hyperplanes of $X$ that cut $X$ into sectors.

\begin{definition}
A \emph{space with partitions} $(X,\mathfrak{P})$ is the data of a set $X$ and a collection of partitions $\mathfrak{P}$ satisfying the following conditions:
\begin{itemize}
	\item for every $\mathcal{P}\in \mathfrak{P}$, $\# \mathcal{P} \geq 2$ and $\emptyset \notin \mathcal{P}$;
	\item for all distinguishable partitions $\mathcal{P},\mathcal{Q} \in \mathfrak{P}$, if there exist two sectors $A\in \mathcal{P}$ and $B \in \mathcal{Q}$ such that $A \subset B$, then $\mathcal{P}$ and $\mathcal{Q}$ are nested;
	\item any two points of $X$ are separated by only finitely many partitions of $\mathfrak{P}$. 
\end{itemize}
\end{definition}

\noindent
For instance, a quasi-median graph endowed with the set of partitions described above defines a space with partitions. 

\begin{definition}
Let $(X,\mathfrak{P})$ be a space with partitions. An \emph{orientation} $\sigma$ is map $\mathfrak{P} \to \{ \text{sectors}\}$ satisfying the following conditions:
\begin{itemize}
	\item $\sigma(P) \in \mathcal{P}$ for every $\mathcal{P} \in \mathfrak{P}$;
	\item $\sigma(\mathcal{P}) \cap \sigma(\mathcal{Q}) \neq \emptyset$ for all $\mathcal{P},\mathcal{Q} \in \mathfrak{P}$.
\end{itemize} 
For every point $x \in X$, 
$$\sigma_x : \mathcal{P} \mapsto \text{sector in $\mathcal{P}$ containing $x$}$$
is the \emph{principal orientation at $x$}.
\end{definition}

\noindent
Roughly speaking, an orientation is a coherent choice of a sector for each partition.

\begin{definition}
Let $(X,\mathfrak{P})$ be a space with partitions. The \emph{quasi-cubulation} $\mathrm{QM}(X,\mathfrak{P})$ is the connected component containing the principal orientations of the graph whose vertices are the orientations and whose edges connect two orientations whenever they differ on a single partition.
\end{definition}

\noindent
The statement below summarises the basic properties satisfied by the quasi-cubulation of a space with partitions. See \cite[Proposition~5.46, Theorem~2.56 and its proof, Lemma~2.60, Corollary~2.51]{QM} for more details.

\begin{thm}\label{thm:Popset}
Let $(X,\mathfrak{P})$ be a space with partitions and let $\iota : X \to \mathrm{QM}$ denote the canonical map from $X$ to the quasi-cubulation $\mathrm{QM}:= \mathrm{QM}(X,\mathfrak{P})$. The following assertions hold:
\begin{itemize}
	\item $\mathrm{QM}$ is a quasi-median graph;
	\item the distance between two orientations $\mu,\nu \in \mathrm{QM}$ coincides with the number of partitions of $\mathfrak{P}$ on which they differ;
	\item the hyperplanes of $\mathrm{QM}$ are $$J_\mathcal{P}:= \left\{ \{\mu,\nu \} \mid \mu(\mathcal{P}) \neq \nu(\mathcal{P}) \text{ but } \mu(\mathcal{Q})= \nu(\mathcal{Q}) \text{ for every } \mathcal{Q} \in \mathfrak{P} \backslash \{ \mathcal{P} \} \right\}, \ \mathcal{P} \in \mathfrak{P};$$
	\item the map $\mathcal{P} \mapsto J_\mathcal{P}$ induces a bijection from $\mathfrak{P}$ to the hyperplanes of $\mathrm{QM}$ that preserves (non-)transversality and (non-)tangency.
\end{itemize}
\end{thm}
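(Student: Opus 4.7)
The plan is to prove the four items in sequence by exploiting the combinatorial difference $\Delta(\mu,\nu) := \#\{\mathcal{P} \in \mathfrak{P} \mid \mu(\mathcal{P}) \neq \nu(\mathcal{P})\}$ between orientations. This number is finite whenever $\mu,\nu$ lie in the same component of $\mathrm{QM}$: two principal orientations $\sigma_x,\sigma_y$ differ precisely on the partitions separating $x$ from $y$, which is finite by the space-with-partitions axioms; and crossing any edge of $\mathrm{QM}$ changes $\Delta$ by exactly one, so finiteness propagates along paths. The same remark yields the easy bound $d_\mathrm{QM}(\mu,\nu) \geq \Delta(\mu,\nu)$.

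\medskip \noindent
The crux is the matching inequality $d_\mathrm{QM} \leq \Delta$, which simultaneously delivers the distance formula and the existence of the claimed paths. I would proceed by induction on $\Delta(\mu,\nu)$: given $\mu \neq \nu$, I want to produce a partition $\mathcal{P}_0$ on which $\mu,\nu$ differ such that the map $\mu'$ obtained from $\mu$ by replacing $\mu(\mathcal{P}_0)$ with $\nu(\mathcal{P}_0)$ is still an orientation. The validity of $\mu'$ reduces to checking $\nu(\mathcal{P}_0) \cap \mu(\mathcal{Q}) \neq \emptyset$ for all $\mathcal{Q} \in \mathfrak{P} \setminus \{\mathcal{P}_0\}$. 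I would select $\mathcal{P}_0$ among the differing partitions so that $\nu(\mathcal{P}_0)$ is $\subset$-minimal in $\{\nu(\mathcal{P}) \mid \mu(\mathcal{P}) \neq \nu(\mathcal{P})\}$; the nested/transverse dichotomy for distinguishable partitions then forces the required intersections to be non-empty, using that $\mu,\nu$ are themselves coherent orientations. I expect this inductive step to be the main obstacle, since it is the one place where every axiom of a space with partitions has to be used simultaneously.

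\medskip \noindent
With the distance formula in hand, the triangle and quadrangle conditions reduce to producing the vertex $z$ or $w$ by flipping $\mu$ on appropriate partitions; the hypotheses on distances to $a$ tell me exactly which sectors to choose, and coherence of the resulting map follows from the same nested/transverse analysis. The prohibition of induced $K_4^-$ and $K_{3,2}$ subgraphs then reduces to a local count: two orientations at distance two differ on exactly two partitions $\mathcal{P}_1,\mathcal{P}_2$, and their common neighbours correspond to the choice of which of these to flip first, giving at most two common neighbours (ruling out $K_{3,2}$); moreover, any two such common neighbours differ on both $\mathcal{P}_1$ and $\mathcal{P}_2$, hence are themselves at distance two (ruling out $K_4^-$).

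\medskip \noindent
For the last two items, I would observe that the edges of $J_\mathcal{P}$ are exactly those along which the orientation of $\mathcal{P}$ is flipped, a property manifestly preserved by ``sharing a triangle'' and ``being opposite sides of a square'', so each actual hyperplane of $\mathrm{QM}$ is contained in some $J_\mathcal{P}$; conversely, the distance formula shows that $J_\mathcal{P}$ is connected, so it is a single hyperplane, and the map $\mathcal{P} \mapsto J_\mathcal{P}$ is a bijection. Finally, transversality and tangency of partitions translate directly into the corresponding notions for their hyperplanes: two partitions are transverse precisely when the four intersections of their sectors yield four distinct orientations spanning a $4$-cycle of edges labelled by them, and tangency on both sides is the negation of transversality together with a common-carrier condition that transfers verbatim through the bijection.
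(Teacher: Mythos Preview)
The paper does not actually prove this theorem: it is imported verbatim from \cite{QM} (see the sentence preceding the statement, which refers to Proposition~5.46, Theorem~2.56, Lemma~2.60 and Corollary~2.51 there). So there is no in-paper argument to compare with; your outline is essentially the standard Sageev--Nica style argument adapted to multi-sector partitions, which is indeed how the cited reference proceeds.

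That said, your sketch has two soft spots that would need real work before it counts as a proof. First, in the inductive step you choose $\mathcal{P}_0$ with $\nu(\mathcal{P}_0)$ $\subset$-minimal and then invoke ``the nested/transverse dichotomy'' to conclude $\nu(\mathcal{P}_0)\cap\mu(\mathcal{Q})\neq\emptyset$. For partitions with more than two pieces this is not automatic: transversality here only says that no sector of one is $\subset$-comparable with a sector of the other, and $A\cap B=\emptyset$ does \emph{not} by itself force a containment (since $A$ could be spread over several pieces of $\mathcal{Q}$). You need an additional argument, using coherence of both $\mu$ and $\nu$ together with the nesting axiom, to rule this out; this is exactly the place where the cited proof does nontrivial work, and your sketch does not yet supply it.

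Second, your treatment of the last two items is too thin. Saying ``the distance formula shows that $J_\mathcal{P}$ is connected, so it is a single hyperplane'' conflates connectedness of the carrier with connectedness under the triangle/opposite-in-square relation; a path of length two already shows these differ. One needs to produce, for any two edges of $J_\mathcal{P}$, an explicit chain of triangles and squares linking them, which is where the quasi-median structure just established has to be used. Likewise, the tangency claim is not ``verbatim'': the partition-side definition (no sector of a third partition is sandwiched between sectors of the two) and the hyperplane-side definition (carriers meet but the hyperplanes are not transverse) look quite different, and matching them requires an argument about how carriers of $J_\mathcal{P}$ sit inside $\mathrm{QM}$. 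These are all handled in \cite{QM}, but your proposal does not yet indicate how.
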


\subsection{Conspicial actions}\label{section:Conspicial}

\noindent
The main reason we introduce quasi-median graphs is that, according to \cite{MR4586831}, one can construct an embedding into a graph product of groups thanks to a good action on a quasi-median graph. In this section, we recall that relevant definitions and statements.

\begin{definition}[\cite{MR4586831}]
Let $G$ be a group acting faithfully on a quasi-median graph $X$. The action is \emph{conspicial} if:
\begin{itemize}
	\item for every hyperplane $J$ and every $g \in G$, $J$ and $gJ$ are neither transverse nor tangent;
	\item for all transverse hyperplanes $J_1,J_2$ and for every $g \in G$, $J_1$ and $gJ_2$ are not tangent;
	\item for every hyperplane $J$, $\mathfrak{S}(J)$ acts freely on $\mathscr{S}(J)$. 
\end{itemize}
\end{definition}

\noindent
Given a hyperplane $J$, $\mathscr{S}(J)$ denotes the set of the sectors delimited by $J$ and $\mathfrak{S}(J)$ denotes the image of $\mathrm{stab}(J)$ in the permutation group of $\mathscr{S}(J)$. 

\begin{thm}[\cite{MR4586831}]\label{thm:ConspicialQM}
Let $G$ be a group acting conspicially on a quasi-median graph~$X$. 
\begin{itemize}
	\item Fix representatives $\{J_i \mid i \in I\}$ of hyperplanes of $X$ modulo the acting of $G$.
	\item Let $\Gamma$ denote the graph whose vertex-set is $\{J_i \mid i \in I\}$ and whose edges connect two hyperplanes whenever they have transverse $G$-translates.
	\item For every $i \in I$, let $G_i$ denote the group $\mathfrak{S}(J_i) \oplus K_i$, where $K_i$ is an arbitrary group of cardinality the number of orbits of $\mathfrak{S}(J_i) \curvearrowright \mathscr{S}(J_i)$.
\end{itemize}
Then $G$ embeds as a virtual retract into the graph product $\Gamma \mathcal{G}$ where $\mathcal{G}= \{G_i \mid i \in I\}$.
\end{thm}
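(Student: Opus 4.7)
The plan is to build an explicit group homomorphism $\Phi: G \to \Gamma\mathcal{G}$ by encoding, for each $g \in G$, the sequence of hyperplanes crossed along a combinatorial geodesic in $X$ from a fixed basepoint $x_0$ to $gx_0$. The third conspicial condition, that $\mathfrak{S}(J_i)$ acts freely on $\mathscr{S}(J_i)$, identifies $\mathscr{S}(J_i)$ with $\mathfrak{S}(J_i) \times O_i$, where $O_i$ indexes the orbits; taking $|K_i| = |O_i|$ then ensures that $G_i = \mathfrak{S}(J_i) \oplus K_i$ acts freely and transitively on $\mathscr{S}(J_i)$. After fixing coset representatives that identify each $G$-translate of $J_i$ with $J_i$ itself, every edge of $X$ crossing a hyperplane $H$ from sector $S$ to sector $S'$ produces a canonical letter $\ell(H,S,S') \in G_{i(H)}$, where $i(H)$ is the orbit of $H$. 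I would then set $\Phi(g) := \ell_1 \cdots \ell_n$, reading letters along any geodesic from $x_0$ to $gx_0$.

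The key point is independence of the chosen geodesic, since two geodesics between the same endpoints differ by elementary moves: swapping consecutive edges dual to transverse hyperplanes, and re-routing inside a clique. The first type is compatible with $\Gamma\mathcal{G}$ by construction of $\Gamma$: transverse hyperplanes in $X$ are $G$-translates of adjacent vertices of $\Gamma$, hence their letters commute. The second lives entirely in one $G_i$ and is handled internally. The first conspicial condition prevents a hyperplane from being crossed twice by a geodesic, so each letter is unambiguous; the second prevents tangencies between distinct $G$-translates of the same orbit, ruling out relations that would violate the commutation pattern of $\Gamma\mathcal{G}$. Once well-definedness is established, the homomorphism property follows by concatenating geodesics $x_0 \to gx_0 \to ghx_0$ and applying the same elementary-move argument, and injectivity is immediate since a non-trivial element $g$ moves $x_0$ across at least one hyperplane whose letter cannot be cancelled.

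To conclude that $\Phi$ realises $G$ as a virtual retract, I would exploit that the Cayley graph of $\Gamma\mathcal{G}$ with respect to $\bigsqcup_i G_i$ is itself a quasi-median graph whose hyperplane combinatorics is encoded by $\Gamma$; the map $\Phi$ promotes to an equivariant combinatorial embedding of $X$ into this Cayley graph that sends hyperplanes to hyperplanes and preserves (non-)transversality and (non-)tangency. A retraction of a finite-index subgroup of $\Gamma\mathcal{G}$ onto $\Phi(G)$ is then produced by gated projection onto the image of $X$, in the spirit of Haglund--Wise's retractions for special cube complexes. The main obstacle I expect is precisely the meticulous analysis that $\Phi$ is well-defined and multiplicative: it reduces to showing that the word associated to a closed loop based at $x_0$ is trivial in $\Gamma\mathcal{G}$, which requires a delicate case analysis of square-clique interactions and tangency patterns in $X$ where each of the three conspicial conditions is used in an essential way. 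Constructing the retraction also demands identifying a finite-index subgroup of $\Gamma\mathcal{G}$ on which the gated projection descends to a genuine group homomorphism, which is standard for cube complexes but must be adapted to the quasi-median setting.
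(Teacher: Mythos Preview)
The paper does not prove this theorem: it is quoted verbatim from \cite{MR4586831} and used as a black box, so there is no argument in the present paper to compare your proposal against. What follows are comments on the internal coherence of your sketch.

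Your overall strategy---label edges by elements of the vertex-groups via the free action of $G_i$ on $\mathscr{S}(J_i)$, read words along geodesics, check independence of geodesic via elementary moves, then produce a retraction---is the standard one, and is indeed how the cited reference proceeds in spirit. Two points deserve correction. First, your attribution of roles to the conspicial conditions is off: the fact that a geodesic crosses each hyperplane at most once is a general feature of quasi-median graphs (Theorem~\ref{thm:BigHyp}(iii) here) and has nothing to do with the group action. The first conspicial condition (no self-transversality or self-tangency within an orbit) is what guarantees that the map $\Phi$ is a \emph{homomorphism}: when you concatenate a geodesic $[x_0,gx_0]$ with $g\cdot[x_0,hx_0]$, the resulting path need not be geodesic, and reducing it to one involves cancelling pairs of crossings of the same hyperplane; for the associated letters to cancel in $\Gamma\mathcal{G}$ you need that distinct $G$-translates of $J_i$ never become transverse or tangent, so that the corresponding letters are never separated by a commutation you cannot undo. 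The second condition (no inter-osculation) is what makes the commutation relations in $\Gamma\mathcal{G}$ match exactly those forced by transversality in $X$, as you say.

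Second, your proposed retraction via ``gated projection onto the image of $X$'' is too optimistic as stated. The image $\Phi(G)\cdot 1$ inside the Cayley graph of $\Gamma\mathcal{G}$ is not in general gated, and gated projection is not a group homomorphism. What one actually does in \cite{MR4586831} is closer to the Haglund--Wise mechanism: one shows that $\Phi(G)$ is a \emph{convex-cocompact} (in the quasi-median sense) subgroup of $\Gamma\mathcal{G}$, and then invokes a separate result that such subgroups are virtual retracts. Identifying the finite-index overgroup and the retraction is not done by a single projection but by the rotative-stabiliser machinery analogous to Lemma~\ref{lem:GatedCocompact} of the present paper. Your sketch would need to be rerouted through that argument rather than a direct gated projection.
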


\subsection{Mediangle graphs}\label{section:Mediangle}

\noindent
Our definition of \emph{mediangle graphs} is a variation of the definition of quasi-median graphs as weakly modular graphs with no induced copy of $K_{3,2}$ and $K_4^-$; see Section~\ref{section:QM}. Roughly speaking, we replace in this definition $4$-cycles with convex even cycles. In the definition below, $I(\cdot,\cdot)$ denotes the \emph{interval} between two vertices, i.e.\ the union of all the geodesics connecting the two vertices under consideration. 

\begin{definition}\label{def:Mediangle}
A connected graph $X$ is \emph{mediangle} if the following conditions are satisfied:
\begin{description}
	\item[(Triangle Condition)] For all vertices $o,x,y \in X$ satisfying $d(o,x)=d(o,y)$ and $d(x,y)=1$, there exists a common neighbour $z \in X$ of $x,y$ such that $z \in I(o,x) \cap I(o,y)$.
	\item[(Intersection of Triangles)] $X$ does not contain an induced copy of $K_4^-$.
	\item[(Cycle Condition)] For all vertices $o ,x,y,z \in X$ satisfying $d(o,x)=d(o,y)=d(o,z)-1$ and $d(x,z)=d(y,z)=1$, there exists a convex cycle of even length that contains the edges $[z,x],[z,y]$ and such that the vertex opposite to $z$ belongs to $I(o,x) \cap I(o,y)$.
	\item[(Intersection of Even Cycles)] The intersection between any two convex cycles of even lengths contains at most one edge. 
\end{description}
\end{definition}

\noindent
Examples of mediangle graphs include (quasi-)median graphs and Cayley graphs of Coxeter graphs (e.g.\ the one-skeleton of the regular tiling of the plane by hexagons). We refer to \cite{Mediangle} for more examples.

\paragraph{Hyperplanes.} Similarly to (quasi-)median graphs, there is a notion of \emph{hyperplanes} in mediangle graphs that plays a fundamental role.

\begin{definition}
Given a mediangle graph $X$, a \emph{hyperplane} is an equivalence class of edges with respect to the reflexive-transitive closure of the relation that identifies any two edges that belong to a common $3$-cycle or that are opposite in a convex even cycle. Given a hyperplane $J$, we denote by $X \backslash \backslash J$ the graph with the same vertices as $X$ and whose edges are those of $X$ that do not belong to $J$. The connected components of $X \backslash \backslash J$ are the \emph{sectors delimited by $J$}. The \emph{carrier} $N(J)$ of $J$ is the union of all the convex cycles containing edges in $J$, and the connected components of $N(J) \backslash \backslash J$ are the \emph{fibres}. Two vertices of $X$ are \emph{separated by $J$} if they lie in distinct sectors. Two hyperplanes are \emph{transverse} if they contain two distinct pairs of opposite edges in some convex even cycle. They are \emph{tangent} if they are distinct, not transverse, but not separated by a third hyperplane.
\end{definition}

\begin{figure}
\begin{center}
\includegraphics[width=0.7\linewidth]{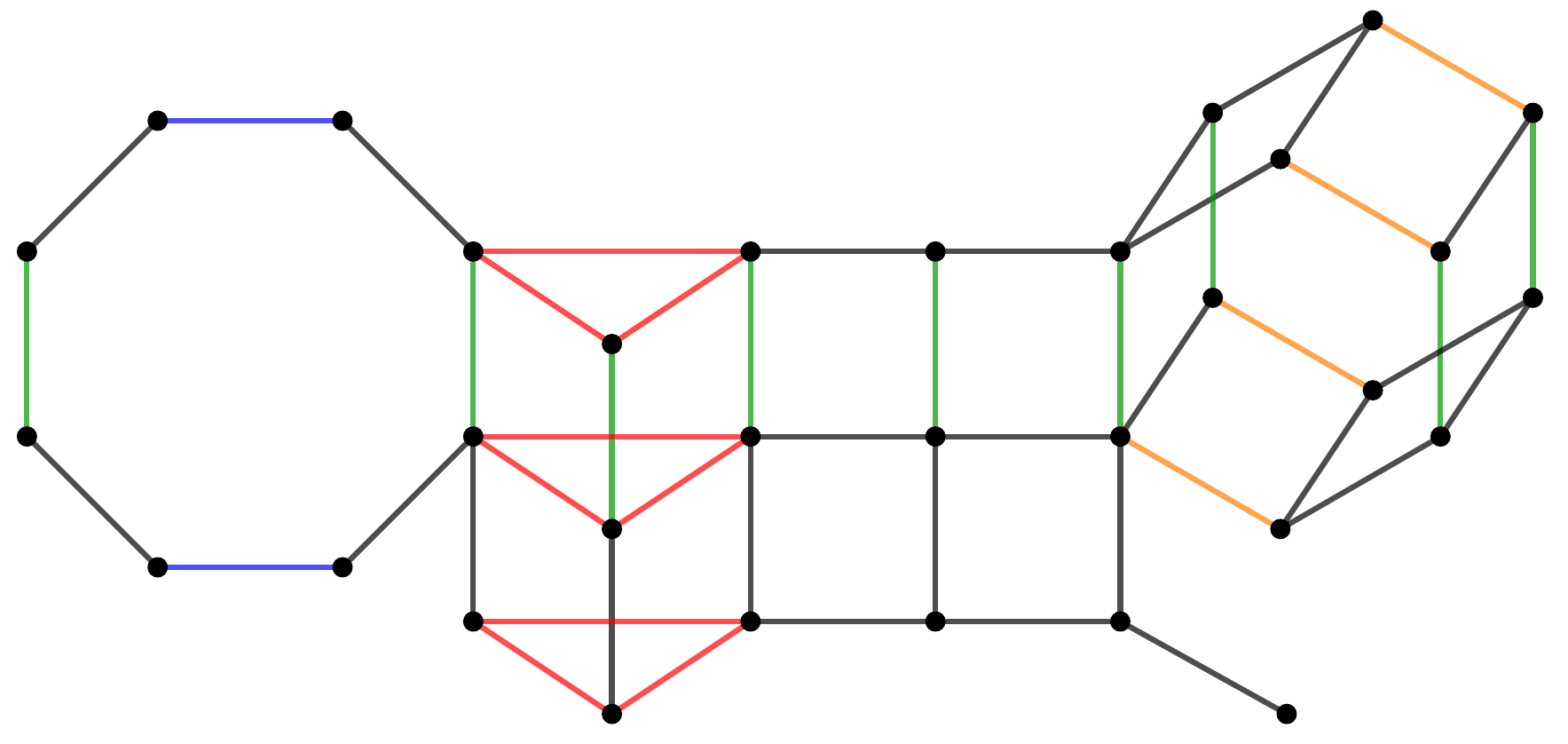}
\caption{A mediangle graph of some of its hyperplanes. The green hyperplane is transverse to the orange, red, and blue hyperplanes. The red and blue hyperplanes are tangent. The orange and red hyperplanes are neither transverse nor tangent.}
\label{MediangleHyp}
\end{center}
\end{figure}

\noindent
See Figure~\ref{MediangleHyp} for an example. The next statement, proved in \cite[Theorem~3.9]{Mediangle}, summarises the basic properties of hyperplanes in mediangle graphs. 

\begin{thm}\label{thm:BigHyp}
Let $X$ be a mediangle graph. The following assertions hold.
\begin{itemize}
	\item[(i)] Hyperplanes separate $X$. More precisely, given a hyperplane $J$ and a clique $C \subset J$, any two vertices in $C$ are separated by $J$ and every sector delimited by $J$ contains a vertex of $C$. 
	\item[(ii)] Sectors delimited by hyperplanes are always convex.
	\item[(iii)] A path is a geodesic if and only if it crosses each hyperplane at most once. As a consequence, the distance between any two vertices coincides with the number of hyperplanes separating them.
\end{itemize}
\end{thm}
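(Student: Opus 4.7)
The plan is to prove assertions (i), (ii) and (iii) simultaneously, since the three are tightly intertwined. The technical heart of the argument is a single claim: \emph{no geodesic of $X$ crosses a given hyperplane twice}. Once this is granted, (iii) is a direct count, (ii) is an immediate consequence, and (i) follows from a supplementary projection argument.

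First I would establish a coherence lemma for hyperplanes: for every hyperplane $J$ and every vertex $o \in X$, the two endpoints of each edge of $J$ lie at distinct distances from $o$, and the ``closer'' endpoint varies consistently along the equivalence relation generating $J$. I would prove this by induction along the two elementary moves of that relation. For the triangle move, if $\{a,b\}$ and $\{a,c\}$ belong to a common triangle, the \textbf{Triangle Condition} together with \textbf{Intersection of Triangles} forces a compatible ordering of $d(o,a), d(o,b), d(o,c)$: whenever two of them coincided, the common neighbour supplied by the triangle condition would conspire with the existing triangle to produce an induced $K_4^-$. For the even-cycle move, convexity of the relevant cycle forces the distance function $d(o,\cdot)$ restricted to it to behave like that of a geodesic bigon, so opposite edges inherit compatible orientations. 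This coherence already implies that the two endpoints of any single edge of $J$ lie in distinct sectors.

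Next comes the main claim. Suppose, for contradiction, that some geodesic $\gamma = (u_0, \ldots, u_n)$ crosses a hyperplane $J$ at two edges $e_1$ and $e_2$, and choose, among all such witnesses in $X$, one minimising the number of edges of $\gamma$ strictly between $e_1$ and $e_2$. Applying the coherence lemma with $o$ the endpoint of $e_1$ farthest from $e_2$, one sees that the two endpoints of $e_2$ are equidistant from $o$; the \textbf{Triangle Condition} then supplies a common neighbour, which the \textbf{Cycle Condition} promotes to a convex even cycle whose edge opposite to $e_2$ lies in $J$ (by definition of the hyperplane equivalence relation). Re-routing $\gamma$ through this cycle yields either a strictly shorter path from $u_0$ to $u_n$, contradicting that $\gamma$ is a geodesic, or a new geodesic with a strictly smaller gap between its two $J$-crossings, contradicting the minimal choice. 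The axiom \textbf{Intersection of Even Cycles} is used to ensure that the local surgery does not spawn a fresh $J$-crossing elsewhere along the re-routed path. Orchestrating all four mediangle axioms to make this surgery behave as required is the principal obstacle of the proof.

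From the main claim, (iii) is immediate: a geodesic crosses exactly the hyperplanes separating its endpoints, each at most once, so its length equals the number of separating hyperplanes, and the converse is a trivial count. Assertion (ii) follows because any two vertices in the same sector of $J$ are joined by a geodesic that cannot cross $J$, hence remains in the sector. For (i), given a clique $C \subset J$, the coherence lemma shows that distinct vertices of $C$ lie in distinct sectors. For the converse, I would build a projection $p : X \to C$ by iteratively replacing any vertex $w$ by a neighbour lying closer to $C$ — made possible by the \textbf{Triangle Condition} applied along the clique $C$ — until landing in $C$; the resulting path from $w$ to $p(w)$ is a geodesic that avoids $J$, so $w$ and $p(w)$ share a sector, which shows that every sector delimited by $J$ meets $C$.
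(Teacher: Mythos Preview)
The paper does not actually prove this theorem; it is quoted from \cite[Theorem~3.9]{Mediangle}. So there is no in-paper argument to compare against, and I can only assess your proposal on its own merits.

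There is a genuine gap, and it lies at the very first step. Your coherence lemma asserts that for every hyperplane $J$ and every basepoint $o$, the two endpoints of each edge of $J$ lie at \emph{distinct} distances from $o$. This is false in mediangle graphs. Take $X=K_3$ with vertices $a,b,c$: this is a quasi-median graph, hence mediangle, with a single hyperplane $J$ containing all three edges. With $o=a$, the edge $\{b,c\}$ has both endpoints at distance $1$ from $o$. Your argument for the triangle move claims that if two of $d(o,a),d(o,b),d(o,c)$ coincide, the common neighbour produced by the Triangle Condition would create an induced $K_4^-$; but in the example that common neighbour is $o$ itself, already a vertex of the triangle, and no $K_4^-$ arises. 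The underlying issue is structural: you are reasoning as if hyperplanes were two-sided walls, as in median graphs, whereas in mediangle graphs a hyperplane delimits one sector per vertex of a clique, and the correct ``orientation'' datum at a clique is its gate with respect to $o$, not a binary near/far label on each edge.

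This error propagates. Your main claim's surgery step says that the coherence lemma forces the endpoints of $e_2$ to be equidistant from the far end of $e_1$, after which you invoke the Triangle Condition; but once the coherence lemma is gone, you no longer have a mechanism guaranteeing this equidistance, and the appeal to the Cycle Condition that follows is left unmoored. Likewise, your argument for (i) that distinct vertices of a clique $C\subset J$ lie in distinct sectors rests on the same false lemma. The overall architecture (no geodesic crosses a hyperplane twice $\Rightarrow$ (iii) $\Rightarrow$ (ii), with (i) handled via a projection to cliques) is the right shape, but the engine you propose for the key claim does not run in the presence of cliques of size $\geq 3$. A workable version would replace your two-sided coherence by the statement that cliques are gated and that the gate map is constant along the hyperplane equivalence, which is how the multi-sector structure is actually controlled.
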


\noindent
We emphasize that, contrary to (quasi-)median graphs, in mediangle graphs sectors may not be gated and carriers may not be convex.

\begin{definition}
Let $G$ be a group acting on a mediangle graph $X$. The \emph{rotative-stabiliser} $\mathrm{stab}_\circlearrowright(J)$ of a hyperplane $J$ is the subgroup of $\mathrm{stab}(J)$ that stabilises every clique in $J$. 
\end{definition}

\noindent
For instance, consider the action of a group $A \times \mathbb{Z}$ on $\mathrm{Cayl}(A \times \mathbb{Z}, A \cup \{1\})$. The Cayley graph is a product between a clique $\mathrm{Cayl}(A,A)$ and a bi-infinite line $\mathrm{Cayl}(\mathbb{Z},\{1\})$. The whole group $A \times \mathbb{Z}$ stabilises the hyperplane $J$ that contains the $\mathrm{Cayl}(A,A)$-factors, but $\mathrm{stab}_\circlearrowright(J)$ reduces to $A \times \{1\}$. 

\medskip \noindent
We record the following easy observation for future use:

\begin{lemma}\label{lem:RotateCloser}
Let $X$ be a mediangle graph and $A \subset X$ a subgraph. Let $x \in X$ be a vertex and $J$ a hyperplane separating $x$ from $A$. If $g \in \mathrm{stab}_{\circlearrowright}(J)$ sends the sector delimited by $J$ containing $x$ to the sector containing $A$, then $d(gx,A)<d(x,A)$. 
\end{lemma}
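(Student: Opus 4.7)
The plan is to exhibit a vertex of $A$ that is closer to $gx$ than the closest vertex of $A$ is to $x$, by following a geodesic from $x$ to $A$, tracking where it crosses $J$, and using the rotative action of $g$ to ``shortcut'' across $J$.

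First I would pick $a \in A$ realising $d(x,A)=d(x,a)$ and fix a geodesic $\gamma$ from $x$ to $a$. Since $J$ separates $x$ from $A$, it separates $x$ from $a$, and by Theorem~\ref{thm:BigHyp}(iii) a geodesic crosses each hyperplane at most once. Therefore $\gamma$ crosses $J$ in a single edge $e=[u,v]$, with $u$ in the sector $S_x$ containing $x$ and $v$ in the sector $S_A$ containing $A$. In particular $d(x,a)=d(x,u)+1+d(v,a)$.

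Next I would show that $gu=v$. Let $C$ be the clique of $J$ containing $e$. By Theorem~\ref{thm:BigHyp}(i), any two vertices of $C$ are separated by $J$ and every sector delimited by $J$ contains a vertex of $C$; hence the vertices of $C$ are in bijection with the sectors of $J$, with $u$ the unique vertex in $S_x$ and $v$ the unique vertex in $S_A$. Since $g \in \mathrm{stab}_\circlearrowright(J)$, it stabilises $C$ setwise, so it permutes its vertices and thereby permutes the sectors accordingly. The hypothesis $g(S_x)=S_A$ then forces $gu=v$.

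To conclude, since $g$ acts by graph automorphisms it is an isometry, so $d(gx,v)=d(gx,gu)=d(x,u)$. Combining with the sub-geodesic of $\gamma$ from $v$ to $a$, the triangle inequality gives
\[
d(gx,A) \leq d(gx,a) \leq d(gx,v)+d(v,a) = d(x,u)+d(v,a) = d(x,a)-1 = d(x,A)-1,
\]
as required. The only genuinely non-trivial step is the identification $gu=v$; everything else is direct bookkeeping using the hyperplane structure. The main point to be careful about is that ``rotative'' is stronger than merely stabilising $J$: it ensures that $g$ respects each individual clique of $J$, which is what produces a clique-level bijection between sectors rather than just a setwise permutation of $J$.
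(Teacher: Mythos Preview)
Your proof is correct and follows essentially the same approach as the paper: choose a nearest point $a\in A$, locate the unique edge where a geodesic from $x$ to $a$ crosses $J$, use the rotative action to identify its endpoints across $J$, and obtain a path from $gx$ to $a$ that is one step shorter. Your justification of $gu=v$ via the clique--sector bijection from Theorem~\ref{thm:BigHyp}(i) is slightly more explicit than the paper's, but the argument is the same.
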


\begin{proof}
Fix a vertex $a \in A$ satisfying $d(x,A)=d(x,a)$ and a geodesic $\gamma$ connecting $a$ to $x$. Necessary, $\gamma$ crosses $J$, so it can be decomposed as a concatenation $\alpha \cup e \cup \beta$ where $e:=[e_1,e_2]$ is an edge that belongs to $J$. Notice that the initial (resp.\ terminal) endpoint of $e$ belongs to the sector delimited by $J$ that contains $A$ (resp.\ $x$), hence $ge_2=e_1$. In other words, the initial point of $g\beta$ must coincide with the terminal point of $\alpha$. As a consequence, the concatenation $\alpha \cup g \beta$ is well-defined, providing a path of length $\mathrm{length}(\gamma)-1$ connecting $a$ to $gx$. Therefore, we have
$$d(A,gx) \leq d(a,gx) < \mathrm{length}(\gamma) = d(a,x)=d(A,x),$$
concluding the proof of our lemma.
\end{proof}

\paragraph{Projections.} Recall from Section~\ref{section:QM} that, given a gated subgraph $Y$, we refer to the map that associates to every vertex its gate in $Y$ as the projection to $Y$. The next statement, proved in \cite[Lemma~3.17 and Corollary~3.18]{Mediangle}, records the interaction between projections and hyperplanes.

\begin{thm}\label{thm:BigProjection}
Let $X$ be a mediangle graph and $Y \subset X$ a gated subgraph. The following assertions hold.
\begin{itemize}
	\item For every $x \in X$, any hyperplane separating $x$ from its projection on $Y$ separates $x$ from $Y$.
	\item For all $x,y \in X$, the hyperplanes separating their projections on $Y$ are exactly the hyperplanes separating $x$ and $y$ that cross $Y$.
\end{itemize}
\end{thm}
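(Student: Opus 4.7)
My plan is to deduce both assertions from Theorem~\ref{thm:BigHyp}(iii) (which says that a geodesic crosses each hyperplane at most once) together with the defining property of a gated subgraph: for every $x \in X$ and every $z \in Y$, some geodesic from $x$ to $z$ passes through the projection of $x$ on $Y$.

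For the first assertion, I would fix $x \in X$, write $x'$ for its projection on $Y$, and let $J$ be a hyperplane separating $x$ and $x'$. Given an arbitrary $z \in Y$, I would invoke the gate property to produce a geodesic from $x$ to $z$ that decomposes as the concatenation of a geodesic segment from $x$ to $x'$ with a geodesic segment from $x'$ to $z$. Since $J$ separates the endpoints of the first segment, that segment crosses $J$, and Theorem~\ref{thm:BigHyp}(iii) then prevents the second segment from also crossing $J$. Thus $x'$ and $z$ lie on the same side of $J$, so $J$ separates $x$ from $z$.

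For the second assertion, writing $x', y'$ for the projections of $x, y$ on $Y$, the key observation I would record is the following contrapositive of the first assertion: if a hyperplane $J$ crosses $Y$, then $J$ cannot separate $x$ from $x'$. Indeed, were this to fail, the first assertion would force all of $Y$ to lie in the sector of $J$ opposite to $x$, contradicting that $J$ has an edge inside $Y$. Symmetrically, such a $J$ cannot separate $y$ from $y'$. Consequently, any hyperplane $J$ crossing $Y$ places $x, x'$ on the same side and $y, y'$ on the same side, so $J$ separates $x, y$ if and only if it separates $x', y'$. Both inclusions of the second assertion then follow, since any hyperplane separating the two vertices $x', y' \in Y$ automatically crosses $Y$.

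I do not anticipate a serious obstacle: the entire argument reduces to routing geodesics through gates and invoking the hyperplane/geodesic dictionary of Theorem~\ref{thm:BigHyp}(iii). The only subtlety worth double-checking is that ``$J$ crosses $Y$'' really entails the existence of vertices of $Y$ on both sides of $J$, so that the contradiction with part~(1) in the second step is genuine; this is built into the hyperplane formalism recorded above.
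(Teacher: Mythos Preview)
Your argument is correct. The paper does not actually prove this theorem; it simply records it as \cite[Lemma~3.17 and Corollary~3.18]{Mediangle}, so there is no in-paper proof to compare against. Your approach---routing geodesics through gates and invoking Theorem~\ref{thm:BigHyp}(iii)---is exactly the standard way such statements are established in the median/quasi-median/mediangle setting, and it goes through without issue here. The one point you flag at the end is indeed the only thing requiring a word of justification: gated subgraphs are convex (hence connected), so ``$J$ crosses $Y$'' in the sense of containing an edge of $Y$ is equivalent, via Theorem~\ref{thm:BigHyp}(i), to $Y$ meeting at least two sectors of $J$; this is what makes the contradiction with part~(1) genuine and also what guarantees that a hyperplane separating $x',y'\in Y$ really does cross $Y$.
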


\noindent
As a direct application, let us notice that:

\begin{lemma}\label{lem:GatedDisjoint}
In a mediangle graph, two disjoint gated subgraphs are separated by some hyperplane.
\end{lemma}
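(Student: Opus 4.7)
The plan is to produce a pair $(x,y)\in Y_1\times Y_2$ that is a \emph{mutual projection}: $x$ is the gate of $y$ in $Y_1$ and $y$ is the gate of $x$ in $Y_2$. Given such a pair with $d(x,y)\geq 1$, Theorem~\ref{thm:BigHyp}(iii) yields a hyperplane $J$ separating $x$ and $y$, and Theorem~\ref{thm:BigProjection} (first bullet) then forces $Y_2$ to lie in the $y$-sector of $J$ and $Y_1$ in the $x$-sector, which is exactly what we want.

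The first step is to establish the existence of such a mutual-projection pair. I would simply pick $(x,y)$ minimising the distance function on $Y_1\times Y_2$, so that $d(x,y)=d(Y_1,Y_2)\geq 1$. Letting $x'$ denote the gate of $y$ in $Y_1$ (which exists because $Y_1$ is gated), the definition of a gate, applied to $x \in Y_1$, gives $d(y,x)=d(y,x')+d(x',x)$. But $d(y,x')\geq d(Y_1,Y_2)=d(y,x)$, so $d(x',x)=0$, i.e.\ $x=x'$. The symmetric computation shows that $y$ is the gate of $x$ in $Y_2$.

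The second step is essentially the core of the lemma and is short: choose any hyperplane $J$ separating $x$ from $y$ (which exists by Theorem~\ref{thm:BigHyp}(iii) since $d(x,y)\geq 1$), and apply Theorem~\ref{thm:BigProjection} once with the vertex $x$, the gated subgraph $Y_2$, and the gate $y$, and once with $y$, $Y_1$, and $x$. The first application says $J$ separates $x$ from \emph{all} of $Y_2$, i.e.\ $Y_2$ lies entirely on the $y$-side of $J$; the second says $Y_1$ lies entirely on the $x$-side.

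I do not expect any real obstacle: the argument uses only the uniqueness of gates (which follows immediately from the definition, since if $y_1,y_2$ are both gates of $x$ in $Y$ then the definition yields $d(x,y_i)=d(x,y_j)+d(y_j,y_i)$ in both directions, forcing $y_1=y_2$) together with the first bullet of Theorem~\ref{thm:BigProjection}. The only mildly subtle point is recognising that a distance-minimising pair automatically realises the two gates, but this is routine once unwound.
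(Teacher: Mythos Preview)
Your proof is correct and follows essentially the same approach as the paper: both pick a distance-minimising pair, observe that each vertex is the gate of the other, and then apply Theorem~\ref{thm:BigProjection} (first bullet) twice to conclude that any hyperplane separating the pair separates the two subgraphs.
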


\begin{proof}
Let $X$ be a mediangle graph and $Y,Z \subset X$ two disjoint gated subgraphs. Fix two vertices $y \in Y$ and $z \in Z$ minimising the distance between $Y$ and $Z$. Necessarily, $z$ is the projection of $y$ to $Z$ and vice-versa. Applying Theorem~\ref{thm:BigProjection} twice, we deduce that the hyperplanes separating $y$ and $z$ coincide with the hyperplanes separating $Y$ and $Z$. But, since $Y$ and $Z$ are disjoint, $d(y,z) \geq 1$; so there must be at least one hyperplane separating $y$ and $z$. 
\end{proof}

\paragraph{Angles between hyperplanes.} A key difference between (quasi-)median graphs and mediangle graphs is that hyperplanes, in mediangle graphs, can cross in different ways. More precisely, it is possible to define an angle between two transverse hyperplanes.

\begin{definition}
In a mediangle graph, if two transverse hyperplanes $J_1,J_2$ both cross a convex even cycle $C$, the \emph{angle between $J_1,J_2$ at $C$} is
$$\measuredangle_C(J_1,J_2): = 2 \pi \cdot \frac{1+d( J_1 \cap C, J_2 \cap C)}{\mathrm{length}(C)}.$$
\end{definition}

\noindent
This number coincides with the geometric angle between the straight lines connecting the midpoints of the two pairs of opposite edges in $J_1,J_2$ when $C$ is thought of as a regular Euclidean polygon. The key observation is that this angle does not depend on the cycle $C$ under consideration \cite[Proposition~3.20]{Mediangle}, allowing us to define the \emph{angle} $\measuredangle(J_1,J_2)$ as the angle at an arbitrary convex even cycle they both cross.

\medskip \noindent
For instance, on Figure~\ref{MediangleHyp} above, the green hyperplane is transverse to the red and blue hyperplanes with angle $\pi/2$ in both cases, and it is transverse to the orange hyperplane with angle $\pi/3$.

\begin{prop}\label{prop:SpanningACycle}
Let $X$ be a mediangle graph. Let $o \in X$ be a vertex and $a,b \in X$ two neighbours. If the hyperplanes $A$ and $B$ containing respectively the edges $[o,a]$ and $[o,b]$ are transverse, then $o,a,b$ span a convex $2n$-cycle where $n:=\pi/ \measuredangle(A,B)$. 
\end{prop}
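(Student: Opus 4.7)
The plan is to invoke the Cycle Condition of Definition~\ref{def:Mediangle} with $z = o$, $x = a$, $y = b$, which will produce a convex even cycle through $[o,a]$ and $[o,b]$ as soon as I find an auxiliary vertex $o_{\ast}$ satisfying $d(o_{\ast},a) = d(o_{\ast},b) = d(o_{\ast},o) - 1$. Once such a cycle $C^{\ast}$ is in hand, the length is forced by the intrinsic angle formula, because $[o,a]$ and $[o,b]$ share the vertex $o$.

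To build $o_{\ast}$, I would exploit transversality. Let $A^{+}, B^{+}$ denote the sectors of $A, B$ that contain $o$, and $A^{-}, B^{-}$ the sectors on the other side (so $a \in A^{-}$ and $b \in B^{-}$). By definition, transversality of $A$ and $B$ yields a convex even cycle $C_{0}$ in which $A$ and $B$ contribute two distinct pairs of opposite edges. These four edges alternate around $C_{0}$ (the two edges of any pair being diametrically opposite) and they partition the vertex set of $C_{0}$ into four arcs, one lying in each of the regions $A^{\pm} \cap B^{\pm}$. I would then set $o_{\ast}$ to be any vertex of $C_{0}$ lying in $A^{-} \cap B^{-}$.

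To verify the required distance equalities, I would count separating hyperplanes using Theorem~\ref{thm:BigHyp}(iii). Because $[o,a]$ is a single edge, $A$ is the only hyperplane separating $o$ from $a$, so every hyperplane $H \neq A$ has $o$ and $a$ on the same side, and therefore separates $o_{\ast}$ from $o$ if and only if it separates $o_{\ast}$ from $a$. The hyperplane $A$ itself separates $o_{\ast}$ from $o$ but not from $a$, so the total count gives $d(o_{\ast},o) = d(o_{\ast},a) + 1$; the same argument with $B$ gives $d(o_{\ast},o) = d(o_{\ast},b) + 1$. Since both $A$ and $B$ separate $o_{\ast}$ from $o$, we further have $d(o_{\ast},o) \geq 2$, so the Cycle Condition applies and delivers a convex even cycle $C^{\ast}$ containing $[o,a]$ and $[o,b]$.

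It remains to compute $\mathrm{length}(C^{\ast})$. Since $[o,a] \in A \cap C^{\ast}$ and $[o,b] \in B \cap C^{\ast}$ meet at $o$, the distance in $C^{\ast}$ between these two edge-sets is zero, so the defining formula gives $\measuredangle_{C^{\ast}}(A,B) = 2\pi/\mathrm{length}(C^{\ast})$. Cycle-invariance of the angle (\cite[Proposition~3.20]{Mediangle}) then forces $\mathrm{length}(C^{\ast}) = 2\pi/\measuredangle(A,B) = 2n$, as desired. The one step where I expect to have to argue carefully is the combinatorial claim that the two pairs of opposite edges on $C_{0}$ really do alternate, so that all four regions $A^{\pm} \cap B^{\pm}$ meet $C_{0}$; everything else is bookkeeping on top of the definitions.
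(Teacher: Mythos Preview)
Your reduction to the Cycle Condition via a vertex $o_\ast$ and direct hyperplane counting is clean and correct \emph{once such an $o_\ast\in A^-\cap B^-$ exists}; the distance equalities and the final length computation via the angle formula are both fine. The gap is in producing $o_\ast$ as a vertex of $C_0$. You implicitly treat $A$ and $B$ as two-sided, so that the two arcs of $C_0$ cut out by the $A$-edges automatically lie in $A^+$ and $A^-$. When $C_0$ has length $>4$ this is justified: any hyperplane crossing a convex even cycle of length $>4$ delimits exactly two sectors (such a cycle is gated, and a third vertex in the clique through one of its edges would force a triangle inside the cycle). But when the witnessing cycle is a $4$-cycle, $A$ and $B$ may delimit three or more sectors, and then the two sectors of $A$ visited by $C_0$ need not be $A^+$ or $A^-$. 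Concretely, take $X=K_3\times K_3$, set $o=(0,0)$, $a=(1,0)$, $b=(0,1)$, and let $C_0$ be the $4$-cycle on $(0,0),(2,0),(2,2),(0,2)$: this witnesses transversality of $A$ and $B$ yet contains no vertex of $A^-=\{1\}\times K_3$. So ``alternation'' is not the delicate step (opposite pairs of edges in an even cycle always alternate); the real obstruction is that $C_0$ may sit in the wrong sectors.

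The paper's proof splits precisely along this line. For $|C|>4$ it proceeds much as you do, locating the analogue of your $o_\ast$ via projections to the gated cycle $C$. For $|C|=4$ it invokes Lemma~\ref{lem:GatedHullSquare}: the gated hull of a convex $4$-cycle is a product of two cliques, crossed exactly by $A$ and $B$, and that product contains a vertex in \emph{every} combination of sectors of $A$ and $B$, in particular in $A^-\cap B^-$. With that vertex in hand your hyperplane-counting argument goes through verbatim (and is arguably slicker than the paper's projection bookkeeping). So what you are missing is exactly this lemma, or some other device guaranteeing $A^-\cap B^-\neq\emptyset$ in the right-angled multi-sector case.
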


\noindent
In order to prove the proposition, we start by proving the following assertion:

\begin{lemma}\label{lem:GatedHullSquare}
Let $X$ be a mediangle graph and $C$ a convex $4$-cycle. The gated hull of $C$ is a product of cliques. Moreover, $C$ and its gated hull are crossed by exactly the same hyperplanes.
\end{lemma}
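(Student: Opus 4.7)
Label the vertices of $C$ cyclically as $o, a, c, b$ so that $[o,a], [b,c]$ belong to a hyperplane $A$ and $[o,b], [a,c]$ to a hyperplane $B$; the angle formula gives $\measuredangle(A,B) = 2\pi \cdot (1+0)/4 = \pi/2$. Let $K_A$ (resp.\ $K_B$) be the clique consisting of $o$ and its $A$- (resp.\ $B$-) neighbours. For every $(x,y) \in K_A \times K_B$, Proposition~\ref{prop:SpanningACycle} produces a convex $4$-cycle through $o, x, y$ (degenerate when $x = o$ or $y = o$); the Intersection of Even Cycles axiom forces uniqueness, since two such cycles would share the two adjacent edges $[o,x]$ and $[o,y]$. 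Denote the fourth vertex by $v(x,y)$, with the conventions $v(x,o) := x$ and $v(o,y) := y$. This $4$-cycle places $v(x,y)$ in the $A$-sector of $x$ and the $B$-sector of $y$; since Theorem~\ref{thm:BigHyp}(i) puts distinct vertices of a clique in distinct sectors of its hyperplane, the map $(x,y) \mapsto v(x,y)$ is injective.

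Let $H$ be the induced subgraph on the image. The plan is to show that $H$ is the product of cliques $K_{|K_A|} \square K_{|K_B|}$. For fixed $y$, the edges $[y, v(x,y)]$ with $x \in K_A \setminus \{o\}$ all belong to $A$, being opposite to $[o,x] \in A$ in the convex $4$-cycle through $o,x,y$; hence the row $\{v(x,y) : x \in K_A\}$ sits inside a common clique of $A$, and symmetrically for columns. A hypothetical diagonal edge $v(x_1,y_1) \sim v(x_2,y_2)$ with $x_1 \neq x_2$ and $y_1 \neq y_2$ would belong to a single hyperplane, while its endpoints lie on opposite sides of both $A$ and $B$, which is impossible. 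Hence every edge of $H$ lies in $A \cup B$, so $A$ and $B$ are the only hyperplanes crossing $H$, and $H$ sits in a single sector of every other hyperplane (being connected). To see that $H$ is gated, one attaches to each $w \in X$ the candidate $\gamma(w) := v(p_A(w), p_B(w))$, where $p_A(w) \in K_A$ and $p_B(w) \in K_B$ are the unique representatives of the $A$- and $B$-sectors of $w$ (Theorem~\ref{thm:BigHyp}(i)). Counting hyperplanes via Theorem~\ref{thm:BigHyp}(iii), for every $h = v(x,y) \in H$,
$$d(w, h) = d(w, \gamma(w)) + [p_A(w) \neq x] + [p_B(w) \neq y] = d(w, \gamma(w)) + d(\gamma(w), h),$$
so $\gamma(w)$ is a gate of $w$ in $H$.

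It remains to identify $H$ with the gated hull of $C$. Clearly $H$ is gated and contains $C = \{v(o,o), v(a,o), v(o,b), v(a,b)\}$. Conversely, any gated subgraph $H' \supseteq C$ contains $K_A$ and $K_B$, since in a mediangle graph the clique containing an edge is the gated hull of that edge (a standard fact from \cite{Mediangle}); then, by convexity of $H'$, for each $(x,y) \in K_A \times K_B$ the interval $I(x,y)$ contains the fourth vertex $v(x,y)$, so $v(x,y) \in H'$. Therefore $H$ is the gated hull of $C$, and the hyperplane statement follows: both $C$ and $H$ are crossed by exactly the two hyperplanes $A$ and $B$. The main obstacle is the gatedness verification, which hinges on the observation that edges of $H$ lie only in $A \cup B$, so that the hyperplane-counting formula of Theorem~\ref{thm:BigHyp}(iii) decomposes cleanly.
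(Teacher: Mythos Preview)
Your argument has a circularity problem: you invoke Proposition~\ref{prop:SpanningACycle} to produce the convex $4$-cycle through $o,x,y$, but in the paper's logical order Lemma~\ref{lem:GatedHullSquare} is proved precisely as a prerequisite for Proposition~\ref{prop:SpanningACycle}. The proof of that proposition splits according to whether the transverse hyperplanes meet in a cycle of length $>4$ or in a $4$-cycle; in the latter case---which is exactly your situation, since $\measuredangle(A,B)=\pi/2$---the argument appeals to the gated hull of a $4$-cycle being a product of cliques, i.e.\ to the present lemma. So your very first construction step rests on the statement you are trying to establish.

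The paper avoids this by building the required $4$-cycles by hand: given $a'$ in the clique $P$ containing $[o,a]$, one projects $a'$ onto the parallel clique $P'$ (cliques being gated) to obtain a vertex $b'$ adjacent to $b$, and a short hyperplane count shows $(o,a',b',b)$ is a $4$-cycle. This uses only gatedness of cliques and Theorem~\ref{thm:BigHyp}, not the proposition. If you replace your appeal to Proposition~\ref{prop:SpanningACycle} by this direct construction, the remainder of your outline is close in spirit to the paper's argument. One further step that deserves a line of justification is the claim that each row $\{v(x,y):x\in K_A\}$ forms a clique: knowing that the edges $[y,v(x,y)]$ all lie in $A$ does not immediately give pairwise adjacency of the $v(x,y)$; you need to observe that any two of them are at distance~$\leq 2$ via $y$ and are separated only by $A$, hence are in fact adjacent.
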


\begin{proof}
Fix a vertex $o \in C$ and let $a,b \in C$ denote its two neighbours. Let $P$ (resp.\ $Q$) denote the clique containing the edge $[o,a]$ (resp.\ $[o,b]$) and let $A$ (resp.\ $B$) denote the hyperplane containing $P$ (resp.\ $Q$). We begin by proving the following assertion, in which $\mathcal{H}(\cdot,\cdot)$ denotes the set of the hyperplanes separating the two vertices under consideration.

\begin{claim}\label{claim:ConstructingPrism}
For all sectors $R,S$ respectively delimited by $A,B$, there exists a unique vertex $p(R,S) \in R \cap S$ such that $\mathcal{H}(o,p) \subset \{A,B\}$.
\end{claim}

\noindent
For the uniqueness, assume that $p,q \in R \cap S$ are two vertices satisfying $\mathcal{H}(o,p) , \mathcal{H}(o,q) \subset \{A,B\}$. Since
$$\mathcal{H}(p,q) \subset \mathcal{H}(o,p) \triangle \mathcal{H}(o,q) \subset \{A,B\},$$
we know that $A$ and $B$ are the only hyperplanes that may separate $p$ and $q$. But these two vertices belong the same sectors delimited by $A$ and $B$, so neither $A$ nor $B$ can separate $p$ and $q$. Thus, no hyperplane separates $p$ and $q$, which amounts to saying that $p=q$. 

\medskip \noindent
For the existence, let us show that, for every vertex $a'  \in P \backslash \{ o \}$, the edges $[o,a']$ and $[o,b]$ span a $4$-cycle. Let $P'$ denote the clique containing the edge of $C$ opposite to $[o,a]$ and let $b'$ denote the projection of $a'$ on $C'$. Clearly, $b'$ is adjacent to $b$. The path $[a',o] \cup [o,b] \cup [b,b']$ crosses successively the hyperplanes $A$, $B$, and $A$ again. It follows that $a'$ and $b'$ are separated by $B$ and possibly by $A$. But $a'$ and $b'$ have the same projection to the clique $C'$ from $A$ (namely, $b'$ itself), so they cannot be separated by $A$. Therefore, $a'$ and $b'$ are separated by exactly one hyperplane, namely $B$. We conclude that $(o,a',b',b)$ is the $4$-cycle we are looking for. 

\medskip \noindent
By repeating the same argument with $a'$ in place of $b$ and an arbitrary vertex of $Q \backslash \{o\}$ in place of $a'$, we conclude that, for all vertices $a' \in P \backslash \{o\}$ and $b' \in Q \backslash \{o\}$, the edges $[o,a']$ and $[o,b']$ span a $4$-cycle. This proves Claim~\ref{claim:ConstructingPrism} for the sectors delimited by $A$ and $B$ that do not contain $o$. Indeed, if $R_o$ (resp.\ $S_o$) denotes the sector delimited by $A$ (resp.\ $B$) that contains $o$, then, for all sectors $R \neq R_o$ and $S \neq S_o$ delimited by $A$ and $B$, the fourth vertex of the $4$-cycle spanned by the edges $[o,a']$ and $[o,b']$, where $a'$ (resp.\ $b'$) is the unique vertex of $P \cap R$ (resp.\ $Q \cap S$), is separated from $o$ by the hyperplanes $A$ and $B$ exactly. Finally, notice that $a' \in R \cap S_o$ is separated from $o$ by $A$ exactly; that $b' \in R_o \cap S$ is separated from $o$ by $B$ exactly; and that $o \in R_o \cap S_o$ is separated by $o$ by no hyperplane. This concludes the proof of Claim~\ref{claim:ConstructingPrism}.

\medskip \noindent
Now, set
$$K:= \{ p(R,S) \mid \text{$R$ and $S$ sectors delimited by $A$ and $B$}\}$$
and let us prove that $K$ decomposes as the product of $P$ and $Q$. More precisely, we claim that:

\begin{claim}\label{claim:ThisIsPrism}
The map $P \times Q \to K$ defined by $(x,y) \mapsto p(R_x,S_y)$ induces a graph isomorphism, where $R_x$ (resp.\ $S_y$) denotes the sector delimited by $A$ (resp.\ $B$) containing $x$ (resp.\ $y$). 
\end{claim}

\noindent
Let $x,x' \in P$ and $y,y' \in Q$ be four vertices. Since
$$\mathcal{H}(p(R_x,S_y),p(R_{x'},S_{y'})) \subset \mathcal{H}(o,p(R_x,S_y)) \triangle \mathcal{H}(o, p(R_{x'},S_{y'})) \subset \{A,B\},$$
only the hyperplanes $A$ and $B$ may separate $p(R_x,S_y)$ and $p(R_{x'},S_{y'})$. It follows that
$$\mathcal{H}(p(R_x,S_y), p(R_{x'},S_{y'})) = \left\{ \begin{array}{cl} \{A,B\} & \text{if $x \neq x'$ and $y \neq y'$} \\ \{A\} & \text{if $x\neq x'$ and $y=y'$} \\ \{B\} & \text{if $x=x'$ and $y \neq y'$} \\ \emptyset & \text{if $x=x'$ and $y=y'$} \end{array} \right.,$$
from which we deduce that
$$d(p(R_x,S_y),p(R_{x'},S_{y'})) = d(x,x')+d(y,y') = d((x,x'),(y,y')).$$
This concludes the proof of Claim~\ref{claim:ThisIsPrism}.

\medskip \noindent
So far, we have constructed a product of cliques containing $C$, namely $K$. Clearly, $A$ and $B$ are the only hyperplanes crossing $K$, i.e.\ $C$ and $K$ are crossed by exactly the same hyperplanes. It remains to show that $K$ is the gated hull of $C$. The fact that $K$ is contained in the gated hull of $C$ is rather clear. Indeed, because a clique containing at least one edge in a gated subgraph must be contained entirely in this subgraph, we know that $P$ and $Q$ are contained in the gated hull of $C$. Next, we know that a $4$-cycle having two consecutive edges in a gated subgraph must be contained entirely in this subgraph. But every vertex of $K$ not in $P \cup Q$ is the fourth vertex of a $4$-cycle spanned by two edges in $P \cup Q$. (This is clear by identifying $K$ with $P \times Q$, as justified by Claim~\ref{claim:ThisIsPrism}.) Therefore, in order to conclude the proof of our lemma, it suffices to show that:

\begin{claim}\label{claim:KisGated}
$K$ is gated.
\end{claim}

\noindent
Let $x \in X$ be an arbitrary vertex. Let $R$ (resp.\ $S$) denote the sector delimited by $A$ (resp.\ $B$) containing $x$. Let us verify that $p:=p(R,S)$ is a gate for $x$. Fix an arbitrary vertex $q \in K$. A hyperplane separating $p$ and $q$ must cross $K$, hence $\mathcal{H}(p,q) \subset \{A,B\}$. On the other hand, since $x$ and $p$ belong to the same sector delimited by $A$ and $B$, necessarily neither $A$ nor $B$ can separate $x$ and $p$. In other words, $\mathcal{H}(p,q) \cap \mathcal{H}(x,p)= \emptyset$. This implies that $d(x,q)=d(x,p)+d(p,q)$, as desired. This concludes the proof of Claim~\ref{claim:KisGated}.
\end{proof}

\begin{proof}[Proof of Proposition~\ref{prop:SpanningACycle}.]
Because $A$ and $B$ are transverse, there exists a convex even cycle $C$ that contains two pairs of opposite edges in $A$ and $B$. We distinguish two cases depending on the perimeter of $C$.

\medskip \noindent
First, we assume that $C$ has perimeter $>4$. According to \cite[Theorem~3.5]{Mediangle}, $C$ is gated, so we can consider the projections $a'$ and $b'$ of $a$ and $b$ to $C$. According to Theorem~\ref{thm:BigProjection}, the hyperplanes separating $a'$ and $b'$ are exactly the hyperplanes separating $a$ and $b$ that cross $C$. It follows that $A$ and $B$ are the only hyperplanes separating $a'$ and $b'$, hence $d(a',b')=2$. Let $o'$ denote the unique common neighbour of $a'$ and $b'$. By convexity of $C$, necessarily $o' \in C$. Let $o'' \in C$ denote the vertex opposite to $o'$. 

\medskip \noindent
Notice that the hyperplanes separating $a$ and $a'$ coincide with the hyperplanes separating $b$ and $b'$. Indeed, a hyperplane separating $a$ from $a'$ has to separate $a$ from $C$, so it has to separate $b$ from $b'$ or $a$ from $b$. But the latter case is not possible since the hyperplanes separating $a$ and $b$, namely $A$ and $B$, cross $C$. Thus, every hyperplane separating $a$ from $a'$ separates $b$ from $b'$, and the converse holds by symmetry. We conclude that $d(a,a')=d(b,b')$.

\medskip \noindent
On the other hand, because $a'$ is the projection of $a$ to $C$, we have
$$d(a,o'')=d(a,a')+d(a',o'')= d(a,a') + |C|/2-1.$$
Similarly, $d(b,o'')= d(b,b')+ |C|/2-1$. So our previous observation implies that $d(a,o'')=d(b,o'')$.

\medskip \noindent
Also, notice that $d(o,o'')=d(a,o'')+1$. Indeed, the path obtained by concatenating the edge $[o,a]$ with a geodesic between $a$ and $o''$ does not cross a hyperplane twice. Otherwise, such a hyperplane would have to be $A$, but it does not separate $a$ from $o''$. Thus, we are in position to apply the Cycle Condition to $o'',a,b,o$ and to deduce that the edge $[o,a], [o,b]$ span a convex even cycle, say $Q$. Because
$$\frac{2\pi}{|Q|} = \measuredangle_Q(A,B)= \measuredangle (A,B)= \frac{\pi}{n},$$
we conclude that $Q$ is a $2n$-cycle, as desired.

\medskip \noindent
Next, assume that $C$ is a $4$-cycle. Our strategy will be essentially the same, but we have to adapt the execution since $4$-cycles may not be gated. Let $C^+$ denote the gated hull of $C$. According to Lemma~\ref{lem:GatedHullSquare}, $C^+$ is a product of two cliques contained in $A$ and $B$. Let $o',a',b'$ denote the projections of $o,a,b$ on $C^+$. According to Theorem~\ref{thm:BigProjection}, the hyperplanes separating $o'$ and $a'$ coincide with the hyperplanes separating $o$ and $a$ that cross $C^+$. Therefore, $A$ is the only hyperplane separating $o'$ and $a'$, proving that $o'$ and $a'$ are adjacent. Similarly, $B$ is the only hyperplane separating $o'$ from $b'$ and $o'$ is adjacent to $b'$. Since $C^+$ is a product of two cliques, we know that the two edges $[o',a']$ and $[o',b']$ span a $4$-cycle in $C^+$. Let $o''$ denote the the fourt vertex of this cycle. It is separated from $a'$ by $B$, from $b'$ by $A$, and from $o'$ by $A,B$. 

\medskip \noindent
Notice that $\mathcal{H}(a,a')= \mathcal{H}(b,b')= \mathcal{H}(o,o')$, where $\mathcal{H}(\cdot,\cdot)$ denote the set of the hyperplanes separating two given vertices. Indeed, if a hyperplane separating $a$ from $a'$, then it has to separate $a$ from $C^+$. In particular, it has to be distinct from $A$ and $B$, which implies that it cannot separate $a$ from $o$ or $b$ nor $a'$ from $o'$ or $b'$. Necessarily, it has to separate $o$ from $o'$ and $b$ from $b'$. The other inclusions follow by symmetry. Let $\mathcal{H}$ denote this common set of hyperplanes. We have
$$\begin{array}{l} d(o'',a)= \# \mathcal{H}(o'',a) = \# (\mathcal{H} \cup \{B\}) = 1+ \# \mathcal{H} \\
d(o'',b)= \# \mathcal{H}(o'',b) = \# (\mathcal{H} \cup \{A\}) = 1+ \# \mathcal{H} \\
d(o'',o)= \# \mathcal{H}(o'',o)= \# ( \mathcal{H} \cup \{A,B\})= 2+ \# \mathcal{H} \end{array}$$
Therefore, we can apply the Cycle Condition in order to deduce that the edges $[o,a]$ and $[o,b]$ span a convex even cycle, say $Q$. Because
$$\frac{2\pi}{|Q|} = \measuredangle_Q(A,B)= \measuredangle_C (A,B)= \frac{\pi}{2},$$
we conclude that $Q$ is a $4$-cycle, as desired.
\end{proof}

\paragraph{Right hyperplanes.} Given two transverse hyperplanes in a mediangle graph, there is a true difference depending on whether the angle between the two hyperplanes is $\pi/2$. Usually, having right angles is simpler. 

\begin{definition}
In a mediangle graph, a hyperplane $J$ is \emph{right} if $\measuredangle(J,H) = \pi/2$ for every hyperplane $H$ transverse to $J$.
\end{definition}

\noindent
We conclude this section with a few observations about right hyperplanes.

\begin{lemma}\label{lem:NotRight}
In a mediangle graph, a hyperplane that is not right delimits exactly two sectors. 
\end{lemma}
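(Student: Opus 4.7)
My plan is to reduce the claim to one about clique sizes. By Theorem~\ref{thm:BigHyp}(i), any clique $K \subset J$ has its vertices in pairwise distinct sectors of $J$ and visits every sector, so $|K|$ equals the number of sectors delimited by $J$; in particular, every clique contained in $J$ has the same cardinality. It therefore suffices to show that if $J$ is not right, then every clique in $J$ has exactly two vertices.

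Unpacking the hypothesis, $J$ not right provides a hyperplane $H$ transverse to $J$ with $\measuredangle(J,H) \neq \pi/2$. From the angle formula, a convex $4$-cycle crossed by two hyperplanes always yields angle $\pi/2$, and since $\measuredangle_C$ is independent of the cycle, every common convex even cycle has length $\geq 6$. I would pick such a cycle $C$ and an edge $[a_1,a_2] \in J \cap C$ and then argue by contradiction: suppose the clique in $J$ containing $[a_1,a_2]$ has a third vertex $a_3$, so that $\{a_1,a_2,a_3\}$ is a triangle whose three edges all lie in $J$. A short check using convexity of $C$ (the neighbour of $a_1$ in $C$ distinct from $a_2$ sits on an edge belonging to a hyperplane different from $J$, since $J$ crosses $C$ only along $[a_1,a_2]$ and its opposite edge) forces $a_3 \notin C$.

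The conclusion would come from gatedness: since $|C| > 4$, the result \cite[Theorem~3.5]{Mediangle} invoked in the proof of Proposition~\ref{prop:SpanningACycle} ensures that $C$ is gated, so $a_3$ admits a gate $c \in C$. Writing the gate identity for $c' = a_1$ and $c' = a_2$ and combining with $d(a_3,a_1) = d(a_3,a_2) = 1$ pins down $d(a_3,c) \in \{0,1\}$: the first case gives $c = a_3 \notin C$, the second gives $c = a_1 = a_2$, both impossible. Hence no such $a_3$ exists, cliques in $J$ have exactly two vertices, and Theorem~\ref{thm:BigHyp}(i) yields exactly two sectors.

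The most delicate point is the opening translation of ``not right'' into the existence of a common convex even cycle of length at least six, which relies on the rigidity of angles along convex even cycles; once that is in place, the uniform clique size inside a hyperplane together with the gatedness of long convex cycles combine quickly to deliver the contradiction.
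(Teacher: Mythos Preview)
Your proof is correct and follows essentially the same approach as the paper: both arguments pass from ``not right'' to a convex even cycle $C$ of length $>4$ crossed by $J$, invoke gatedness of $C$ from \cite[Theorem~3.5]{Mediangle}, and derive a contradiction from the existence of a third clique vertex adjacent to an edge of $J\cap C$. Your version spells out the gate computation explicitly and prefaces it with the observation (via Theorem~\ref{thm:BigHyp}(i)) that all cliques in $J$ share the same cardinality, whereas the paper leaves the gatedness contradiction implicit; but the underlying idea is identical.
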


\begin{proof}
Let $X$ be a mediangle graph and $J$ a hyperplane. If $J$ is not right, then there exists some hyperplane $H$ transverse to $J$ such that $\measuredangle(J,H) \neq \pi/2$. This implies that $J$ crosses a convex even cycle $C$ of length $>4$. If $J$ delimits more than two sectors, then an edge $e$ from $J \cap C$ must belong to a clique (contained in $J$) of size $\geq 3$. In particular, there must be a $3$-cycle in $J$ containing $e$. But this contradicts the fact that $C$, as any convex even cycle of length $>4$, is gated \cite[Theorem~3.5]{Mediangle}.
\end{proof}

\begin{lemma}\label{lem:RightAngledCommutation}
Let $X$ be a mediangle graph and $J,H$ two transverse hyperplanes. If $\measuredangle (J,H)=\pi/2$, then every isometry in $\mathrm{stab}_{\circlearrowright}(J)$ stabilises $H$.
\end{lemma}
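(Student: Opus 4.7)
The plan is to identify a single edge of $H$ that $g$ maps into $H$; since hyperplanes are equivalence classes of edges, this will force $gH \cap H \neq \emptyset$ and hence $gH = H$.

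The setup I would aim for is a convex $4$-cycle $(o,a,b,c)$ with $[o,a],[b,c] \in J$ and $[o,c],[a,b] \in H$. Such a cycle is produced by Proposition~\ref{prop:SpanningACycle} whenever there is a common vertex $o$ carrying both a $J$-edge and an $H$-edge, since the right-angle assumption forces the spanned $2n$-cycle to have $n = 2$. A common vertex of this type is available because $N(J) \cap N(H) \neq \emptyset$ whenever $J$ and $H$ are transverse (the vertices of any convex even cycle they both cross belong to both carriers).

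Let $C_1$ and $C_2$ denote the cliques of $J$ containing $[o,a]$ and $[b,c]$, respectively. The crucial observation is that every clique of $J$ lies entirely in a single $H$-sector: its edges all belong to $J$, so they are not in $H$ and cannot cross $H$. Thus $C_1$ sits in the $H$-sector $S_o$ of $o$ and $C_2$ in the $H$-sector $S_c$ of $c$, with $S_o \neq S_c$ since $H$ separates $o$ from $c$.

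For the conclusion, let $g \in \mathrm{stab}_{\circlearrowright}(J)$. Since $g(C_1) = C_1$ and $g(C_2) = C_2$, we get $g(o) \in C_1 \subseteq S_o$ and $g(c) \in C_2 \subseteq S_c$. As $g$ is an isometry, $[g(o),g(c)]$ is an edge, and it connects the distinct $H$-sectors $S_o$ and $S_c$. By Theorem~\ref{thm:BigHyp}(iii), adjacent vertices are separated by exactly one hyperplane---the one containing the edge between them---so $[g(o),g(c)] \in H$. Hence $g[o,c] \in H \cap gH$, forcing $gH = H$.

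The main obstacle I anticipate lies in producing the $4$-cycle in the first step: two hyperplanes at angle $\pi/2$ need not share a $4$-cycle, since they might meet only in a longer convex even cycle (for instance an $8$-cycle in which the $J$- and $H$-edges are combinatorially separated), and in such configurations no vertex carries edges in both hyperplanes. A fully general argument would then need to work directly on a shared convex cycle $C$ of length $4(1+d)$, exploiting the right-angle condition which characterises $H \cap C$ as the set of edges of $C$ at a fixed combinatorial distance from $J \cap C$---a characterisation that should be preserved by any isometry preserving $C$ and $J \cap C$.
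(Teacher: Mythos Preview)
Your argument is essentially the paper's: locate a $4$-cycle with opposite edge-pairs in $J$ and $H$, let $C_1,C_2$ be the two $J$-cliques on it, use that $g$ fixes each $C_i$ setwise, and conclude that the image of the $H$-edge $[o,c]$ is again an edge of $H$. The only difference is the final step: the paper invokes an external lemma stating that $C_1\cup C_2$ is a product of a clique with an edge, while your sector argument (each $J$-clique lies in a single $H$-sector, and an edge joining distinct $H$-sectors must belong to $H$) is self-contained and arguably cleaner.

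Your worry about producing the $4$-cycle is justified, and the paper does no better: it simply asserts ``because $\measuredangle(J,H)=\pi/2$, we can find a $4$-cycle'' without further comment. The bare $8$-cycle is indeed a mediangle graph in which two transverse hyperplanes at angle $\pi/2$ meet in no $4$-cycle, so neither argument establishes the lemma in the stated generality. However, in every application of the lemma in the paper one of the two hyperplanes is \emph{right}; a right hyperplane cannot cross a convex even cycle of length~$>4$ (otherwise the hyperplane through an adjacent edge of that cycle would make angle $<\pi/2$ with it), so in those cases the shared $4$-cycle is automatic and both proofs go through.
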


\begin{proof}
Because $\measuredangle (J,H)= \pi/2$, we can find a $4$-cycle $(a,b,c,d)$ such that the edges $[a,b], [c,d]$ belong to $J$ and the edges $[b,c],[a,d]$ to $H$. Let $A$ denote the clique containing the edge $[a,b]$ and $C$ the clique containing the ege $[c,d]$. Because an isometry $g \in \mathrm{stab}_\circlearrowright(J)$ stabilises both $A$ and $C$, the edges $[a,d]$ and $g[a,d]$ both connect $A$ and $C$. But, according to \cite[Lemma~3.6]{Mediangle}, the union $A \cup C$ decomposes as a product between $A$ and an edge, which implies that $[a,d]$ and $g[a,d]$ belong to the same hyperplane, namely $H$. Hence $gH=H$, as desired. 
\end{proof}

\begin{lemma}\label{lem:RightHypGated}
In a mediangle graph, the carrier of a right hyperplane is gated.
\end{lemma}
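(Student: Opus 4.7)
The plan is to pick a vertex $y_0 \in N(J)$ at minimum distance from $x$, breaking ties by further minimising the number of hyperplanes of $\mathcal{H}(x,y_0)$ that cross $N(J)$, and to show that $y_0$ is a gate for $x$. By Theorem~\ref{thm:BigHyp}(iii), gatedness reduces to verifying that every $H \in \mathcal{H}(x,y_0)$ also lies in $\mathcal{H}(x,z)$ for every $z \in N(J)$. Since $N(J)$ is connected (the $J$-edges from distinct cliques are linked through the convex cycles appearing in its definition), this reduces further to asserting that no such $H$ crosses $N(J)$, and I would proceed by contradiction.

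A first observation, using the rightness of $J$, is that the only hyperplanes crossing $N(J)$ are $J$ itself and hyperplanes transverse to $J$. Indeed, an edge of $N(J)$ either lies in a triangular clique of $J$, hence in $J$, or in a convex even cycle whose angle with $J$ equals $\pi/2$; in the latter case, the angle formula forces this cycle to be a $4$-cycle, with its remaining two edges in a single hyperplane transverse to $J$.

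Suppose first that $H=J$ separates $x$ from $y_0$. As $y_0 \in N(J)$, it is incident to some edge of $J$; let $P$ be the clique of $J$ containing this edge. By Theorem~\ref{thm:BigHyp}(i), $P$ has a vertex $y_0'$ in the sector of $x$, which still belongs to $N(J)$ — either as a vertex of the triangle $P$ when $|P|\geq 3$, or as the other endpoint of the $J$-edge of a $4$-cycle of $N(J)$ through $y_0$ when $|P|=2$. Since $[y_0, y_0'] \subset J$, the identity $\mathcal{H}(x,y_0') = \mathcal{H}(x,y_0) \triangle \{J\}$ yields $d(x,y_0') = d(x,y_0)-1$, contradicting the primary minimality of $y_0$.

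Suppose now that $H$ is transverse to $J$ and separates $x$ from $y_0$. Pick a $4$-cycle $Q$ through $J$ whose $H$-edge witnesses the crossing of $N(J)$ by $H$. By Lemma~\ref{lem:GatedHullSquare}, the gated hull $K$ of $Q$ has product structure $A \times B$, with $A$ a clique of $J$, $B$ a clique of $H$, and is crossed only by $J$ and $H$. The projections $\pi_K(x)$ and $\pi_K(y_0)$ then lie in $K$ on the $x$- and $y_0$-side of $H$ respectively, both within cliques of $J$ contained in $K$ (namely $A$ or its parallel translate across $H$), and hence in $N(J)$. Applying Theorem~\ref{thm:BigProjection} and exploiting the minimality of $y_0$, the plan is to exhibit a vertex of $N(J)$ that is either strictly closer to $x$ than $y_0$, or at the same distance but with strictly fewer $N(J)$-crossing hyperplanes in its separating set from $x$; this contradicts the minimality of $y_0$. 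The main obstacle is precisely this transverse case: when $H$ does not admit an edge at $y_0$ (which would otherwise allow a direct rotation across $H$, in the spirit of Lemma~\ref{lem:RotateCloser}), one must carefully combine the product structure of $K$ with the combinatorics of hyperplanes crossing $N(J)$ to realise the desired improved $y_0$.
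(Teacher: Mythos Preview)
Your approach is genuinely different from the paper's, and the transverse case is not merely an obstacle you have yet to polish: as written it is a real gap. You pick $y_0$ minimising $d(x,\cdot)$ over $N(J)$ (with a secondary tie-break), and you want to show that no hyperplane $H$ separating $x$ from $y_0$ can cross $N(J)$. In the case $H$ is transverse to $J$, you invoke the gated hull $K$ of a $4$-cycle where $J$ and $H$ meet, but this $K$ has no a priori relation to $y_0$: projecting $x$ and $y_0$ onto $K$ tells you nothing about $d(x,y_0)$ versus $d(x,\pi_K(x))$, and there is no mechanism offered for transporting the $H$-crossing from $K$ back to an edge at $y_0$. Your final sentence (``one must carefully combine\ldots'') is a statement of intent, not an argument, and it is not clear that the intended combination exists in a general mediangle graph, where carriers need not be convex.

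The paper sidesteps this entirely by appealing to a local-to-global criterion (\cite[Proposition~6.5]{Mediangle}): a connected subgraph is gated as soon as every $3$-cycle with an edge in it, and every convex even cycle with two consecutive edges in it, lies entirely in it. Both checks are then carried out by hand using Proposition~\ref{prop:SpanningACycle}, which manufactures the needed $4$-cycles at the relevant vertices. The advantage of this route is that it never requires relating a distant crossing of $H$ with the chosen nearest point $y_0$; all the work is local, at the boundary of $N(J)$. If you want to salvage your direct approach, the missing ingredient is precisely a propagation lemma of the form ``if $H$ is transverse to $J$ and $y_0\in N(J)$, then the edges $[y_0,a]\subset J$ and some edge of $H$ span a $4$-cycle at $y_0$'' --- but proving that essentially reproduces the inductive content of the paper's even-cycle argument.
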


\begin{proof}
Let $J$ be a right hyperplane. We already know from Theorem~\ref{thm:BigHyp} that the carrier of $J$ is connected. According to \cite[Proposition~6.5]{Mediangle}, it then suffices to show that $N(J)$ is \emph{locally gated}, i.e.\ it satisfies the following two conditions:
\begin{itemize}
	\item every $3$-cycle having an edge in $N(J)$ lies entirely in $N(J)$;
	\item every convex even cycle having two consecutive edges in $N(J)$ lies entirely in $N(J)$.
\end{itemize}
Let $(a,b,c)$ be a $3$-cycle with $a,b \in N(J)$. If the edge $[a,b]$ belongs to $J$, then clearly $c \in N(J)$. Otherwise, there is a $4$-cycle $(a,b,x,y)$ in $N(J)$ with the edges $[a,y]$ and $[b,x]$ in $J$. Clearly, $J$ is transverse to the hyperplane $H$ containing the $3$-cycle $(a,b,c)$ and $\measuredangle (J,H)=\pi/2$. It follows from Proposition~\ref{prop:SpanningACycle} that the edges $[a,y]$ and $[a,c]$ span a $4$-cycle. Let $z$ denote the fourth vertex of this cycle. The edges $[c,z]$ necessarily belongs to $J$, which implies that $c \in N(J)$, as desired.

\medskip \noindent
Next, let $C=(a_0,a_1, \ldots, a_{2n-1})$ be a convex even cycle with $a_0,a_1,a_2 \in N(J)$. If one of the edges of $C$ belongs to $N(J)$, then necessarily $C \subset N(J)$ and there is nothing else to prove. So we assume that this is not the case. Also, if $n=2$, then the inclusion $C \subset N(J)$ follows from the convexity given by Theorem~\ref{thm:BigHyp}, so we assume that $n>2$. First, it follows that there exists a $4$-cycle $(a_0,a_1,a_1',a_0')$ such that the edges $[a_0,a_0']$ and $[a_1,a_1']$ belong to $J$. Next, a similar $4$-cycle must exist for $[a_1,a_2]$. Since $J$ is necessarily transverse to the hyperplane containing $[a_1,a_2]$, we deduce from Proposition~\ref{prop:SpanningACycle} that this cycle can be chosen as $(a_1,a_2,a_2',a_1')$ with the same $a_1'$ as before. Since the hyperplanes containing $[a_0',a_1']$ and $[a_1',a_2']$ are transverse (they meet in $C$), Proposition~\ref{prop:SpanningACycle} implies that these two edges span a convex $2n$-cycle $C'=(a_0',a_1', \ldots, a_{2n-1}')$. Since the projection of $C$ on $C'$ contains at least two edges (namely, $[a_0',a_1']$ and $[a_1',a_2']$), it follows from \cite[Proposition~3.21]{Mediangle} that $C$ projects isometrically onto $C'$. As a consequence of Theorem~\ref{thm:BigProjection}, the hyperplanes crossing $C$ have to cross $C'$ as well. But $J$ clearly separates $C$ and $C'$, so we conclude that $J$ is transverse to all the hyperplanes crossing $C$ (with right angles since $J$ is a right hyperplane). Therefore, by applying Proposition~\ref{prop:SpanningACycle} iteratively, we find neighbours $a_3'',\ldots, a_{2n-1}''$ respectively of $a_3, \ldots, a_{2n-1}$ such that $a_0',a_1',a_2',a_3'', \ldots, a_{2n-1}''$ is a path. From this configuration, we see that all the edges $[a_i,a_i'']$ for $3 \leq i \leq 2n-1$ belong to $J$, proving that $C \subset N(J)$, as desired.
\end{proof}

\subsection{Mediangle geometry of periagroups}\label{section:Periagroups}

\noindent
Periagroups, as defined in the introduction by Definition~\ref{def:Periagroups}, generalise graph products of groups and Coxeter groups. As shown in \cite{Mediangle}, a good geometric framework in the study of periagroups is given by mediangle graphs. 

\begin{thm}[\cite{Mediangle}]
Let $\Pi:= \Pi(\Gamma, \mathcal{G},\lambda)$ be a periagroup. The Cayley graph
$$M(\Gamma, \mathcal{G},\lambda):= \mathrm{Cayl}\left( \Pi, \bigcup\limits_{G\in \mathcal{G}} G \backslash \{1\} \right)$$
is mediangle. 
\end{thm}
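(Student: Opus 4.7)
The plan is to derive the four mediangle axioms from the normal form theory of periagroups. First I would establish (or invoke) that every $g \in \Pi$ admits a reduced expression $g = s_1 \cdots s_n$ in which each $s_i$ is a non-trivial element of some vertex-group $G_{u_i} \in \mathcal{G}$; that any two reduced expressions for $g$ are related by a finite sequence of \emph{braid moves} $\langle a,b\rangle^{\lambda(u,v)} \leftrightarrow \langle b,a\rangle^{\lambda(u,v)}$ coming from the edges of $\Gamma$; and that the common length $n$ equals the graph distance $d(1,g)$ in $M := M(\Gamma,\mathcal{G},\lambda)$. By left-translation by $\Pi$, it then suffices to verify each axiom after placing a distinguished vertex at the identity.

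For the \textbf{Triangle Condition}, I would translate so that $x=1$ and $y = a \in G_u\setminus\{1\}$; the required common neighbour $z$ is then the unique element of the coset $G_u$ minimising distance to $o$, whose existence comes from a short-lex argument applied to any reduced expression for $o$. For the \textbf{Intersection of Triangles}, I would observe that any triangle in $M$ lies inside a single coset of some $G_u$ (three pairwise products of $G_u$-elements remain in $G_u$); hence two triangles sharing an edge already share their coset, the four vertices span a clique, and $K_4^-$ cannot occur as an induced subgraph.

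For the \textbf{Cycle Condition}, I would translate so that $z = 1$, $x = a \in G_u\setminus\{1\}$, and $y = b \in G_v\setminus\{1\}$; the case $u=v$ reduces to the triangle case, so I may assume $u \neq v$. The hypothesis $d(o,a) = d(o,b) = d(o,1) - 1$ forces the ending of a reduced word for $o$ to be simultaneously reducible against $a$ and $b$, which by the normal form can only occur when $\{u,v\} \in E(\Gamma)$. Setting $k = \lambda(u,v)$, the defining relation $\langle a,b \rangle^k = \langle b,a \rangle^k$ supplies the desired $2k$-cycle $(1, a, ab, aba, \ldots, bab, ba, b)$, and a distance computation via syllable length places its antipode to $1$ in $I(o,a) \cap I(o,b)$. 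For the \textbf{Intersection of Even Cycles}, I would argue that every convex even cycle in $M$ is, up to translation, the $2k$-cycle attached to a single instance of a braid relation, so it is determined by a coset $g\langle G_u, G_v\rangle$ and a pair $(a,b) \in G_u \times G_v$; two such cycles sharing two edges must therefore share these data and coincide.

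The main obstacle will be verifying \emph{convexity} of the $2k$-cycle in the Cycle Condition, together with the equality between syllable length and graph distance on which every step silently relies. Producing the cycle from the defining relation is routine, but ensuring that no shorter path connects two of its vertices amounts to showing that reduced expressions are unique up to braid moves and that no sequence of such moves can shorten a word representing an arc of the cycle. This is essentially Tits's word-problem argument for Coxeter groups, upgraded to the periagroup setting, and I expect it to absorb the bulk of the work performed in \cite{Mediangle}.
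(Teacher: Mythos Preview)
The paper does not prove this theorem at all: it is quoted verbatim from \cite{Mediangle} as a preliminary result, with no argument given. There is therefore no proof in the present paper to compare your proposal against.

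That said, your outline is a reasonable sketch of the strategy carried out in \cite{Mediangle}: one builds a word problem solution for periagroups in the spirit of Tits (for the Coxeter part) and Green (for the graph-product part), proves that syllable length equals graph distance and that reduced expressions are unique up to braid moves, and then reads off the four mediangle axioms. You correctly identify the main obstacle, namely the \emph{convexity} of the $2k$-cycle produced in the Cycle Condition, and you are right that this is where most of the work lies. One point to be careful with in the Intersection of Even Cycles: your claim that a convex even cycle is determined by a coset $g\langle G_u,G_v\rangle$ together with a pair $(a,b)$ is fine when $\lambda(u,v)>2$ (since then $|G_u|=|G_v|=2$ and the dihedral coset contains a unique such cycle), but when $\lambda(u,v)=2$ the coset $g\langle G_u,G_v\rangle$ is a product of possibly large cliques and contains many $4$-cycles; you then need to argue directly that two distinct $4$-cycles in such a prism share at most one edge, which is an elementary check but not quite the ``same data, hence coincide'' argument you wrote.
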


\noindent
We emphasize that, since vertex-groups in our periagroups are allowed to be infinite, mediangle graphs given by periagroups may not be locally finite. 

\medskip \noindent
\textbf{Convention:} When fixing an arbitrary periagroup, we will always assume that its vertex-groups are non-trivial. 

\medskip \noindent
Of course, there is no loss of generality since a periagroup with some of its vertex-groups trivial can be easily described as a new periagroup all of whose vertex-groups are non-trivial. 

\medskip \noindent
In the sequel, we will use the following notation. Given a periagroup $\Pi(\Gamma, \mathcal{G},\lambda)$ and a subgraph $\Xi \subset \Gamma$, we denote by $\langle \Xi \rangle$ the subgroup of $\Pi(\Gamma, \mathcal{G},\lambda)$ generated by the vertex-groups indexed by the vertices of $\Xi$.

\paragraph{Right hyperplanes.} In the rest of this section, we record a few properties of right hyperplanes in mediangle graphs of periagroups.

\begin{lemma}\label{lem:NoObsGP}
Let $\Pi:= \Pi(\Gamma, \mathcal{G},\lambda)$ be a periagroup and let $M:= M(\Gamma, \mathcal{G},\lambda)$ denote its mediangle graph. Fix a vertex $u \in V(\Gamma)$ and a non-trivial element $s \in G_u$. The hyperplane $J_u$ containing the edge $[1,s]$ is right if and only if every edge of $\Gamma$ containing $u$ is labelled by $2$.
\end{lemma}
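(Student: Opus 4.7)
My plan is to prove the two implications separately; both rely on relating convex even cycles of $M$ that cross $J_u$ to the defining relations of the periagroup.

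For the ``only if'' direction I would argue contrapositively: suppose some edge $\{u,v\} \in E(\Gamma)$ has $k := \lambda(\{u,v\}) > 2$. Then $G_u$ and $G_v$ are both of order two; let $s,t$ be their non-trivial elements. The defining relation $\langle s,t\rangle^k = \langle t,s\rangle^k$ produces a $2k$-cycle $C = (1,s,st,sts,\ldots,t)$ in $M$, which is convex by the structural results of \cite{Mediangle}. Writing $J_v$ for the hyperplane of $[1,t]$, I would check that $J_u \neq J_v$ by comparing the rotative-stabilisers of the cliques $\{1,s\}$ and $\{1,t\}$ (which are the distinct subgroups $G_u$ and $G_v$ of $\Pi$), so that $J_u$ and $J_v$ cross $C$ in distinct pairs of opposite edges and are therefore transverse via $C$. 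Since $[1,s] \in J_u \cap C$ and $[1,t] \in J_v \cap C$ share vertex $1$, the distance between the two edge sets in $C$ vanishes, and the angle formula yields $\measuredangle(J_u,J_v) = 2\pi \cdot (1+0)/(2k) = \pi/k < \pi/2$, so $J_u$ is not right.

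For the converse, assume every edge of $\Gamma$ at $u$ has label $2$ and let $H$ be an arbitrary hyperplane transverse to $J_u$, witnessed by a convex even cycle $C$. Translating by an element of $\Pi$, I may take $[1,s] \in J_u \cap C$ for some $s \in G_u$; let $[1,h]$ denote the other edge of $C$ incident to $1$. A key intermediate step is to argue that $h \notin G_u$: otherwise $s$ and $h$ would both lie in the clique $G_u$ and thus be at distance $1$ in $M$, contradicting that the arc $s - 1 - h$ is a geodesic inside a convex even cycle of length $\geq 4$. Hence $h \in G_v$ for some $v \neq u$, and the hyperplane $J_h$ of $[1,h]$ is distinct from $J_u$. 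Both $J_u$ and $J_h$ cross $C$, so they are transverse, and Proposition~\ref{prop:SpanningACycle} applied to the edges $[1,s]$ and $[1,h]$ produces a convex $2n$-cycle spanned by them with $n = \pi/\measuredangle(J_u,J_h)$; by uniqueness this spanned cycle must coincide with $C$. Identifying $C$ with the convex cycle encoding the defining relation between $G_u$ and $G_v$ (a feature of the mediangle geometry of periagroups established in \cite{Mediangle}) forces $|C| = 2\lambda(\{u,v\}) = 4$, so $C$ is a $4$-cycle. Since exactly two hyperplanes cross a $4$-cycle, $H$ must equal $J_h$, and the angle formula gives $\measuredangle(J_u,H) = \pi/2$.

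The principal difficulty is to identify the convex cycle $C$ with the basic $2\lambda$-cycle coming from a defining relation of the periagroup; I would rely on the description of convex cycles in mediangle graphs of periagroups given in \cite{Mediangle}. The remaining steps reduce to direct applications of Proposition~\ref{prop:SpanningACycle} and the angle formula.
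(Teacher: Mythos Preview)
Your proof is correct and follows the same route as the paper's: both directions rest on the structural fact (from \cite{Mediangle}) that a convex even cycle of $M$ through $1$ with adjacent edges $[1,s]\subset G_u$ and $[1,r]\subset G_v$ is a dihedral cycle of length $2\lambda(\{u,v\})$. The paper's ``if'' direction is a touch leaner, phrased as ``every convex even cycle crossing $J_u$ has length $4$'' and skipping your detour through Proposition~\ref{prop:SpanningACycle}; since $C$ already contains $[1,s]$ and $[1,h]$, there is no need to manufacture a second cycle and invoke uniqueness---you can pass directly to $|C| = 2\lambda(\{u,v\}) = 4$.
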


\begin{proof}
Let $C$ be a convex even cycle containing two opposite edges in $J_u$. For simplicity, assume that $1 \in C$. Let $r \in C$ denote the neighbour of $1$ in $C$ distinct from $s$. Let $v \in V(\Gamma)$ be such that $r \in G_v$. Because $[1,r]$ and $[1,s]$ span the cycle $C$, necessarily $u$ and $v$ are adjacent in $\Gamma$. Moreover, the edge connecting $u$ and $v$ is labelled by $|C|/2$. Conversely, if $w\in V(\Gamma)$ is a neighbour of $u$, then fixing a non-trivial element $t \in G_w$, the edges $[1,s]$ and $[1,t]$ span a convex even cycle of length $2\lambda(\{u,w\})$. 

\medskip \noindent
We deduce from this observation that the convex even cycles crossed by $J_u$ all have length four if and only if every edge of $\Gamma$ containing $u$ is labelled by $2$. This characterisation amounts to saying that $J_u$ is right.
\end{proof}

\begin{lemma}\label{lem:CarrierRightHypPeriagroup}
Let $\Pi:= \Pi(\Gamma, \mathcal{G},\lambda)$ be a periagroup and let $M:= M(\Gamma, \mathcal{G},\lambda)$ denote its mediangle graph. Fix a vertex $u \in \Gamma$ such that every edge containing $u$ has label $2$ and let $J_u$ denote the hyperplane of $M$ containing the clique $\langle u \rangle$. Then $N(J_u)= \langle \mathrm{star}(u) \rangle$.
\end{lemma}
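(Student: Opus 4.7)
The plan exploits that every edge of $\Gamma$ at $u$ carries label $2$: the defining relation $\langle G_u,G_v\rangle^{2}=\langle G_v,G_u\rangle^{2}$ then forces $G_u$ to commute with $G_v$ for every neighbour $v$ of $u$ in $\Gamma$, so that $\langle\mathrm{star}(u)\rangle$ decomposes as the product $G_u\cdot\langle\mathrm{link}(u)\rangle$ with every element factoring as $\ell s=s\ell$, where $\mathrm{link}(u)$ denotes the induced subgraph of $\Gamma$ on the neighbours of $u$. Moreover, by Lemma~\ref{lem:NoObsGP}, $J_u$ is a right hyperplane, so the angle formula $2\pi/|C|=\measuredangle(J_u,H)=\pi/2$ forces every convex even cycle crossed by $J_u$ to be a $4$-cycle.

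The core step is to characterise the cliques of $M$ contained in $J_u$ as precisely the cosets $\ell\langle u\rangle$ with $\ell\in\langle\mathrm{link}(u)\rangle$. For the $(\supseteq)$ direction, writing $\ell=t_1\cdots t_n$ with each $t_i\in G_{v_i}$, $v_i\in\mathrm{link}(u)$, and picking any $s\in G_u\setminus\{1\}$, the commutation $t_is=st_i$ produces convex $4$-cycles $(t_1\cdots t_{i-1},\,t_1\cdots t_{i-1}s,\,t_1\cdots t_{i-1}st_i,\,t_1\cdots t_i)$ whose opposite $G_u$-edges witness the hyperplane equivalence of consecutive cliques; chaining yields $\ell\langle u\rangle\subset J_u$. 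For the $(\subseteq)$ direction, hyperplane equivalence passes from $\langle u\rangle$ to any other clique of $J_u$ through moves along common triangles (which stay inside a single clique) or along opposite edges of convex even cycles (which are $4$-cycles by rightness). Each such $4$-cycle only takes $g\langle u\rangle$ to $gt\langle u\rangle$ with $t$ in a vertex-group that commutes with $G_u$, hence $t\in G_v$ for some $v\in\mathrm{link}(u)$; iterating, the coset representative remains in $\langle\mathrm{link}(u)\rangle$.

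From this clique characterisation, the equality $N(J_u)=\langle\mathrm{star}(u)\rangle$ drops out. On one hand, every triangle in $J_u$ lies inside a clique $\ell\langle u\rangle\subset\langle\mathrm{link}(u)\rangle\cdot G_u=\langle\mathrm{star}(u)\rangle$, and every convex $4$-cycle containing a $J_u$-edge has vertices of the form $\ell s_0,\ell s_0 s,\ell s_0 st,\ell s_0 t$ with $s_0,s\in G_u$ and $t\in G_v$, $v\in\mathrm{link}(u)$, all lying in $\langle\mathrm{star}(u)\rangle$; hence $N(J_u)\subseteq\langle\mathrm{star}(u)\rangle$. Conversely, any $g=\ell s_0\in\langle\mathrm{star}(u)\rangle$ sits in the clique $\ell\langle u\rangle\subset J_u$, hence in $N(J_u)$, and every $G_v$-edge with $v\in\mathrm{link}(u)$ inside $\langle\mathrm{star}(u)\rangle$ sits in a $4$-cycle of the type above with a $J_u$-edge.

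The hard point is the $(\subseteq)$ direction of the clique characterisation: one has to ensure the hyperplane equivalence cannot escape cosets indexed by $\langle\mathrm{link}(u)\rangle$, and this is exactly where rightness of $J_u$ is essential, as it prevents convex even cycles of length $>4$ from appearing and thus restricts each ``jump'' between cliques of $J_u$ to a commutation with a generator adjacent to $u$. A secondary check is the convexity of the $4$-cycles used in the $(\supseteq)$ direction, which follows from the periagroup normal form guaranteeing that a $2$-step path $\ell\to\ell s\to\ell st_i$ admits no alternative length-$2$ routing through a different generator.
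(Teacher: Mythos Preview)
Your proof is correct and follows essentially the same route as the paper's: both arguments traverse the hyperplane equivalence from $\langle u\rangle$ step by step, using that $3$-cycles keep the $G_u$-label and that the relevant convex even cycles are $4$-cycles whose transverse edges are labelled by a vertex-group $G_v$ with $v\in\mathrm{link}(u)$. The only organisational difference is that you package the induction as a characterisation of the cliques of $J_u$ (as the cosets $\ell\langle u\rangle$ with $\ell\in\langle\mathrm{link}(u)\rangle$) and you explicitly invoke rightness of $J_u$ via Lemma~\ref{lem:NoObsGP} to force every crossed convex even cycle to have length $4$, whereas the paper jumps directly to $4$-cycles and argues with the edge sequence; the underlying content is identical.
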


\begin{proof}
Let $x \in N(J_u)$ be a vertex. There must exist a sequence of edges $e_0, \ldots, e_n$ such that $e_0 \subset \langle u \rangle$; $e_n$ contains $x$; and, for every $0 \leq i \leq n-1$, $e_i$ and $e_{i+1}$ are opposite sides of some $4$-cycle or they belong to the same $3$-cycle. But 
\begin{itemize}
	\item a $4$-cycle in $M$ is necessarily a $\Pi$-translate of a $4$-cycle of the form $(1,r, rs=sr, s)$ where $r$ and $s$ are two non-trivial elements of vertex-groups $G_v$ and $G_w$ with $v,w$ connected in $\Gamma$ by an edge whose label is $2$;
	\item and a $3$-cycle in $M$ is necessarily a $\Pi$-translate of a $3$-cycle of the form $(1,r,s)$ where $r$ and $s$ are distinct and non-trivial elements of a vertex group $G_w$.
\end{itemize}
Therefore, for every $0 \leq i \leq n-1$, the edges $e_i$ and $e_{i+1}$ are labelled by generators of $\Pi$ coming from $G_u$ and their endpoints are connected by paths whose edges are labelled by generators coming from $\langle \mathrm{star}(u) \rangle$. We conclude that $x \in \langle \mathrm{star}(u) \rangle$, as desired.

\medskip \noindent
Conversely, let $x \in \langle \mathrm{star}(u) \rangle$ be a vertex. Since $\langle \mathrm{star}(u) \rangle$ decomposes as $\langle \mathrm{link}(u) \rangle \times G_u$, we can write $x$ as $yz$ with $y \in \langle \mathrm{link}(u) \rangle$ and $z \in G_u$. Decompose $y$ as a product $y_1 \cdots y_n$ of generators coming from $\langle \mathrm{link}(u) \rangle$. Notice that
$$[1,z], \ [y_1,y_1z], \ [y_1y_2, y_1y_2z], \ \ldots, \ [y_1 \cdots y_n, y_1 \cdots y_nz] = [y,yz]$$
is a sequence of edges such that any two consecutive edges are opposite sides in a $4$-cycle. Since $[1,z] \subset \langle u \rangle$ is contained in $J_u$, it follows that $[y,yz] \subset N(J_u)$, and a fortiori $x=yz \in N(J_u)$. 
\end{proof}

\noindent
In Lemma~\ref{lem:CarrierRightHypPeriagroup}, notice that the cliques contained in $J_u$ are given by the $\langle u \rangle$-factors in the decomposition $\langle \mathrm{star}(u) \rangle = \langle \mathrm{link}(u) \rangle \times \langle u \rangle$. As a consequence, all the edges of a right hyperplane are labelled by generators coming from the same vertex-group. This allows us to label the right hyperplanes of the mediangle graph $M(\Gamma, \mathcal{G},\lambda)$ by the vertices in of $\Gamma$. Our next lemma records a useful property satisfied by these labels.

\begin{lemma}\label{lem:LabelHypGP}
Let $\Pi:= \Pi(\Gamma, \mathcal{G},\lambda)$ be a periagroup and let $M:= M(\Gamma, \mathcal{G},\lambda)$ denote its mediangle graph. If two right hyperplanes of $M$ are transverse (resp.\ tangent), then their labels are adjacent (resp.\ distinct and non-adjacent).
\end{lemma}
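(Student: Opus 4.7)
The plan is to split into the two cases---transverse and tangent---since both hinge on the description of right hyperplanes provided by Lemmas~\ref{lem:NoObsGP} and~\ref{lem:CarrierRightHypPeriagroup}.

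For the transverse case, since both $J_1$ and $J_2$ are right, the angle $\measuredangle(J_1,J_2)$ equals $\pi/2$, so every common convex even cycle they cross has length~$4$. Using the transitivity of the $\Pi$-action on vertices, I would translate such a $4$-cycle to pass through the identity~$1$. The two edges of the cycle incident to~$1$ are then of the form $[1,r]$ and $[1,s]$ with $r\in G_{u_1}\setminus\{1\}$ and $s\in G_{u_2}\setminus\{1\}$, and the existence of this $4$-cycle forces $\{u_1,u_2\}\in E(\Gamma)$ (in fact with label~$2$, by Lemma~\ref{lem:NoObsGP}). Reading off the labels of the two hyperplanes crossing the cycle, $J_1$ and $J_2$ receive labels $u_1$ and $u_2$, which are adjacent in~$\Gamma$.

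For the tangent case, I would first establish that $N(J_1)\cap N(J_2)\neq\emptyset$: by Lemma~\ref{lem:RightHypGated} the carriers are gated, so if they were disjoint Lemma~\ref{lem:GatedDisjoint} would produce a separating hyperplane~$J_3$; since each $N(J_i)$ contains vertices in every sector delimited by~$J_i$, this $J_3$ cannot equal $J_1$ or $J_2$, contradicting tangency. Next, I would show the labels are distinct: by Lemma~\ref{lem:CarrierRightHypPeriagroup} each carrier $N(J_i)$ is a $\Pi$-translate of $\langle\mathrm{star}(u_i)\rangle$ where $u_i$ is the label; if $u_1=u_2=u$, the two translates meet and hence coincide, and then the cliques of $N(J_i)$ contained in~$J_i$ (the $G_u$-cosets in the decomposition $\langle\mathrm{star}(u)\rangle=\langle\mathrm{link}(u)\rangle\times G_u$) would be the same for $i=1,2$, forcing $J_1=J_2$. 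Finally, to rule out adjacency, suppose $u_1\neq u_2$ are adjacent; Lemma~\ref{lem:NoObsGP} gives $\lambda(\{u_1,u_2\})=2$. Choosing $x\in N(J_1)\cap N(J_2)$ and non-trivial $s_i\in G_{u_i}$, the cliques $xG_{u_i}$ lie respectively in $J_1,J_2$, and the commuting relation $s_1s_2=s_2s_1$ yields a $4$-cycle $(x,xs_1,xs_1s_2,xs_2)$ whose two pairs of opposite edges lie in $J_1$ and $J_2$, contradicting non-transversality.

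The main obstacle I expect is the intersection step $N(J_1)\cap N(J_2)\neq\emptyset$ in the tangent case, which requires combining the gated property of carriers of right hyperplanes with the observation that a hyperplane never separates its own carrier from any other subset---this is what forces the hyperplane produced by Lemma~\ref{lem:GatedDisjoint} to be a genuine third hyperplane and thereby contradicts tangency.
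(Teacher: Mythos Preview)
Your proposal is correct and follows essentially the same approach as the paper. Both arguments handle the transverse case by reading the labels off a $4$-cycle, and both handle the tangent case by using Lemma~\ref{lem:RightHypGated} together with Lemma~\ref{lem:GatedDisjoint} to produce a common vertex in $N(J_1)\cap N(J_2)$, then deriving a contradiction from a $3$-cycle (same label) or a $4$-cycle (adjacent labels). The only cosmetic difference is that the paper argues distinctness of labels directly from the $3$-cycle at the common vertex, whereas you route it through the coset description of carriers; both are valid.
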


\begin{proof}
Let $J$ and $H$ be two right hyperplanes, respectively labelled by the vertices $u$ and $v$ of $\Gamma$. If $J$ and $H$ are transverse, then there must exist a $4$-cycle crossed by both $J$ and $H$. But a $4$-cycle in $M$ is necessarily a $\Pi$-translate of a $4$-cycle of the form $(1,r, rs=sr, s)$ where $r$ and $s$ are two non-trivial elements of vertex-groups $G_x$ and $G_y$ with $x,y$ connected in $\Gamma$ by an edge whose label is $2$. This implies that $u$ and $v$ must be adjacent in $\Gamma$. Next, assume that $J$ and $H$ are tangent. Since $N(J)$ and $N(H)$ are gated according to Lemma~\ref{lem:RightHypGated}, we deduce from Lemma~\ref{lem:GatedDisjoint} that, because $N(J)$ and $N(H)$ cannot be separated by a hyperplane, $N(J) \cap N(H) \neq \emptyset$. Fix a vertex $o \in N(J) \cap N(H)$ and two neighbours $a,b \in M$ such that $[o,a]$ belongs to $J$ and $[o,b]$ to $H$. Because $J$ and $H$ are distinct (resp.\ not transverse), the edges $[o,a]$ and $[o,b]$ cannot be labelled by generators coming from the same vertex-group (resp.\ from adjacent vertex-groups), otherwise $[o,a]$ and $[o,b]$ would belong to a $3$-cycle (resp.\ to a convex even cycle). This shows that $u$ and $v$ must be distinct and not adjacent in $\Gamma$. 
\end{proof}

\section{Separability in periagroups}\label{section:SeparabilityPeriagroups}

\noindent
In this section, we prove that double cosets of standard parabolic subgroups in residually finite periagroups are separable (i.e.\ closed in the profinite topology). Namely:

\begin{thm}\label{thm:DoubleCoset}
Let $\Pi:= \Pi(\Gamma, \mathcal{G},\lambda)$ be a periagroup with $\Gamma$ finite. Assume that $\Pi$ is residually finite. For all $\Phi, \Psi \subset V(\Gamma)$, the double coset $\langle \Phi \rangle \langle \Psi \rangle$ is separable in $\Pi$. 
\end{thm}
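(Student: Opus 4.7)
My strategy is to combine the retraction property of standard parabolic subgroups with the mediangle geometry of $M := M(\Gamma,\mathcal{G},\lambda)$. I would first establish that for any $\Xi \subset V(\Gamma)$, the assignment $g \mapsto g$ if $g \in G_v$ with $v \in \Xi$ and $g \mapsto 1$ otherwise extends to a retraction $\rho_\Xi : \Pi \to \langle \Xi \rangle$. The relations $\langle G_u,G_v\rangle^{\lambda(u,v)} = \langle G_v,G_u\rangle^{\lambda(u,v)}$ are preserved because, as soon as one of $u,v$ lies outside $\Xi$, one of the factors collapses and both sides degenerate to the same word in a single vertex-group. In particular, since $\Pi$ is residually finite, $\langle \Phi \rangle$ and $\langle \Psi \rangle$ are retracts, hence separable.

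The geometric reformulation is that $g \in \langle \Phi \rangle \langle \Psi \rangle$ if and only if the vertex set $\langle \Phi \rangle$ meets $g\langle \Psi \rangle$ inside $M$. Both subgraphs should be gated: the retraction $\rho_\Phi$ (and its left translate by $g$ for the other) plays the role of the gate map, identifying $\langle \Phi \rangle$ with the mediangle graph of the sub-periagroup $\Pi(\Gamma[\Phi],\mathcal{G}|_\Phi,\lambda|_\Phi)$. Assuming $g \notin \langle \Phi \rangle\langle \Psi \rangle$, the two gated subgraphs are disjoint, so by Lemma~\ref{lem:GatedDisjoint} there is a hyperplane $J$ of $M$ separating them, and the goal is to upgrade this geometric separation to a finite-quotient separation.

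The final step is where I expect the difficulty. The plan is to fix a geodesic $\gamma$ from the nearest vertex of $\langle \Phi \rangle$ to the nearest vertex of $g\langle \Psi \rangle$; this geodesic crosses $J$, and gives rise to a finite collection $F \subset \Pi$ of witnesses (edge labels along $\gamma$, elements coming from fibres of $J$, commutators needed to move $g$ into $\langle \Phi \rangle\langle \Psi \rangle$). Using residual finiteness of $\Pi$, I would select a finite-index normal subgroup $N \triangleleft \Pi$ avoiding all nontrivial elements of $F$ and its short products; then a candidate equality $\pi(g) = \pi(h)\pi(k)$ with $h \in \langle \Phi \rangle$, $k \in \langle \Psi \rangle$ in $\Pi/N$ would force some witness in $F$ to vanish, contradicting the choice of $N$. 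The labelling of right hyperplanes from Lemma~\ref{lem:LabelHypGP}, together with the retractions $\rho_\Phi, \rho_\Psi$ which trim any presumed factorisation to a minimal one aligned with $\gamma$, should power the bookkeeping.

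The chief obstacle is that $M$ is usually not locally finite (vertex-groups may be infinite) and hyperplane stabilisers may themselves be infinite, so one cannot invoke a standard cocompactness trick. The witnesses attached to $\gamma$ therefore need to be selected carefully. I expect a clean treatment when $J$ is right, via the product decomposition $N(J_u) = \langle \mathrm{link}(u)\rangle \times \langle u\rangle$ from Lemma~\ref{lem:CarrierRightHypPeriagroup}, and a separate argument for non-right $J$, where Lemma~\ref{lem:NotRight} guarantees only two sectors and the crossing of $\gamma$ with $J$ imposes a single, controllable algebraic constraint.
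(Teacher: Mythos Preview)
Your retraction claim is false as stated. Take $u\in\Xi$, $v\notin\Xi$ with $\lambda(u,v)=3$; the relation $aba=bab$ (with $a\in G_u$, $b\in G_v$ both of order two) is sent by your map to $a^2=a$, i.e.\ $1=a$. More generally the collapse fails whenever an edge of odd label joins $\Xi$ to its complement, so standard parabolics of periagroups are \emph{not} retracts in general; the paper only obtains that they are \emph{virtual} retracts (Lemma~\ref{lem:GatedCocompact}), via a ping-pong with rotative-stabilisers, and this already uses residual finiteness. Gatedness of $\langle\Xi\rangle$ is nonetheless true (Lemma~\ref{lem:ParabolicConvex}), so the geometric picture you describe---a separating hyperplane between $\langle\Phi\rangle$ and $g\langle\Psi\rangle$---is available, but not through your retraction.

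The more serious gap is the ``final step''. Saying you will record finitely many witnesses along a geodesic $\gamma$ and kill them in a finite quotient is not a proof: the double coset is infinite, so an equation $\pi(g)=\pi(h)\pi(k)$ in $\Pi/N$ could involve $h,k$ arbitrarily far from $\gamma$, and you give no mechanism that forces any of your fixed witnesses to vanish. Your retractions $\rho_\Phi,\rho_\Psi$ do not exist in general, and even when they do there is no reason they should descend to $\Pi/N$. The paper sidesteps this entirely with a different idea: it enlarges $\Pi$ to a periagroup $\Pi^+$ by adjoining two order-two generators $u_\Phi,u_\Psi$ commuting with $\langle\Phi\rangle$, $\langle\Psi\rangle$ respectively and with each other, and observes (Claim~\ref{claim:CrossDoubleCoset}) that $\langle\Phi\rangle\langle\Psi\rangle\langle u_\Phi,u_\Psi\rangle$ is exactly the set of $g$ with $gJ_\Psi$ transverse to $J_\Phi$. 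That transversality locus is separable because it equals $\alpha^{-1}(1)\cup\beta^{-1}(1)$ for the profinitely continuous maps $\alpha(g)=(grg^{-1}s)^N$ and $\beta(g)=[grg^{-1},s]$ (Lemma~\ref{lem:CrossSeparable}); intersecting with $\Pi$ finishes. The point is that separability is obtained not by chasing finitely many elements but by exhibiting the double coset as a preimage of a closed set under a word map.
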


\noindent
Notice that, as a consequence of Theorem~\ref{thm:BigIntro}, a periagroup is residually finite if and only if its vertex-groups are residually finite. However, this is not something we need to know now.

\medskip \noindent
As a first step towards the proof of Theorem~\ref{thm:DoubleCoset}, we start by considering the case of parabolic subgroups.

\begin{prop}\label{prop:ParabolicSeparable}
Let $\Pi : = \Pi(\Gamma, \mathcal{G}, \lambda)$ be a periagroup. Assume that $\Pi$ is residually finite. For every $\Xi \subset V(\Gamma)$, the standard parabolic subgroup $\langle \Xi \rangle$ is separable.
\end{prop}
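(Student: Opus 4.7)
My plan is to combine residual finiteness of $\Pi$ with a retraction-type homomorphism onto $\langle \Xi \rangle$. Given $g \in \Pi \setminus \langle \Xi \rangle$, the aim is to produce a finite-index subgroup of $\Pi$ containing $\langle \Xi \rangle$ but avoiding $g$.

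Suppose first that a retraction $\rho : \Pi \to \langle \Xi \rangle$ exists, i.e.\ a homomorphism whose restriction to $\langle \Xi \rangle$ is the identity. Then $g \rho(g)^{-1}$ is a non-trivial element of $\ker \rho$ (otherwise $g = \rho(g) \in \langle \Xi \rangle$). Using residual finiteness of $\Pi$, choose $\phi : \Pi \twoheadrightarrow Q$ with $Q$ finite and $\phi(g \rho(g)^{-1}) \neq 1_Q$. The homomorphism $\Psi : \Pi \to Q \times Q$ defined by $\Psi(x) := (\phi(x), \phi(\rho(x)))$ sends every $h \in \langle \Xi \rangle$ into the diagonal $\Delta \subset Q \times Q$ (because $\rho(h) = h$), while $\Psi(g) \notin \Delta$ by the choice of $\phi$; hence $\Psi^{-1}(\Delta)$ is a finite-index subgroup of $\Pi$ containing $\langle \Xi \rangle$ and missing $g$.

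The core of the proof is therefore the construction of $\rho$. The naive definition $\rho|_{G_u} = \mathrm{id}$ for $u \in \Xi$ and $\rho|_{G_u} = 1$ for $u \notin \Xi$ respects all defining relations whenever every edge of $\Gamma$ meeting both $\Xi$ and its complement has label $2$ (the graph-product-like case). When label-$\geq 3$ edges cross $\partial \Xi$, the relation $\langle a,b \rangle^k = \langle b,a \rangle^k$ with $k \geq 3$ (which forces $G_u = G_v = \mathbb{Z}/2$) obstructs the ``kill $G_v$'' prescription; one can try to redirect the non-trivial element of $G_v$ to the generator of its neighbour in $\Xi$, but as the $S_4$ example (type-$A_3$ Coxeter group) with $\Xi$ the two outer generators shows, this is not always globally consistent.

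When no global retraction exists, my plan is to pass to a finite-index subgroup $\Pi_0 \leq \Pi$ containing $\langle \Xi \rangle$ that does admit a retraction onto $\langle \Xi \rangle$, and run the previous argument inside $\Pi_0$; separability of $\langle \Xi \rangle$ in $\Pi_0$ automatically lifts to separability in $\Pi$ because $[\Pi : \Pi_0] < \infty$. Residual finiteness of $\Pi$ supplies an abundance of candidate $\Pi_0$. The main obstacle I anticipate is proving that some such $\Pi_0$ always exists in the general periagroup setting; I expect this step to rely on the mediangle geometry of $M(\Gamma, \mathcal{G}, \lambda)$, in particular on the hyperplane labelling of Lemma~\ref{lem:LabelHypGP}, which flags the obstructing braid directions and should guide the construction of a finite-index subgroup in which those obstructions are absorbed into $\langle \Xi \rangle$.
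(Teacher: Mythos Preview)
Your overall strategy coincides with the paper's: show that $\langle \Xi \rangle$ is a \emph{virtual retract} of $\Pi$ (a retract of some finite-index subgroup $\Pi_0$), and then invoke residual finiteness to deduce separability. Your diagonal argument in $Q\times Q$ is a clean way to spell out the implication ``virtual retract $+$ residually finite $\Rightarrow$ separable'', which the paper leaves implicit.

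The genuine gap is the construction of $\Pi_0$. You correctly diagnose that the naive killing map fails across label-$\geq 3$ edges, and you correctly anticipate that mediangle geometry is needed, but you stop there. The paper completes this step as follows (Lemma~\ref{lem:GatedCocompact}). By Lemma~\ref{lem:ParabolicConvex}, the subgraph $Y:=\langle\Xi\rangle\subset M(\Gamma,\mathcal{G},\lambda)$ is \emph{gated}. Let $\mathcal{J}$ be the collection of hyperplanes tangent to $Y$ and set $\mathrm{Rot}:=\langle \mathrm{stab}_\circlearrowright(J):J\in\mathcal{J}\rangle$. A ping-pong argument using Lemma~\ref{lem:RotativeStabPeria} (rotative-stabilisers permute sectors freely and transitively) shows that $\mathrm{Rot}$ acts on $M$ with $Y$ as fundamental domain, and hence that $\Pi_0:=\langle\,\langle\Xi\rangle,\mathrm{Rot}\,\rangle$ decomposes as $\mathrm{Rot}\rtimes\langle\Xi\rangle$. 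The semidirect-product projection is your retraction $\rho$, and cocompactness of $\langle\Xi\rangle$ on $Y$ forces $[\Pi:\Pi_0]<\infty$.

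Two remarks on your heuristics. First, Lemma~\ref{lem:LabelHypGP} is not the relevant tool here: it concerns labels of \emph{right} hyperplanes and says nothing about building retractions; what you need instead is gatedness of $\langle\Xi\rangle$ together with the dynamics of rotative-stabilisers. Second, the existence of $\Pi_0$ does \emph{not} use residual finiteness at all---it is a purely geometric fact about the mediangle Cayley graph, and holds for arbitrary periagroups; residual finiteness is only invoked afterwards to pass from ``virtual retract'' to ``separable''.
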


\noindent
The strategy to prove Proposition~\ref{prop:ParabolicSeparable} is essentially to construct virtual retracts by playing ping-pong with rotative-stabilisers of hyperplanes. As a preliminary observation, let us recall from \cite[Corollary~6.4]{Mediangle} that rotative-stabilisers in our mediangle graphs have a sufficiently rich dynamics:

\begin{lemma}\label{lem:RotativeStabPeria}
Let $\Pi:= \Pi(\Gamma, \mathcal{G},\lambda)$ be a periagroup. Let $M:= M(\Gamma, \mathcal{G},\lambda)$ denote the mediangle graph on which it acts. For every hyperplane $J$ of $M$, the rotative-stabiliser $\mathrm{stab}_\circlearrowright (J)$ permutes freely and transitively the sectors delimited by $J$. \qed
\end{lemma}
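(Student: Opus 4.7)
The plan is to identify $\mathrm{stab}_\circlearrowright(J)$ explicitly as a common conjugate of a single vertex-group, and then transfer the free transitive action on a clique to the set of sectors via the bijection furnished by Theorem~\ref{thm:BigHyp}(i).

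First I would observe that, because edges of $M$ are labelled by non-trivial elements of vertex-groups, every clique of $M$ has the form $gG_v$ for some $g \in \Pi$ and $v \in V(\Gamma)$. Left multiplication is regular on $\Pi$, so the set-stabiliser of $gG_v$ in $\Pi$ is exactly $gG_vg^{-1}$, and this subgroup acts freely and transitively on the vertices of $gG_v$. Hence the rotative-stabiliser of the clique $gG_v$ is the conjugate $gG_vg^{-1}$.

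The central step is to show that any two cliques contained in the same hyperplane $J$ share the same rotative-stabiliser. By the definition of hyperplanes, the two cliques are connected by a chain in which consecutive entries appear as opposite edges of a common convex even cycle, so by induction it is enough to settle that base case. After translating by an element of $\Pi$, such a convex even cycle can be taken of the form $(1,a,ab,aba,\ldots,b)$ of length $2k$ with $\lambda(\{u,v\})=k$, $a \in G_u \setminus \{1\}$, and $b \in G_v \setminus \{1\}$. The starting clique $G_u$ has rotative-stabiliser $G_u$, while the opposite edge lies in a clique of the form $v_k G_u$ or $v_k G_v$ (depending on the parity of $k$), where $v_k = \underbrace{aba\cdots}_k = \underbrace{bab\cdots}_k$ via the braid-type relation. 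For $k=2$, the defining relation is the commutation $ab=ba$, whence $v_k G_u v_k^{-1} = G_u$ immediately. For $k \geq 3$, the definition of a periagroup forces $G_u$ and $G_v$ to have order two, and a short computation inside the dihedral-like subgroup $\langle a,b \mid a^2,b^2, (ab)^k\rangle$ (the standard fact that the longest element conjugates the two reflections to themselves when $k$ is even and swaps them when $k$ is odd) yields the desired equality of rotative-stabilisers. Denote the common subgroup by $R_J$.

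With $R_J$ in hand, the identification $\mathrm{stab}_\circlearrowright(J) = R_J$ is straightforward: $R_J$ stabilises every clique of $J$ by construction, so $R_J \subseteq \mathrm{stab}_\circlearrowright(J)$; conversely, any element of $\mathrm{stab}_\circlearrowright(J)$ stabilises a fixed clique $C_0 \subseteq J$ as a set, hence lies in its rotative-stabiliser $R_J$. Finally, Theorem~\ref{thm:BigHyp}(i) asserts that each sector of $J$ contains exactly one vertex of $C_0$, which gives an $R_J$-equivariant bijection between $C_0$ and the set of sectors of~$J$. Since $R_J$ acts freely and transitively on $C_0$ by definition of the rotative-stabiliser of a clique, the same holds on the set of sectors, as required.

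The main obstacle in this plan is the ``dihedral computation'' at the base of the induction: verifying that conjugation by the longest word $v_k$ sends the correct vertex-group back to $G_u$. This is where the braid relation of the periagroup is used in an essential way, and it is also the point where the order-two assumption on $G_u,G_v$ for $k \geq 3$ becomes indispensable, because otherwise conjugation by $v_k$ would not preserve the vertex-groups as subsets.
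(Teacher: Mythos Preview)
The paper does not prove this lemma at all: it is quoted verbatim from \cite[Corollary~6.4]{Mediangle} and marked with a \qed. So there is no ``paper's proof'' to compare against; your proposal supplies a self-contained argument where the paper simply imports the result.

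Your argument is essentially correct and is the natural direct proof. Two small points deserve tightening. First, the equivalence relation defining a hyperplane also identifies edges lying in a common $3$-cycle, not only opposite edges in convex even cycles; you should remark that, thanks to the absence of induced $K_4^-$, two edges sharing a $3$-cycle lie in the same clique, so this step does not change the clique and the induction may indeed focus on the even-cycle case. Second, you assert that after translation every convex even cycle has the form $(1,a,ab,\ldots,b)$ with $a\in G_u$, $b\in G_v$ and length $2\lambda(\{u,v\})$; this is true, but it is a structural fact about $M(\Gamma,\mathcal G,\lambda)$ (the convex even cycles are exactly the cosets of the dihedral-type parabolic subgroups $\langle G_u,G_v\rangle$ for adjacent $u,v$) that you are using without justification. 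In the paper this is taken for granted from \cite{Mediangle}, so in context it is acceptable, but it is the one genuinely non-formal input in your plan. The dihedral conjugation check and the passage from the clique $C_0$ to the sectors via Theorem~\ref{thm:BigHyp}(i) are both fine.
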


\noindent
We are now ready to prove the following general criterion, which is of independent interest. Here, given a group $G$ acting on a (mediangle) graph $X$, a \emph{gated-cocompact} subgroup refers to a subgroup $H \leq G$ for which there exists some gated subgraph $Y \subset X$ on which $H$ acts cocompactly.

\begin{lemma}\label{lem:GatedCocompact}
Let $\Pi:= \Pi(\Gamma, \mathcal{G},\lambda)$ be a periagroup. Gated-cocompact subgroups of $\Pi$ are virtual retracts. Consequently, if $\Pi$ is residually finite, then gated-cocompact subgroups are separable.
\end{lemma}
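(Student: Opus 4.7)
The plan is to adapt Haglund--Wise's canonical-completion strategy to the mediangle setting. Fix a gated subgraph $Y \subseteq M := M(\Gamma, \mathcal{G}, \lambda)$ on which $H$ acts cocompactly, and a basepoint $y_0 \in Y$. Because $Y$ is gated, the gate map $\pi \colon M \to Y$ is $H$-equivariant, and every hyperplane $J$ of $M$ either crosses $Y$ or lies in a unique sector $\sigma_Y(J)$ containing $Y$. Given a vertex $x \in M$ and a hyperplane $J$ separating $x$ from $Y$, Lemma~\ref{lem:RotativeStabPeria} yields a unique element $\tau \in \mathrm{stab}_\circlearrowright(J)$ sending the sector of $x$ to $\sigma_Y(J)$, and Lemma~\ref{lem:RotateCloser} guarantees $d(\tau x, Y) < d(x, Y)$. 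Iterating this operation starting at $g y_0$ terminates in finitely many steps with a vertex lying in $Y$, producing an assignment $g \mapsto \kappa_g \in \Pi$ (a product of such rotations) with $\kappa_g g \cdot y_0 \in Y$.

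Next I would promote the assignment $g \mapsto \kappa_g g$ to a homomorphic retraction $\rho\colon \Pi' \twoheadrightarrow H$ on a finite-index subgroup $\Pi'$ containing $H$. Cocompactness of $H \curvearrowright Y$ ensures that only finitely many $H$-orbits of hyperplanes enter the ``boundary layer'' around $Y$; for each such orbit I would select a finite-index subgroup of the rotative-stabiliser whose elements interact predictably with $H$, and assemble them into a finite-index normal subgroup $N \triangleleft \Pi$. Setting $\Pi' := HN$, one defines $\rho(g)$ as the unique element of $H$ obtained by pushing $gy_0$ back along a canonical geodesic to the $H$-orbit of $y_0$ in $Y$ via the rotations $\tau$, with $\rho|_H = \mathrm{id}$ by construction.

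The hard part will be verifying that $\rho$ is a homomorphism rather than a mere set map: the iterative construction makes choices among competing hyperplanes, and well-definedness amounts to compatibility between rotations about transverse or tangent hyperplanes. I expect to reduce this to two geometric facts already available. First, Lemma~\ref{lem:NotRight} implies that non-right hyperplanes delimit exactly two sectors, so the outside-to-inside rotation is canonical with no ambiguity. Second, Lemma~\ref{lem:RightAngledCommutation} ensures that rotations about right hyperplanes meeting perpendicularly commute. Combined with the gated property of right-hyperplane carriers (Lemma~\ref{lem:RightHypGated}) and the Cycle and Intersection-of-Even-Cycles axioms of Definition~\ref{def:Mediangle}, this should force the required compatibility of rotations.

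Finally, the separability consequence follows from the standard principle that a virtual retract of a residually finite group is separable: given $g \in \Pi \setminus H$, if $g \notin \Pi'$ then $g$ is separated from $H$ by the finitely many cosets of $\Pi'$, while if $g \in \Pi'$ then $\rho(g)^{-1} g \in \ker\rho \setminus \{1\}$ is detected in some finite quotient of $\Pi$, producing a finite quotient in which the image of $g$ avoids the image of $H$.
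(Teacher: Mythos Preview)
Your instinct to use rotative-stabilisers of hyperplanes to push points back into $Y$ is exactly right, but you are working much harder than necessary, and the ``hard part'' you flag is genuinely a gap in your argument as written.

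The paper's proof is a two-line application of a structural theorem: set $\mathrm{Rot}:=\langle \mathrm{stab}_\circlearrowright(J)\mid J\text{ tangent to }Y\rangle$ and $H^+:=\langle H,\mathrm{Rot}\rangle$. Then \cite[Theorem~6.1]{Mediangle} (whose proof is the ping-pong you are implicitly trying to reconstruct) gives directly that $H^+=\mathrm{Rot}\rtimes H$ and that $\mathrm{Rot}$ acts on $M$ with $Y$ as a strict fundamental domain. The semidirect product decomposition hands you the retraction $H^+\to H$ for free; cocompactness of $H\curvearrowright Y$ together with $Y$ being a fundamental domain for $\mathrm{Rot}\curvearrowright M$ forces $H^+$ to have finite index in $\Pi$. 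No iterative construction, no choice of canonical geodesics, no compatibility verification.

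Your plan to verify that $\rho$ is a homomorphism does not go through as stated. The tools you list (canonical two-sector rotation for non-right hyperplanes, commutation for right hyperplanes meeting at angle $\pi/2$) address the \emph{choice} of each individual rotation but not the order in which they are applied. Two non-right hyperplanes transverse at angle $\pi/n$ with $n>2$ have rotative-stabilisers generating a dihedral group, not a commutative one, so there is a genuine relation to track; your outline does not explain how the Cycle or Intersection-of-Even-Cycles axioms would resolve this. Moreover, the detour through ``finite-index subgroups of the rotative-stabiliser whose elements interact predictably with $H$'' and the normal subgroup $N$ is both unnecessary and unclear: the full rotative-stabilisers already behave well, and no passage to finite index inside them is needed. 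The clean way to package everything you want is precisely the semidirect product statement above; once you quote it, the proof collapses.
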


\begin{proof}
Let $H \leq \Pi$ be a gated-cocompact subgroup. Fix a gated subgraph $Y \subset M$ on which $H$ acts cocompactly. Let $\mathcal{J}$ denote the set of the hyperplanes tangent to $Y$ (i.e.\ containing edges with a single endpoint in $Y$). Set $\mathrm{Rot}:= \langle \mathrm{stab}_\circlearrowright (J), J \in \mathcal{J} \rangle$. According to \cite[Theorem~6.1]{Mediangle} and its proof (which apply thanks to Lemma~\ref{lem:RotativeStabPeria}), the subgroup $H^+:= \langle H, \mathrm{Rot} \rangle$ splits as a semi-direct product $\mathrm{Rot} \rtimes H$ and $\mathrm{Rot}$ acts on $M$ with $Y$ as a fundamental domain. The semi-direct product decomposition of $H^+$ immediately implies that $H$ is a retract in $H^+$. Next, since $H$ acts cocompactly on $Y$, necessarily $H^+$ acts cocompactly on $M$, which implies that $H^+$ has finite index in $\Pi$. Thus, we have proved that $H$ is a retract in the finite-index subgroup $H^+$. 
\end{proof}

\noindent
Proposition~\ref{prop:ParabolicSeparable} now easily follows from the observation, made in \cite[Corollary~6.6]{Mediangle}, that standard parabolic subgroups are gated-cocompact.

\begin{lemma}\label{lem:ParabolicConvex}
Let $\Pi:= \Pi(\Gamma,\mathcal{G},\lambda)$ be a periagroup. Let $M:= M(\Gamma, \mathcal{G},\lambda)$ its mediangle graph. For every $\Xi \subset V(\Gamma)$, the subgraph $\langle \Xi \rangle \subset M$ is gated. \qed
\end{lemma}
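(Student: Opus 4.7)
The plan is to invoke the local characterisation of gatedness recalled in the proof of Lemma~\ref{lem:RightHypGated}, coming from \cite[Proposition~6.5]{Mediangle}: a connected subgraph $Y$ of a mediangle graph is gated as soon as every $3$-cycle having an edge in $Y$ lies entirely in $Y$, and every convex even cycle having two consecutive edges in $Y$ lies entirely in $Y$. I will verify these three conditions for the induced subgraph $\langle \Xi \rangle \subset M$.

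Connectedness is immediate: $\langle \Xi \rangle$ is generated (as a group) by $\bigcup_{u \in \Xi} G_u$, and each $G_u$ spans a clique at $1$ in $M$. Since $\langle \Xi \rangle$ is closed under right multiplication by these generators, every element of $\langle \Xi \rangle$ can be joined to $1$ by an edge-path staying inside $\langle \Xi \rangle$.

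For the two local conditions, I would use the standard description of small cycles in the mediangle Cayley graph (already exploited in Lemmas~\ref{lem:NoObsGP} and~\ref{lem:CarrierRightHypPeriagroup}): every $3$-cycle of $M$ is a $\Pi$-translate of a triangle $(1,r,s)$ with $r,s$ distinct non-trivial elements of a single vertex-group $G_w$, and every convex even cycle of $M$ is a $\Pi$-translate of a $2\lambda(\{u,v\})$-cycle $(1,r,rs,rsr,\ldots)$ for non-trivial $r \in G_u$, $s \in G_v$ with $\{u,v\} \in E(\Gamma)$. If a $3$-cycle has an edge in $\langle \Xi \rangle$, the corresponding generator lies in $G_w \cap \langle \Xi \rangle$; injectivity of standard parabolics forces $w \in \Xi$, and the whole triangle lies in a single coset of $G_w \subset \langle \Xi \rangle$. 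Similarly, if a convex even cycle has two consecutive edges in $\langle \Xi \rangle$, the two corresponding labels lie respectively in $G_u \cap \langle \Xi \rangle$ and $G_v \cap \langle \Xi \rangle$, forcing $u,v \in \Xi$, and the whole cycle is contained in a coset of $\langle \{u,v\} \rangle \subset \langle \Xi \rangle$.

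The only delicate ingredient is the triviality $G_w \cap \langle \Xi \rangle = \{1\}$ whenever $w \notin \Xi$, which is the standard injectivity property of standard parabolic subgroups in a periagroup. Granted this, the verification of the two local conditions is a routine case analysis from the classification of $3$-cycles and convex even cycles in $M$.
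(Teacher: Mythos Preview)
The paper does not actually prove this lemma; it is imported from \cite[Corollary~6.6]{Mediangle}, as signalled by the \qed\ appended to the statement and the sentence immediately preceding it. Your direct argument via the local-gatedness criterion of \cite[Proposition~6.5]{Mediangle} is correct and is presumably close to the proof given in \cite{Mediangle} itself. Two remarks: the classification of convex even cycles you invoke is justified by the \emph{Intersection of Even Cycles} axiom in Definition~\ref{def:Mediangle} (two consecutive edges lie on at most one convex even cycle, and the dihedral $\langle u,v\rangle$-cycle is one such), and the injectivity statement $G_w\cap\langle\Xi\rangle=\{1\}$ for $w\notin\Xi$ that you flag is indeed the only algebraic input, following from the retraction $\Pi\to\langle\Xi\rangle$ that kills the vertex-groups outside~$\Xi$.
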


\begin{proof}[Proof of Proposition~\ref{prop:ParabolicSeparable}.]
It follows from Lemma~\ref{lem:ParabolicConvex} that standard parabolic subgroups are gated-cocompact, so our proposition follows from Lemma~\ref{lem:GatedCocompact}.
\end{proof}

\noindent
The second step towards the proof of Theorem~\ref{thm:DoubleCoset} is the following observation:

\begin{lemma}\label{lem:CrossSeparable}
Let $\Pi:= (\Gamma,\mathcal{G},\lambda)$ be a periagroup and let $M:= M(\Gamma, \mathcal{G},\lambda)$ denote its mediangle graph. Assume that $\Gamma$ is finite and that $\Pi$ is residually finite. For all hyperplanes $J,H$ of $M$,
$$\mathrm{Cross}(J,H):= \{ g \in \Pi \mid gJ \text{ transverse to } H\}$$
is separable in $\Pi$. 
\end{lemma}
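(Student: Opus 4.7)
The plan is to express $\mathrm{Cross}(J,H)$ as a finite union of translates of products of hyperplane stabilisers, and then to identify each such product with (a finite union of cosets of) a gated-cocompact subgroup in order to invoke Lemma~\ref{lem:GatedCocompact}.

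I would first reformulate transversality as an edge condition: $gJ$ is transverse to $H$ if and only if $gJ\neq H$ and $gJ$ contains an edge lying in $N(H)\setminus H$. Indeed, any such edge belongs, by the very definition of the carrier, to a convex even cycle that also contains an edge of $H$, so that $gJ$ and $H$ both cross that cycle and are therefore transverse; the converse is clear from the definition of transversality.

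Next, because $\Gamma$ is finite, the stabiliser $\mathrm{stab}(H)$ acts cocompactly on $N(H)$, so there are only finitely many $\mathrm{stab}(H)$-orbits of edges in $N(H)\setminus H$. Pick representatives $e_1,\ldots,e_n$, let $K_i$ be the hyperplane containing $e_i$, and restrict to those indices with $K_i\in\Pi\cdot J$, choosing $g_i\in\Pi$ with $g_iJ=K_i$. Any $g\in\mathrm{Cross}(J,H)$ then satisfies $gJ=hK_i$ for some $h\in\mathrm{stab}(H)$ and some such $i$, whence $g\in\mathrm{stab}(H)\,g_i\,\mathrm{stab}(J)$. This yields the decomposition
\[
\mathrm{Cross}(J,H) \;=\; \bigcup_{i}\ \mathrm{stab}(H)\,g_i\,\mathrm{stab}(J).
\]

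It therefore suffices to prove the separability of each double coset $\mathrm{stab}(H)\,g_i\,\mathrm{stab}(J)=(\mathrm{stab}(H)\cdot\mathrm{stab}(K_i))\cdot g_i$. For this I would aim to produce a gated subgraph $Y_i\subset M$ whose setwise stabiliser $R_i$ contains both $\mathrm{stab}(H)$ and $\mathrm{stab}(K_i)$, acts cocompactly on $Y_i$, and is such that $\mathrm{stab}(H)\cdot\mathrm{stab}(K_i)$ is a finite union of cosets of $R_i$; Lemma~\ref{lem:GatedCocompact} together with residual finiteness then yields separability. A natural candidate for $Y_i$ is the gated hull of $N(H)\cup N(K_i)$, which is non-empty since $H$ and $K_i$ are transverse.

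The main obstacle is this last step: the product of two hyperplane stabilisers is not a subgroup in general, so one has to justify carefully that it is contained in, and covered by finitely many cosets of, such a gated-cocompact $R_i$. The right-angled case is immediate from Lemma~\ref{lem:RightAngledCommutation}, which forces the rotative stabilisers to commute; the general case likely requires the semi-direct product structure from the proof of Lemma~\ref{lem:GatedCocompact} applied to the gated hull of $N(H)\cup N(K_i)$, together with the observation that a common convex even cycle crossed by $H$ and $K_i$ is gated and carries a tightly controlled stabiliser in $\mathrm{stab}(H)\cap\mathrm{stab}(K_i)$.
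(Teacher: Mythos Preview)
Your decomposition of $\mathrm{Cross}(J,H)$ into finitely many double cosets $\mathrm{stab}(H)\,g_i\,\mathrm{stab}(J)$ is correct, but the final step is a genuine gap rather than a technicality. You need separability of products $\mathrm{stab}(H)\cdot\mathrm{stab}(K_i)$ for transverse hyperplanes $H,K_i$, and the candidate you propose --- the gated hull of $N(H)\cup N(K_i)$ --- does not help in general. For non-right hyperplanes the carriers need not even be gated, and there is no reason the stabiliser of the gated hull should contain $\mathrm{stab}(H)\cdot\mathrm{stab}(K_i)$ as a subset of finite index; already in the Coxeter case a hyperplane stabiliser is the centraliser of a reflection, not a parabolic, and the product of two such centralisers has no evident description as cosets of a gated-cocompact subgroup. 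The semi-direct product structure from Lemma~\ref{lem:GatedCocompact} produces retractions onto a single gated-cocompact subgroup, not onto a product of two.

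The paper avoids this entirely by an algebraic trick. Fix non-trivial $r\in\mathrm{stab}_\circlearrowright(J)$ and $s\in\mathrm{stab}_\circlearrowright(H)$. If $gJ$ and $H$ are not transverse, a ping-pong argument shows $\langle grg^{-1},s\rangle$ is a free product, so no non-trivial relation holds between them. If they are transverse, meeting in a convex $2n$-cycle, then either $[grg^{-1},s]=1$ (when $n=2$) or $(grg^{-1}s)^n=1$ (when $n>2$, via the dihedral parabolic of that cycle). Taking $N$ a common multiple of all edge labels of $\Gamma$, one gets
\[
\mathrm{Cross}(J,H)=\alpha^{-1}(1)\cup\beta^{-1}(1),\qquad \alpha(g)=(grg^{-1}s)^N,\quad \beta(g)=[grg^{-1},s].
\]
Both $\alpha$ and $\beta$ are word maps, hence profinitely continuous, and $\{1\}$ is closed by residual finiteness. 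This bypasses any analysis of double cosets of hyperplane stabilisers; the point you are missing is that transversality in a mediangle graph is detected by a \emph{relation}, not just by a coset condition.
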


\begin{proof}
Let $g \in \Pi$ be an element. Notice that, if $gJ$ and $H$ are not transverse, then a simple ping-pong argument shows that 
$$\langle \mathrm{stab}_\circlearrowright(J), \mathrm{stab}_\circlearrowright(H) \rangle = \mathrm{stab}_\circlearrowright(J) \ast \mathrm{stab}_\circlearrowright(H).$$
Otherwise, if $gJ$ and $H$ are transverse, then we can find non-trivial relations between their rotative-stabilisers. Fix two non-trivial elements $r \in \mathrm{stab}_\circlearrowright(J)$ and $s \in \mathrm{stab}_\circlearrowright(H)$. Also, fix a convex $2n$-cycle $C$ in which $gJ$ and $H$ meet. If $n>2$, then $grg^{-1}$ and $s$ must be two reflections in a standard parabolic subgroup (corresponding to the cycle $C$) isomorphic to a dihedral group of order $2n$. In particular, $(grg^{-1}s)^n=1$. And, if $n=2$, then $grg^{-1}$ and $s$ must commute. Therefore, if $N$ denotes a common multiple of all the labels of the edges of $\Gamma$, then either $(grg^{-1}s)^N=1$ or $[r,s]=1$. 

\medskip \noindent
From the previous observations, it follows that $\mathrm{Cross}(J,H)= \alpha^{-1}(1) \cup \beta^{-1}(1)$ where
$$\alpha : \left\{ \begin{array}{ccc} G& \to & G \\ g & \mapsto & (grg^{-1}s)^N \end{array} \right. \text{ and } \beta : \left\{ \begin{array}{ccc} G & \to & G \\ g & \mapsto & [grg^{-1},s] \end{array} \right..$$
These two maps are continuous with respect to the profinite topology; and, since $\Pi$ is residually finite, $\{1\}$ must be closed. Thus, as the union of two closed subspaces $\alpha^{-1}(1)$ and $\beta^{-1}(1)$, we conclude that $\mathrm{Cross}(J,H)$ is separable in $\Pi$, as desired.
\end{proof}

\begin{proof}[Proof of Theorem~\ref{thm:DoubleCoset}.]
If $\Phi = \Psi$, the desired conclusion follows from Proposition~\ref{prop:ParabolicSeparable}. From now on, we assume that $\Phi \neq \Psi$. Let $(\Gamma^+,\lambda^+)$ denote the labelled graph obtained from $(\Gamma,\lambda)$ by adding two new vertices, $u_\Phi$ and $u_\Psi$; by connecting $u_\Phi$ (resp.\ $u_\Psi$) to all the vertices in $\Phi$ (resp.\ $\Psi$) with edges labelled by $2$; and by connecting $u_\Phi$ and $u_\Psi$ with an edge labelled by $2$. Let $\mathcal{G}^+$ be the collection of groups indexed by $V(\Gamma^+)$ such that the group indexed by a vertex in $\Gamma$ coincides with the group from $\mathcal{G}$ labelled by the same vertex and such that the the groups indexed by $u_\Phi$ and $u_\Psi$ are cyclic of order two. Let $\Pi^+$ denote the periagroup $\Pi(\Gamma^+, \mathcal{G}^+,\lambda^+)$. 

\begin{claim}\label{claim:CrossDoubleCoset}
Let $J_\Phi$ (resp.\ $J_\Psi$) denote the hyperplane of $M^+:=M(\Gamma^+,\mathcal{G}^+,\lambda^+)$ containing the edge $[1,u_\Phi]$ (resp.\ $[1,u_\Psi]$). In $\Pi^+$, $\mathrm{Cross}(J_\Psi,J_\Phi)= \langle \Phi \rangle \langle \Psi \rangle \langle u_\Phi,u_\Psi \rangle$.
\end{claim}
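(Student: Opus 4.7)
The plan is to use the geometry of the mediangle graph $M^+$ to translate the transversality condition on $gJ_\Psi$ and $J_\Phi$ into an explicit group-theoretic decomposition of $g$.

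First I would observe that both $J_\Phi$ and $J_\Psi$ are right hyperplanes: by construction of $(\Gamma^+,\lambda^+)$, every edge of $\Gamma^+$ incident to $u_\Phi$ or to $u_\Psi$ is labelled by~$2$, so Lemma~\ref{lem:NoObsGP} applies. Since a right hyperplane can only be transverse to another at angle $\pi/2$, Proposition~\ref{prop:SpanningACycle} forces any common convex even cycle to be a $4$-cycle. Hence $gJ_\Psi$ is transverse to $J_\Phi$ if and only if there exists a convex $4$-cycle $C$ whose two pairs of opposite edges lie respectively in $J_\Phi$ and in $gJ_\Psi$.

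Second I would parameterise such a cycle. Any convex $4$-cycle of $M^+$ is a $\Pi^+$-translate of a standard square $(1,r,rs,s)$ with $r,s$ non-trivial elements of commuting vertex-groups of $\Gamma^+$; since every edge of $J_\Phi$ (resp.\ $J_\Psi$) is labelled by the unique non-trivial element $u_\Phi$ (resp.\ $u_\Psi$) of its clique, we are forced into $\{r,s\}=\{u_\Phi,u_\Psi\}$, so $C = (x,\, xu_\Phi,\, xu_\Phi u_\Psi,\, xu_\Psi)$ for some $x\in\Pi^+$. Invoking Lemma~\ref{lem:CarrierRightHypPeriagroup} to identify the cliques of each hyperplane, the two $u_\Phi$-edges of $C$ lie in $J_\Phi$ exactly when $x \in \langle \mathrm{link}_{\Gamma^+}(u_\Phi)\rangle = \langle \Phi\cup\{u_\Psi\}\rangle$, and the two $u_\Psi$-edges lie in $gJ_\Psi$ exactly when $g^{-1}x \in \langle \mathrm{link}_{\Gamma^+}(u_\Psi)\rangle = \langle \Psi\cup\{u_\Phi\}\rangle$. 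Solving for $g$ yields the intermediate description
\[
\mathrm{Cross}(J_\Psi, J_\Phi) \;=\; \langle \Phi\cup\{u_\Psi\}\rangle \cdot \langle \Psi\cup\{u_\Phi\}\rangle.
\]

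Finally I would recognise this double coset as $\langle\Phi\rangle\langle\Psi\rangle\langle u_\Phi, u_\Psi\rangle$. The inclusion $\supseteq$ is straightforward: for $\phi\in\langle\Phi\rangle$, $\psi\in\langle\Psi\rangle$ and $w=u_\Phi^a u_\Psi^b$, using that $u_\Psi$ centralises $\langle\Psi\rangle$ one rewrites $\phi\psi w = (\phi u_\Psi^b)\cdot(\psi u_\Phi^a)$, with the two factors lying in $\langle\Phi\cup\{u_\Psi\}\rangle$ and $\langle\Psi\cup\{u_\Phi\}\rangle$ respectively. The reverse inclusion is where I expect the main obstacle: a priori, an element of $\langle\Phi\cup\{u_\Psi\}\rangle$ may contain arbitrarily many $u_\Psi$-syllables interleaved with letters from $\Phi\setminus\Psi$ (with which $u_\Psi$ does not commute), so collecting all the occurrences of $u_\Phi$ and $u_\Psi$ from a product $\alpha\beta$ into a single trailing $\langle u_\Phi,u_\Psi\rangle$-factor requires exploiting the full set of commutations in $\Gamma^+$ -- namely that $u_\Phi$ centralises $\Phi\cup\{u_\Psi\}$ and that $u_\Psi$ centralises $\Psi\cup\{u_\Phi\}$ -- through a careful bookkeeping argument in the graph-product normal form of $\Pi^+$. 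This is the technical heart of the claim.
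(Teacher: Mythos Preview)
Your approach mirrors the paper's: both reduce membership in $\mathrm{Cross}(J_\Psi,J_\Phi)$ to lying in a product of carriers, and then attempt to rewrite that product as $\langle\Phi\rangle\langle\Psi\rangle\langle u_\Phi,u_\Psi\rangle$. Your intermediate identity $\mathrm{Cross}(J_\Psi,J_\Phi)=\langle\Phi\cup\{u_\Psi\}\rangle\,\langle\Psi\cup\{u_\Phi\}\rangle$ is correct (and in fact equals $\langle\mathrm{star}(u_\Phi)\rangle\,\langle\mathrm{star}(u_\Psi)\rangle$, since $u_\Phi\in\mathrm{link}(u_\Psi)$ and vice versa).

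You are right to flag the final rewriting as the obstacle, and your instinct is sound: the equality you are trying to prove is \emph{false} in general, so no amount of bookkeeping will close the gap. Take $\Gamma=\{v\}$ with a non-trivial vertex-group $G_v$, $\Phi=\{v\}$, $\Psi=\emptyset$. Then $\Gamma^+$ is the path $v\text{---}u_\Phi\text{---}u_\Psi$ and $\Pi^+\cong\langle u_\Phi\rangle\times(G_v*\langle u_\Psi\rangle)$. Here $\mathrm{star}(u_\Phi)=V(\Gamma^+)$, so $N(J_\Phi)=\Pi^+$ and every $u_\Phi$-edge lies in $J_\Phi$; it follows that $\mathrm{Cross}(J_\Psi,J_\Phi)=\Pi^+$. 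But $\langle\Phi\rangle\langle\Psi\rangle\langle u_\Phi,u_\Psi\rangle=G_v\cdot\langle u_\Phi,u_\Psi\rangle$, which misses $u_\Psi s u_\Psi$ for any $1\neq s\in G_v$. The paper's own argument stumbles at the same place: it records $N(J_\Phi)=\langle\Phi,u_\Phi\rangle$ (forgetting that $u_\Psi\in\mathrm{star}(u_\Phi)$) and then asserts $\langle\Phi,u_\Phi\rangle\langle\Psi,u_\Psi\rangle=\langle\Phi\rangle\langle\Psi\rangle\langle u_\Phi,u_\Psi\rangle$, which would require $u_\Phi$ to normalise $\langle\Psi\rangle$.

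None of this harms Theorem~\ref{thm:DoubleCoset}, because only the intersection $\mathrm{Cross}(J_\Psi,J_\Phi)\cap\Pi=\langle\Phi\rangle\langle\Psi\rangle$ is used afterwards, and that \emph{does} hold. From your description $g\in\langle\mathrm{star}(u_\Phi)\rangle\,\langle\mathrm{star}(u_\Psi)\rangle$, apply the retraction $\rho:\Pi^+\to\Pi$ killing $u_\Phi,u_\Psi$: it fixes $g\in\Pi$ while sending the two factors into $\langle\Phi\rangle$ and $\langle\Psi\rangle$ respectively, giving $g=\rho(g)\in\langle\Phi\rangle\langle\Psi\rangle$. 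The reverse inclusion is the easy one you already outlined. So rather than wrestling with normal forms, the right move is to weaken the claim to what the proof of the theorem actually needs.
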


\noindent
Let $g \in \mathrm{Cross}(J_\Psi, J_\Phi)$. We know from Lemma~\ref{lem:CarrierRightHypPeriagroup} that $N(J_\Phi)= \langle \Phi,u_\Phi \rangle$ and $N(J_\Psi)= \langle \Psi,u_\Psi \rangle$. Because $gJ_\Psi$ and $J_\Phi$ are transverse, and since carriers of hyperplanes are connected, there must exist a path connecting $1 \in N(J_\Phi)$ to $g \in N(gJ_\Psi)$ that stays in $N(J_\Phi)$ first and then stays in $N(gJ_\Psi)$. This implies that
$$g \in \langle \Phi,u_\Phi \rangle \langle \Psi,u_\Psi \rangle = \langle \Phi \rangle \langle \Psi \rangle \langle u_\Phi,u_\Psi \rangle,$$
as desired. Conversely, for all $a \in \langle \Phi \rangle$, $b \in \langle \Psi \rangle$, and $c \in \langle u_\Phi,u_\Psi \rangle$, we know that $J_\Phi= a^{-1} J_\Phi$ is transverse to $J_\Psi = bc J_\Psi$, so $abcJ_\Psi$ must be transverse $J_\Phi$, i.e.\ $abc \in \mathrm{Cross}(J_\Psi,J_\Phi)$. This concludes the proof of Claim~\ref{claim:CrossDoubleCoset}. 

\medskip \noindent
We know from Lemma~\ref{lem:CrossSeparable} that $\mathrm{Cross}(J_\Psi,J_\Phi)$ is separable in $\Pi^+$. Since $\Pi$ is a standard parabolic subgroup of $\Pi^+$, we also know from Proposition~\ref{prop:ParabolicSeparable} that $\Pi$ is separable in $\Pi^+$, so 
$$\mathrm{Cross}(J_\Psi,J_\Phi) \cap \Pi =  \langle \Phi \rangle \langle \Psi \rangle \langle u_\Phi,u_\Psi \rangle \cap \Pi =  \langle \Phi \rangle \langle \Psi \rangle$$
must be separable in $\Pi$, concluding the proof of our theorem. 
\end{proof}

\section{Conspicial actions of Coxeter groups}

\noindent
It is proved in \cite{MR2646113} that the actions of Coxeter groups on (the median graphs dual to) their mediangle Cayley graphs are conspicial, due to good separability properties of their hyperplane-stabilisers. In order to prove Theorem~\ref{thm:BigIntro}, we need to strengthen this observation. We introduce a notion of \emph{relative conspiciality} and we prove that the actions just mentioned are conspicial relative to a bigger family of subgraphs. First, we need to introduce some vocabulary. 

\begin{definition}
Let $X$ be a mediangle graph and $Y,Z \subset X$ two subgraphs.
\begin{itemize}
	\item $Y$ and $Z$ are \emph{transverse} if they are distinct and if there exists a convex even cycle with two distinct pairs of opposite edges respectively in $Y$ and $Z$.
	\item $Y$ and $Z$ are \emph{tangent} if they are distinct, not transverse, and not separated by a third hyperplane.
\end{itemize}
\end{definition}

\noindent
Notice that, when restricted to hyperplanes, we recover the usual definitions of transversality and tangency. Then, we can generalise conspicial actions:

\begin{definition}
Let $G$ be a group acting on a mediangle graph $X$ and let $\mathcal{P}$ be a collection of subgraphs. The action $G \curvearrowright X$ is \emph{conspicial relative to $\mathcal{P}$} if
\begin{itemize}
	\item for all $g \in G$ and $P \in \mathcal{P}$, if $gP \neq P$ then $P$ and $gP$ are separated by some hyperplane;
	\item for all $g \in G$ and $P,Q \in \mathcal{P}$, if $P$ is transverse to $Q$ then $gP$ is not tangent to $Q$.
\end{itemize}
\end{definition}

\noindent
Notice that conspicial actions, as defined in Section~\ref{section:Conspicial}, correspond to conspicial actions relative to the collection of hyperplanes. Our next statement strengthens the main result of \cite{MR2646113}. 

\begin{thm}\label{thm:CoxRelConspicial}
Let $\Gamma$ be a finite labelled graph and $W(\Gamma)$ the Coxeter group it defines. Let $\mathcal{P}$ denote the family of subgraphs in $M(\Gamma)$ given by the hyperplanes and by the cosets of the standard parabolic subgroups. The action $W(\Gamma) \curvearrowright M(\Gamma)$ is virtually conspicial relative to $\mathcal{P}$. 
\end{thm}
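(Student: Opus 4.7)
My plan is to reduce Theorem~\ref{thm:CoxRelConspicial} to Haglund--Wise's virtual conspiciality for hyperplanes \cite{MR2646113}, by enlarging the Coxeter group so that parabolic cosets appear as carriers of right hyperplanes. For each $\Xi \subseteq V(\Gamma)$ arising as the index set of a standard parabolic subgroup (finitely many, since $\Gamma$ is finite), I adjoin a new vertex $u_\Xi$ to $\Gamma$ connected to each vertex of $\Xi$ by an edge labelled $2$. This yields a finite labelled graph $\Gamma^+$, a Coxeter group $W(\Gamma^+)$ containing $W(\Gamma)$ as a standard parabolic subgroup, and a mediangle graph $M^+ := M(\Gamma^+)$ containing $M(\Gamma)$. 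For each $\Xi$, let $J_\Xi$ denote the right hyperplane of $M^+$ through $[1,u_\Xi]$; by Lemma~\ref{lem:CarrierRightHypPeriagroup}, $N(J_\Xi) = \langle \Xi\rangle \times \langle u_\Xi\rangle$, so any translate of the coset $h\langle \Xi\rangle$ inside $M(\Gamma)$ is faithfully encoded by the translate $hJ_\Xi$ of the right hyperplane in $M^+$.

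Applying \cite{MR2646113} to the Coxeter group $W(\Gamma^+)$ yields a finite-index subgroup $W_0^+ \leq W(\Gamma^+)$ whose action on $M^+$ is conspicial in the hyperplane sense. I set $W_0 := W_0^+ \cap W(\Gamma)$, which is of finite index in $W(\Gamma)$. I then verify that $W_0 \curvearrowright M(\Gamma)$ is conspicial relative to $\mathcal{P}$. Condition~1 for parabolic cosets is automatic: if $P = h\langle\Phi\rangle \in \mathcal{P}$ and $gP \neq P$, then $gP$ is a coset of the same subgroup $\langle\Phi\rangle$, hence disjoint from $P$; Lemma~\ref{lem:ParabolicConvex} ensures both are gated, so Lemma~\ref{lem:GatedDisjoint} provides a separating hyperplane. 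Condition~1 for hyperplanes of $M(\Gamma)$ follows from the conspiciality of $W_0^+$ on $M^+$, because hyperplanes of $M(\Gamma)$ extend canonically to hyperplanes of $M^+$.

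Condition~2 is established by translating the transversality and tangency relations between elements of $\mathcal{P}$ in $M(\Gamma)$ into the corresponding relations between hyperplanes in $M^+$: each parabolic coset $h\langle\Xi\rangle$ is replaced by the right hyperplane $hJ_\Xi$, while hyperplanes of $M(\Gamma)$ are left untouched. I intend to show that this replacement both preserves and reflects transversality and tangency, so that the hyperplane conspiciality of $W_0^+$ on $M^+$ directly yields the relative condition~2 for $W_0$ on $M(\Gamma)$: no $W_0$-translate of a transverse pair in $\mathcal{P}$ can become tangent.

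The hardest step will be establishing this precise dictionary. One must check that the new convex even cycles created in $M^+$ by the vertices $u_\Xi$ do not spuriously produce transversality or tangency relations that were absent in $M(\Gamma)$, and conversely that a genuine transversality (resp.\ tangency) of subgraphs in $M(\Gamma)$ is detected by transversality (resp.\ tangency) of the associated hyperplanes in $M^+$. One should also verify that the assignment $h\langle\Xi\rangle \mapsto hJ_\Xi$ is $W(\Gamma)$-equivariant in the required sense. Once this correspondence is secured, the theorem drops out of the classical Haglund--Wise virtual conspiciality applied to the enlarged Coxeter group $W(\Gamma^+)$.
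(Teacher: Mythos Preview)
Your approach is genuinely different from the paper's. The paper does not reduce to Haglund--Wise via an enlarged Coxeter group; it proves a self-contained criterion for virtual relative conspiciality (Proposition~\ref{prop:CriterionConspicial}) and checks its hypotheses by establishing that $\mathrm{stab}(P)$ is separable and that each $\mathrm{Cross}(P,Q)$ is profinitely closed (Lemma~\ref{lem:SeparabilityCoxeter}). Your idea of adjoining a commuting generator $u_\Xi$ does occur in the paper, but in Section~\ref{section:SeparabilityPeriagroups}, where it is used to prove double-coset separability rather than to transport conspiciality directly.

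There is a real obstacle in your dictionary, specifically in the mixed hyperplane/coset case. Suppose $gP$ is a hyperplane of $M(\Gamma)$ that has an edge $[x,xs]$ inside $Q=h\langle\Psi\rangle$ with $s\in\Psi$ isolated in the subgraph induced by $\Psi$. Then in $M^+$ the $4$-cycle on $s$ and $u_\Psi$ makes $gP^+$ \emph{transverse} to $hJ_\Psi$; but in $M(\Gamma)$ there is no convex even cycle in $Q$ through $[x,xs]$ (since $s$ has no neighbour in $\Psi$), so $gP$ is \emph{tangent} to $Q$ according to the paper's definition: it is not transverse, and it certainly is not separated from $Q$ since it crosses $Q$. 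Hence tangency in $M(\Gamma)$ does not force tangency in $M^+$, and the Haglund--Wise conclusion ``$gP^+$ not tangent to $hJ_\Psi$'' is compatible with $gP$ being tangent to $Q$; you obtain no contradiction. A second, milder, issue is that the vertices $u_\Phi$ you introduce are pairwise non-adjacent, so no two hyperplanes $J_\Phi$, $J_\Psi$ are ever transverse in $M^+$; thus the dictionary also fails for transverse pairs of parabolic cosets. That particular case happens to be vacuous for condition~2 (if two parabolic cosets are transverse then $\Phi\cap\Psi$ contains an edge of $\Gamma$, and any translate meeting $Q$ is automatically transverse to it, while a translate disjoint from $Q$ is separated by Lemma~\ref{lem:GatedDisjoint}), but your write-up would need to isolate it explicitly rather than route it through the dictionary.
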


\noindent
The theorem is based on the following criterion, which is a natural analogue of \cite[Theorem~4.1]{MR2646113} and whose proof can be rewritten almost word for word. We write a complete argument for the reader's convenience. In our statement, we use the notations
$$\mathrm{Cross}(P,Q):= \{ g \in G \mid gP \text{ transverse to } Q \}$$
and 
$$\mathrm{Osc}(P,Q) := \{ g \in G \mid gP \text{ tangent to } Q\}$$
for all subgraphs $P$ and $Q$. 

\begin{prop}\label{prop:CriterionConspicial}
Let $G$ be a group acting on a mediangle graph $X$ and $\mathcal{P}$ a $G$-invariant collection of subgraphs. Assume that:
\begin{itemize}
	\item there are finitely many $G$-orbits in $\mathcal{P}$;
	\item for every $P \in \mathcal{P}$, there are finitely many $\mathrm{stab}(P)$-orbits of $Q \in \mathcal{P}$ transverse or tangent to $P$;
	\item for all transverse $P,Q \in \mathcal{P}$, the profinite closure of $\mathrm{stab}(Q) \mathrm{stab}(P)$ is disjoint from $\mathrm{Osc}(P,Q)$;
	\item for every $P \in \mathcal{P}$, the profinite closure of $\mathrm{stab}(P)$ is disjoint from $\mathrm{Cross}(P,P) \cup \mathrm{Osc}(P,P)$.
\end{itemize}
Then $G$ contains a finite-index subgroup whose action on $X$ is conspicial relative to $\mathcal{P}$. 
\end{prop}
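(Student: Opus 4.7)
The plan is to construct a finite-index normal subgroup $G_0 \trianglelefteq G$ as a finite intersection of finite-index normal subgroups, each arranged to avoid a single ``bad'' double coset, and then to verify the two conditions of conspiciality relative to $\mathcal{P}$ by lifting from orbit representatives. First I would use the two finiteness hypotheses to reduce the a priori infinite bad set to finitely many pieces. Fix $G$-orbit representatives $P_1, \ldots, P_m$ of $\mathcal{P}$; for each $j$, fix $\mathrm{stab}(P_j)$-orbit representatives $R_{j,1}, \ldots, R_{j,n_j}$ of those elements of $\mathcal{P}$ transverse to $P_j$. A routine orbit argument then shows that $\mathrm{Cross}(P_j,P_j) \cup \mathrm{Osc}(P_j,P_j)$ decomposes as a finite union $\bigcup_k \mathrm{stab}(P_j)\, g_{j,k}\, \mathrm{stab}(P_j)$, and each $\mathrm{Osc}(P_j, R_{j,l})$ decomposes as a finite union $\bigcup_p \mathrm{stab}(R_{j,l})\, t_{j,l,p}\, \mathrm{stab}(P_j)$.

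For each representative $g_{j,k}$ and each $t_{j,l,p}$, I would then use the fourth and third hypotheses respectively to extract a finite-index normal subgroup of $G$ avoiding the corresponding double coset. The key technical manipulation is that, for subgroups $A, B \leq G$ and a finite-index normal $K \trianglelefteq G$, the condition $K \cap AgB = \emptyset$ is equivalent to $g \notin ABK$ (using $KA = AK$ from normality and $AA = A$, $BB = B$); hence the existence of such a $K$ is equivalent to $g \notin \overline{AB}$. Applied to the fourth hypothesis, this yields $K_{j,k}$ disjoint from $\mathrm{stab}(P_j)\, g_{j,k}\, \mathrm{stab}(P_j)$; applied to the third hypothesis, together with one additional absorption step pushing an extra $\mathrm{stab}(P_j)$ on the left into the same argument, it yields $K_{j,l,p}$ disjoint from $\mathrm{stab}(P_j)\, \mathrm{stab}(R_{j,l})\, t_{j,l,p}\, \mathrm{stab}(P_j)$. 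Setting $G_0 := \bigcap_{j,k} K_{j,k} \cap \bigcap_{j,l,p} K_{j,l,p}$ produces the desired finite-index normal subgroup.

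Finally I would verify the two relative conspiciality conditions for $G_0 \curvearrowright X$ by translating back to orbit representatives via normality of $G_0$. For condition (a), writing $P = \gamma P_j$, transversality or tangency of $gP$ and $P$ amounts to $\gamma^{-1} g \gamma \in \mathrm{Cross}(P_j, P_j) \cup \mathrm{Osc}(P_j, P_j)$; combined with $\gamma^{-1} g \gamma \in G_0$, this contradicts the construction of the $K_{j,k}$. For condition (b), writing $P = \gamma P_j$ and $Q = \gamma s R_{j,l}$ with $s \in \mathrm{stab}(P_j)$, tangency of $gP$ to $Q$ translates to $s^{-1} \gamma^{-1} g \gamma \in \mathrm{Osc}(P_j, R_{j,l})$, hence $\gamma^{-1} g \gamma \in \mathrm{stab}(P_j)\, \mathrm{stab}(R_{j,l})\, t_{j,l,p}\, \mathrm{stab}(P_j)$ for some $p$, again contradicting the construction. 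I expect the main obstacle to be the combinatorial bookkeeping: parametrising transverse pairs $(P,Q) \in \mathcal{P}^2$ through orbit representatives, and absorbing the extra $\mathrm{stab}(P_j)$ factor that appears on the left when unfolding $Q = \gamma s R_{j,l}$. Once this is in place, the profinite-closure-to-double-coset equivalence above allows the two separability hypotheses to plug in directly, and the argument follows the Haglund--Wise template from \cite{MR2646113}.
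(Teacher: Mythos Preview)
Your proposal is correct and follows essentially the same Haglund--Wise template as the paper's proof: reduce to finitely many double cosets via the two finiteness hypotheses, use the separability hypotheses to find a normal finite-index subgroup missing each one, intersect, and lift back via normality. One minor simplification the paper uses: in verifying condition~(b), conjugating by $\gamma s$ rather than just $\gamma$ (legitimate since $sP_j=P_j$) gives $(\gamma s)^{-1}g(\gamma s)\in \mathrm{Osc}(P_j,R_{j,l})\cap G_0$ directly, so your ``extra absorption step'' and the enlarged double coset $\mathrm{stab}(P_j)\,\mathrm{stab}(R_{j,l})\,t\,\mathrm{stab}(P_j)$ are not needed.
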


\begin{proof}
Let $P_1, \ldots, P_n \in \mathcal{P}$ be representatives modulo the action of $G$. We start by proving that $G$ contains a finite-index $A$ such that $gP$ and $P$ are never transverse for $P \in \mathcal{P}$ and $g \in A$.

\medskip \noindent
First, given an index $1 \leq i \leq n$, we construct a normal finite-index subgroup $A_i \lhd G$ as follows. Notice that there exists a finite set $J \subset G$ such that $\mathrm{Cross}(P_i,P_i)= \mathrm{stab}(P_i)J \mathrm{stab}(P_i)$. This follows from the fact that there are only finitely many $\mathrm{stab}(P_i)$-orbits of elements of $\mathcal{P}$ transverse to $P_i$. By assumption, the closure $F_i$ of $\mathrm{stab}(P_i)$ with respect to the profinite topology is disjoint from $\mathrm{Cross}(P_i)$, and a fortiori from $J$. Thus, for every $j \in J$, there exists a finite-index normal subgroup $A_j \lhd G$ such that $jA_j \cap F_i = \emptyset$. Setting $A_i:= \bigcap_{j \in J} A_j$, we get finite-index normal subgroup of $G$ satisfying $JA_i \cap F_i = \emptyset$.

\begin{claim}\label{claim:AiVScross}
$A_i \cap \mathrm{Cross}(P_i,P_i) = \emptyset$.
\end{claim}

\noindent
Assume that $A_i$ intersects $\mathrm{Cross}(P_i,P_i)= \mathrm{stab}(P_i) J \mathrm{stab}(P_i)$. Then, there exist $j \in J$ and $p,q \in \mathrm{stab}(P_i)$ such that $pjq \in A_i$. Because $A_i$ is normal, this amounts to saying that $qpj \in A_i$. Or equivalently, $p^{-1}q^{-1} \in jA_i$. It follows that $\mathrm{stab}(P_i)$, and a fortiori its closure $F_i$, intersects $JA_i$. This contradicts the definition of $A_i$, and concludes the proof of Claim~\ref{claim:AiVScross}.

\medskip \noindent
Now, set $A:= A_1 \cap \cdots \cap A_n$. Given a $P \in \mathcal{P}$, there must exist $1 \leq i \leq n$ and $h \in G$ such that $P=hP_i$. Then,
$$\mathrm{Cross}(P,P)= \mathrm{Cross}(hP_i,hP_i)= h \mathrm{Cross}(P_i,P_i) h^{-1}$$
must be disjoint from $A_i$, and a fortiori from $A$, as a consequence of Claim~\ref{claim:AiVScross} (and because $A_i$ is normal). 

\medskip \noindent
Thus, we have found a finite-index subgroup of $G$ for which $\mathrm{Cross}(P,P)$ (taken with respect to our finite-index subgroup) is empty for every $P \in \mathcal{P}$. The same argument with $\mathrm{Osc}$ replacing $\mathrm{Cross}$ shows that $G$ contains a finite-index subgroup for which $\mathrm{Osc}(P,P)$ (taken with respect to our finite-index subgroup) is empty for every $P \in \mathcal{P}$. We can also show $G$ contains a finite-index subgroup for which $\mathrm{Osc}(P,Q)$ (taken with respect to our finite-index subgroup) is empty for all transverse $P,Q \in \mathcal{P}$. The proof follows the same strategy, but we write a complete argument for the reader's convenience. Taking the intersection of our three finite-index subgroup then concludes the proof of our proposition.

\medskip \noindent
For every $1 \leq i \leq n$, let $Q^i_1, \ldots, Q_{m_i}^i$ be representatives of hyperplanes transverse to $P_i$ modulo the action of $\mathrm{stab}(P_i)$. For all $1 \leq i \leq n$ and $1 \leq j \leq m_i$, we construct a normal finite-index subgroup $A_{ij} \lhd G$ as follows. Because there are only finitely many $\mathrm{stab}(Q_i^j)$-orbits of elements of $\mathcal{P}$ tangent to $Q_i^j$, we can write $\mathrm{Osc}(P_i,Q_i^j)= \mathrm{stab}(Q_i^j) J \mathrm{stab}(P_i)$ for some finite subset $J \subset G$. By assumption, the closure $F_{ij}$ of $\mathrm{stab}(Q_i^j)\mathrm{stab}(P_i)$ with respect to the profinite topology is disjoint from $\mathrm{Osc}(P_i,Q_i^j)$, and a fortiori from $J$. Thus, as before, we can find a normal finite-index subgroup $A_{ij} \lhd G$ such that $JA_{ij} \cap F_{ij}= \emptyset$. 

\begin{claim}\label{claim:AijVSosc}
$A_{ij} \cap \mathrm{Osc}(P_i,Q_i^j)= \emptyset$.
\end{claim}

\noindent
Assume that $A_{ij}$ intersects $\mathrm{Osc}(P_i,Q_i^j)= \mathrm{stab}(Q_i^j) J \mathrm{stab}(P_i)$. Then, there exist $j \in J$, $q \in \mathrm{stab}(Q_i^j)$, and $p \in \mathrm{stab}(P_i)$ such that $qjp \in A_{ij}$. Since $A_{ij}$ is normal, this amounts to saying that $pqj \in A_{ij}$. Equivalently, $q^{-1}p^{-1} \in jA_{ij}$. This implies that $\mathrm{stab}(Q_i^j) \mathrm{stab}(P_i)$, and a fortiori its closure $F_{ij}$, intersects $JA_{ij}$. This contradicts the definition of $A_{ij}$, and concludes the proof of Claim~\ref{claim:AijVSosc}. 

\medskip \noindent
Now, we set
$$A:= \bigcap\limits_{1 \leq i \leq n} \bigcap\limits_{1 \leq j \leq m_i} A_{ij}.$$
Let $P,Q \in \mathcal{P}$ be two transverse subgraphs. There exist $1 \leq i \leq n$ and $h_1 \in G$ such that $P=h_1P_i$. Since $h_1^{-1}Q$ is transverse to $h_1^{-1}P=P_i$, there exist $1 \leq j \leq m_i$ and $h_2 \in \mathrm{stab}(P_i)$ such that $h_1^{-1}Q=h_2Q_i^j$. Then
$$\begin{array}{lcl} \mathrm{Osc}(P,Q) & = & \mathrm{Osc}(h_1P_i,Q)= h_1 \mathrm{Osc}(P_i,h_1^{-1}Q) h_1^{-1}=  h_1 \mathrm{Osc}(P_i,h_2Q_i^j ) h_1^{-1} \\ \\ & = &  h_1 \mathrm{Osc}(h_2P_i,h_2Q_i^j ) h_1^{-1} = h_1h_2 \mathrm{Osc}(P_i,Q_i^j) h_2^{-1}h_1^{-1} \end{array}$$
must be disjoint from $A_{ij}$, and a fortiori from $A$, as a consequence of Claim~\ref{claim:AijVSosc} (and because $A_{ij}$ is normal). 
\end{proof}

\noindent
Then, Theorem~\ref{thm:CoxRelConspicial} will be essentially a direct consequence of Proposition~\ref{prop:CriterionConspicial} combined with the next observation.

\begin{lemma}\label{lem:SeparabilityCoxeter}
Let $\Gamma$ be a finite labelled graph and $W(\Gamma)$ the Coxeter group it defines. Let $\mathcal{P}$ denote the family of subgraphs in $M(\Gamma)$ given by the hyperplanes and by the cosets of the standard parabolic subgroups. Then:
\begin{itemize}
	\item for every $P \in \mathcal{P}$, $\mathrm{stab}(P)$ is separable;
	\item for all $P,Q \in \mathcal{P}$ either equal, tangent, or transverse, $\mathrm{Cross}(P,Q)$ is closed in the profinite topology. 
\end{itemize}
\end{lemma}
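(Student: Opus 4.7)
The plan is to reduce both statements to Proposition~\ref{prop:ParabolicSeparable} and Theorem~\ref{thm:DoubleCoset}, using that Coxeter groups are residually finite (a classical theorem of Tits) so the hypotheses of these results are available throughout.

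For the first bullet, I would distinguish the two types of subgraphs in $\mathcal{P}$. If $P = g\langle \Xi \rangle$ is a coset of a standard parabolic, then the setwise stabiliser is $\mathrm{stab}(P) = g\langle\Xi\rangle g^{-1}$, a conjugate of the separable (by Proposition~\ref{prop:ParabolicSeparable}) subgroup $\langle\Xi\rangle$, hence itself separable since separability is preserved under conjugation. If $P = J$ is a hyperplane, then $\mathrm{stab}_\circlearrowright(J)$ is the order-two group generated by the reflection across $J$, and $\mathrm{stab}(J)$ is an index-two extension of the sector-preserving subgroup $\mathrm{stab}^+(J)$. The classical theory of reflection subgroups identifies $\mathrm{stab}^+(J)$ with a conjugate of a standard parabolic subgroup, which is separable by Proposition~\ref{prop:ParabolicSeparable}, so the finite extension $\mathrm{stab}(J)$ is separable as well. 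Alternatively, one may argue directly that $\mathrm{stab}(J)$ acts cocompactly on the carrier $N(J)$, verify gatedness of $N(J)$ by a direct adaptation of Lemma~\ref{lem:RightHypGated} (with convex cycles of length $>4$ replacing $4$-cycles where necessary), and apply Lemma~\ref{lem:GatedCocompact}.

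For the second bullet, fix $P,Q \in \mathcal{P}$ that are equal, tangent, or transverse. Because $\Gamma$ is finite, only finitely many $W(\Gamma)$-orbits of pairs in $\mathcal{P}\times \mathcal{P}$ realise each of these configurations, so one can decompose
$$\mathrm{Cross}(P,Q) \;=\; \bigcup_{i=1}^{n} \mathrm{stab}(Q)\, g_i\, \mathrm{stab}(P)$$
as a finite union of double cosets, with $g_1,\dots, g_n$ chosen so that $\{g_i P\}$ represents the $\mathrm{stab}(Q)$-orbits on those elements of $\mathcal{P}$ that lie in the $W(\Gamma)$-orbit of $P$ and are transverse to $Q$. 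It thus suffices to show each double coset is closed in the profinite topology. By the first bullet, each of $\mathrm{stab}(P)$ and $\mathrm{stab}(Q)$ is a finite extension of a conjugate of a standard parabolic. Consequently every double coset $\mathrm{stab}(Q)\, g\, \mathrm{stab}(P)$ decomposes further into a finite union of translates of double cosets of the form $\langle \Psi \rangle\, h\, \langle \Xi \rangle$ with $\Psi, \Xi \subset V(\Gamma)$, each of which is separable by Theorem~\ref{thm:DoubleCoset}. A finite union of separable subsets of a residually finite group is separable, concluding the argument.

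The main obstacle will be the bookkeeping in the second bullet: ensuring that the double-coset decomposition of $\mathrm{Cross}(P,Q)$ is genuinely finite (combining finiteness of $\Gamma$ with local finiteness of the $W(\Gamma)$-action on configurations in $\mathcal{P}$) and carefully pulling out the reflections when a stabiliser is that of a hyperplane rather than a parabolic coset. The identification of $\mathrm{stab}^+(J)$ with a (conjugate of a) standard parabolic is classical and will be invoked rather than reproved.
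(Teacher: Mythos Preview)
Your argument for the first bullet contains a genuine error. The assertion that the sector-preserving stabiliser $\mathrm{stab}^+(J)$ of a hyperplane is a conjugate of a standard parabolic subgroup is false in general. In the affine Coxeter group $\tilde{A}_2$ (all labels equal to $3$), the centraliser of a reflection is isomorphic to $\mathbb{Z}\times\mathbb{Z}/2$, so $\mathrm{stab}^+(J)\cong\mathbb{Z}$; but every proper parabolic subgroup of $\tilde{A}_2$ is finite. More generally, Brink's description of reflection centralisers in Coxeter groups shows they are semidirect products of a reflection subgroup with a free group, and are rarely parabolic. Your fallback via Lemma~\ref{lem:GatedCocompact} is also not justified as stated: Lemma~\ref{lem:RightHypGated} only proves gatedness of carriers for \emph{right} hyperplanes, and in a Coxeter group with labels $>2$ the relevant hyperplanes are not right; the suggested ``direct adaptation'' is not carried out and is not obviously available.

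This error propagates to the second bullet, since your reduction to Theorem~\ref{thm:DoubleCoset} goes through the (now unproven) decomposition of $\mathrm{stab}(J)$ as a finite extension of a parabolic. There is also a secondary gap: Theorem~\ref{thm:DoubleCoset} is stated only for $\langle\Phi\rangle\langle\Psi\rangle$, not for general translates $\langle\Phi\rangle h\langle\Psi\rangle$, and your reduction requires the latter. The paper avoids all of this structural analysis: for a hyperplane $J$ with reflection $r$, it writes $\mathrm{stab}(J)=\phi^{-1}(r)$ for the continuous map $\phi(g)=grg^{-1}$, and similarly expresses $\mathrm{Cross}(P,Q)$ as the preimage of a closed set under an explicit word map (e.g.\ $g\mapsto (grg^{-1}s)^N$ in the hyperplane--hyperplane case, $g\mapsto grg^{-1}$ in the hyperplane--parabolic case). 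This is both simpler and does not rely on any structure theorem for centralisers.
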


\begin{proof}
Let $P \in \mathcal{P}$ be a subgraph. First, assume that $P$ is a hyperplane. It is proved in \cite{MR2646113} that $\mathrm{stab}(P)$ is separable. Indeed, if $r$ denotes the reflection associated to $P$, then an element $g \in W(\Gamma)$ stabilises $g$ if and only $grg^{-1}=r$. Therefore, if $\phi : W(\Gamma) \to W(\Gamma)$ denotes the maps $g \mapsto grg^{-1}$, then $\phi^{-1}(r) = \mathrm{stab}(P)$. Since $\phi$ is clearly continuous with respect to the profinite-topology, and because the profinite topology is Hausdorff (as $W(\Gamma)$ is residually finite), it follows that $\mathrm{stab}(P)$ is separable. Next, if $P$ is a coset of a standard parabolic subgroup, the desired conclusion is given by Proposition~\ref{prop:ParabolicSeparable}.

\medskip \noindent
Let $P,Q \in \mathcal{P}$ be two subgraphs. First, assume that $P$ and $Q$ are two hyperplanes. Then it is proved in \cite{MR2646113} that $\mathrm{Cross}(P,Q)$ is closed in the profinite topology. Indeed,  if $r$ (resp.\ $s$) denotes the reflection associated to $P$ (resp.\ $Q$), then, given an element $g \in W(\Gamma)$, either $gP$ is transverse to $Q$, and $\langle grg^{-1},s \rangle$ is a finite dihedral group; or $gP$ is not transverse to $Q$, and $\langle grg^{-1},s \rangle$ is an infinite dihedral group. Therefore, $gP$ is transverse to $Q$ if and only if $grg^{-1}s$ has finite order. Fixing an large integer $N$ divisible by every finite order of an element of $W(\Gamma)$, it follows that $\mathrm{Cross}(P,Q)= \phi^{-1}(1)$ where $\phi : W(\Gamma) \to W(\Gamma)$ is defined by $g \mapsto (grg^{-1}s)^N$. Since $\phi$ is clearly continuous with respect to the profinite-topology, and because the profinite topology is Hausdorff (as $W(\Gamma)$ is residually finite), it follows that $\mathrm{Cross}(P,Q)$ is closed in the profinite topology. 

\medskip \noindent
A similar argument applies if, among $P$ and $Q$, one is a hyperplane (say $P$) and the other a coset of standard parabolic subgroup (say $Q$). Let $r$ denote the reflection associated to $P$. Without loss of generality, we assume for simplicity that $Q$ is standard parabolic subgroup, say $\langle \Lambda \rangle$ for some subgraph $\Lambda \subset \Gamma$. Given an element $g \in W(\Gamma)$, if $gP$ is not transverse to $Q$, then $gP$ separates $Q$ and $grg^{-1}Q$, so $grg^{-1}$ does not stabilise $Q$ and therefore does not belong to $\langle \Lambda \rangle$. Otherwise, if $gP$ is transverse to $Q$, then $grg^{-1}$ belongs to $\langle \Lambda \rangle$. Indeed, there must be an edge of $Q= \langle \Lambda \rangle$ flipped by $grg^{-1}$. This edge can be written as $[h,h \ell]$ for some $h \in \langle \Lambda \rangle$ and $\ell \in \Lambda$. From $grg^{-1} h = h \ell$, we deduce that $grg^{-1}=h\ell h^{-1} \in \langle \Lambda \rangle$, as desired. Therefore, $\mathrm{Cross}(P,Q)= \varphi^{-1}(\langle \Lambda \rangle)$ where $\varphi : W(\Gamma) \to W(\Gamma)$ is defined by $g \mapsto grg^{-1}$. We already that $\langle \Lambda \rangle$ is closed in the profinite topology, and $\varphi$ is clearly continuous with respect to the profinite topology, so we conclude that $\mathrm{Cross}(P,Q)$ must be closed in the profinite topology.

\medskip \noindent
Finally, assume that $P$ and $Q$ are two cosets of standard parabolic subgroups. If $P=Q$, then $\mathrm{Cross}(P,Q)= \emptyset$ since otherwise we would find a standard parabolic subgroups with two distinct cosets that intersect, which is impossible. If $P \neq Q$, then we also have $\mathrm{Cross}(P,Q)= \emptyset$. Indeed, because $P$ and $Q$ are gated (Lemma~\ref{lem:ParabolicConvex}), the fact that they are tangent implies that they must intersect (Lemma~\ref{lem:GatedDisjoint}). Without loss of generality, assume that $1$ belongs to $P \cap Q$. So there exist subgraphs $\Lambda,\Xi \subset \Gamma$ such that $P= \langle \Lambda \rangle$ and $Q= \langle \Xi \rangle$. If there exists some $g \in W(\Gamma)$ such that $gP$ is transverse to $Q$, then $g\langle \Lambda \rangle \cap \langle \Xi \rangle$ must contain a convex even cycle. By looking at the generators labelling the edges of this cycle, we deduce that $\Lambda \cap \Xi$ has to contain some edge $[a,b] \subset \Gamma$. But, then,  $\langle a,b \rangle$ yields a convex even cycle in $\langle \Lambda \rangle \cap \langle \Xi \rangle$, proving that $P$ and $Q$ are transverse, which contradicts our assumption that they are tangent. 
\end{proof}

\begin{proof}[Proof of Theorem~\ref{thm:CoxRelConspicial}.]
It suffices to verify that the four items from Proposition~\ref{prop:CriterionConspicial} are satisfied by the action of $W(\Gamma)$ on $M(\Gamma)$ and the collection $\mathcal{P}$. The first two items follows from the cocompactness of $W(\Gamma) \curvearrowright M(\Gamma)$. The fourth item follows from the separability given by the first item of Lemma~\ref{lem:SeparabilityCoxeter}. Finally, let $P,Q \in \mathcal{P}$ be two transverse subgraphs. Because $\mathrm{stab}(Q) \mathrm{stab}(P) \subset \mathrm{Cross}(P,Q)$, and since we know from Lemma~\ref{lem:SeparabilityCoxeter} that $\mathrm{Cross}(P,Q)$ is closed with respect to the profinite topology, it follows that $\mathrm{Cross}(P,Q)$ contains the closure of $\mathrm{stab}(Q) \mathrm{stab}(P)$ with respect to the profinite topology. But $\mathrm{Cross}(P,Q)$ and $\mathrm{Osc}(P,Q)$ are clearly disjoint, so we conclude, as desired, that the closure $\mathrm{stab}(Q) \mathrm{stab}(P)$ is disjoint from $\mathrm{Osc}(P,Q)$. 
\end{proof}

\section{Obstruction to conspiciallity}\label{section:Obstruction}

\noindent
We saw in Section~\ref{section:Conspicial} how to construct an embedding into a graph product of groups from an action on a quasi-median graph. Our periagroups, however, only act on mediangle graphs. The trick is to use the hyperplane structure of mediangle graphs in order to endow them with a structure of space with partitions and then to get a quasi-median graph as explained in Section~\ref{section:Popset}. Thus, periagroups turn out to act on quasi-median graphs. Let us describe this construction in more details.

\paragraph{Conspicial actions on mediangle graphs.} Given a mediangle graph $X$, consider the set of partitions $\mathfrak{P}:= \{ \{ \text{sectors delimited by } J\} \mid J \text{ hyperplane} \}$. Then, $(X,\mathfrak{P})$ is a space with partitions. We refer to the quasi-cubulation $\mathrm{QM}(X):= \mathrm{QM}(X,\mathfrak{P})$ as the \emph{quasi-median completion} of $X$. The next proposition records the basic properties of quasi-median completions of mediangle graphs.

\begin{prop}\label{prop:QMcompletion}
Let $X$ be a mediangle graph. The canonical map $\iota : X \to \mathrm{QM}(X)$ from $X$ to its quasi-median completion $\mathrm{QM}(X)$ satisfies the following properties:
\begin{itemize}
	\item $\iota$ is an isometric embedding that sends cliques to cliques;
	\item for all cliques $C_1,C_2 \subset X$, $\iota(C_1)$ and $\iota(C_2)$ belong to the same hyperplane if and only if so do $C_1$ and $C_2$;
	\item $\iota$ induces a bijection from the hyperplanes of $X$ to the hyperplanes of $\mathrm{QM}(X)$ that preserves (non-)transversality and (non-)tangency.
\end{itemize} 
\end{prop}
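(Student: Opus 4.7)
The plan is to unpack the definitions and reduce everything to Theorem~\ref{thm:Popset} and Theorem~\ref{thm:BigHyp}. The canonical map sends $x \in X$ to its principal orientation $\sigma_x \in \mathrm{QM}(X)$, where $\sigma_x(\mathcal{P}_J)$ is the sector of $J$ containing $x$. Note that, by Theorem~\ref{thm:BigHyp}(i), distinct hyperplanes of $X$ induce distinct partitions, so the map $J \mapsto \mathcal{P}_J$ is a bijection between the hyperplanes of $X$ and $\mathfrak{P}$.

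First I would prove that $\iota$ is isometric. Two principal orientations $\sigma_x$ and $\sigma_y$ differ on $\mathcal{P}_J$ precisely when $J$ separates $x$ from $y$. Combining Theorem~\ref{thm:BigHyp}(iii), which states that $d_X(x,y)$ equals the number of hyperplanes separating $x$ and $y$, with the second item of Theorem~\ref{thm:Popset}, which equates $d_{\mathrm{QM}}(\sigma_x,\sigma_y)$ with the number of partitions they disagree on, yields $d_{\mathrm{QM}}(\iota(x),\iota(y)) = d_X(x,y)$.

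Next I would handle cliques. Let $C \subset X$ be a clique and $J$ its hyperplane. By Theorem~\ref{thm:BigHyp}(i), each sector of $J$ contains exactly one vertex of $C$. For $x \neq y$ in $C$, adjacency forces $J$ to be the unique hyperplane separating $x$ from $y$, so $\sigma_x$ and $\sigma_y$ differ only on $\mathcal{P}_J$; hence $\iota(C)$ is a complete subgraph whose edges all lie in the hyperplane $J_{\mathcal{P}_J}$ of $\mathrm{QM}(X)$. Maximality: the clique of $\mathrm{QM}(X)$ through $\iota(x)$ inside $J_{\mathcal{P}_J}$ consists of the orientations agreeing with $\sigma_x$ outside $\mathcal{P}_J$, which are naturally indexed by the sectors of $J$; since $\iota(C)$ already hits every sector, it exhausts this clique. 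This proves the first item, and the second item now follows at once: $\iota(C_1)$ and $\iota(C_2)$ belong to the common hyperplane $J_{\mathcal{P}_{J_1}} = J_{\mathcal{P}_{J_2}}$ of $\mathrm{QM}(X)$ if and only if $\mathcal{P}_{J_1} = \mathcal{P}_{J_2}$, i.e.\ if and only if $J_1 = J_2$.

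For the third item, I would compose the bijection $J \mapsto \mathcal{P}_J$ with the bijection $\mathcal{P} \mapsto J_{\mathcal{P}}$ from Theorem~\ref{thm:Popset}, which preserves (non-)transversality and (non-)tangency. It remains to check that $J \mapsto \mathcal{P}_J$ does too, and this is the step I expect to be the main obstacle. For transversality, a convex even cycle realising the crossing of $J_1,J_2$ in $X$ produces vertices in all four combinations of sectors, which forces $\mathcal{P}_{J_1},\mathcal{P}_{J_2}$ to be transverse; conversely, transversality of the partitions together with the Cycle Condition of Definition~\ref{def:Mediangle} and the convexity of sectors from Theorem~\ref{thm:BigHyp}(ii) lets one construct a convex even cycle crossed by both $J_1$ and $J_2$. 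For tangency, hyperplanes $J_1,J_2$ are tangent in $X$ iff they are distinct, not transverse, and not separated by any third hyperplane; under $J \mapsto \mathcal{P}_J$ the first two conditions translate directly, and ``not separated by a hyperplane'' translates to the absence of a sector $R$ of a third partition with $\sigma(\mathcal{P}_{J_1}) \subsetneq R \subsetneq \sigma(\mathcal{P}_{J_2})$, which is exactly the definition of tangency of partitions.
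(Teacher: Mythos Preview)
Your proposal is correct and follows essentially the same approach as the paper: both reduce the isometric embedding and the clique statement to Theorem~\ref{thm:Popset} combined with Theorem~\ref{thm:BigHyp}, and both obtain the hyperplane bijection by composing $J \mapsto \mathcal{P}_J$ with $\mathcal{P} \mapsto J_{\mathcal{P}}$.

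The one notable difference is in the third item. The paper dispatches it in a single sentence, invoking Theorem~\ref{thm:Popset} and treating the correspondence between mediangle transversality/tangency of hyperplanes and the partition-theoretic notions of Section~\ref{section:Popset} as evident. You, by contrast, flag this correspondence as the main obstacle and sketch an argument for it. Your instinct is right that something needs to be said here, and your sketch is on the correct track, though the ``all four combinations of sectors'' phrasing tacitly assumes both hyperplanes delimit exactly two sectors; in general one should rather argue via the nested/transverse dichotomy built into the axioms of a space with partitions (any sector inclusion forces nestedness, so non-nested implies transverse in the partition sense, and the convex even cycle obstructs nestedness). Similarly, your tangency translation is morally right but would benefit from spelling out that a separating hyperplane $K$ yields a strict chain $Q' \subsetneq R' \subsetneq P$ of sectors and conversely. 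None of this is a genuine gap relative to the paper, which simply asserts the conclusion; you are just being more scrupulous about a step the paper leaves implicit.
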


\begin{proof}
For all vertices $x,y \in X$, we know from Theorem~\ref{thm:Popset} that the distance in $\mathrm{QM}(X)$ between $\iota(x)$ and $\iota(y)$ is the number of hyperplanes of $X$ on which the orientations $\iota(x)$ and $\iota(y)$ differ, which coincides with the number of hyperplanes separating $x$ and $y$ in $X$, or equivalently with the distance between $x$ and $y$ in $X$. Therefore, $\iota$ is an isometric embedding. 

\medskip \noindent
This implies that $\iota$ sends a clique $C$ to a complete graph $\iota(C)$. We need to verify that $\iota(C)$ is maximal. Otherwise, there exists an orientation $\sigma$ adjacent to all the orientations in $\iota(C)$. Fix two vertices $a,b \in C$. Necessarily, $\sigma$ differs from $\iota(a)$ (resp.\ $\iota(b)$) on a single hyperplane of $X$. Because $\iota(a)$ and $\iota(b)$ only differ on the hyperplane $J$ containing $C$, it follows that $\sigma$ differ from $\iota(a)$ and $\iota(b)$ only on $J$. Let $c \in C$ denote the unique vertex of $C$ that belongs to the sector $\sigma(J)$. Then $\iota(c) = \sigma$, contradicting the fact that $\sigma$ is adjacent to all the vertices in $\iota(C)$. This proves that $\iota(C)$ is indeed a clique in $\mathrm{QM}(X)$.

\medskip \noindent
As a consequence of Theorem~\ref{thm:Popset}, $\iota(C_1)$ and $\iota(C_2)$ belong to the same hyperplane if and only if the orientations from $\iota(C_1)$ and $\iota(C_2)$ all differ on a single hyperplane, which amounts to saying that the vertices in $C_1$ and $C_2$ are pairwise separated by a single hyperplane, or equivalently that $C_1$ and $C_2$ belong to the same hyperplane in $X$. This proves the second item of our proposition.

\medskip \noindent
Finally, the third item of our proposition follows immediately from Theorem~\ref{thm:Popset}. 
\end{proof}

\noindent
Since a group acting on a mediangle graph naturally acts on the quasi-median completion, we can deduce a mediangle analogue of Theorem~\ref{thm:ConspicialQM}. First, we naturally generalise the definition of conspicial actions to mediangle graphs:

\begin{definition}
Let $G$ be a group acting faithfully on a mediangle graph $X$. The action is \emph{conspicial} if 
\begin{itemize}
	\item for every hyperplane $J$ and every $g \in G$, $J$ and $gJ$ are neither transverse nor tangent;
	\item for all transverse hyperplanes $J_1,J_2$ and for every $g \in G$, $J_1$ and $gJ_2$ are not tangent;
	\item for every hyperplane $J$, $\mathfrak{S}(J)$ acts freely on $\mathscr{S}(J)$. 
\end{itemize}
\end{definition}

\noindent
Then, Theorem~\ref{thm:ConspicialQM} generalises to:

\begin{cor}\label{cor:MediangleConspicial}
Let $G$ be a group acting conspicially on a mediangle graph~$X$. 
\begin{itemize}
	\item Fix representatives $\{J_i \mid i \in I\}$ of hyperplanes of $X$ modulo the action of $G$.
	\item Let $\Gamma$ denote the graph whose vertex-set is $\{J_i \mid i \in I\}$ and whose edges connect two hyperplanes whenever they have transverse $G$-translates.
	\item For every $i \in I$, let $G_i$ denote the group $\mathfrak{S}(J_i) \oplus K_i$, where $K_i$ is an arbitrary group of cardinality the number of orbits of $\mathfrak{S}(J_i) \curvearrowright \mathscr{S}(J_i)$.
\end{itemize}
Then $G$ embeds as a virtual retract into the graph product $\Gamma \mathcal{G}$ where $\mathcal{G}= \{G_i \mid i \in I\}$.
\end{cor}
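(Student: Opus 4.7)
The strategy is to transport the problem to the quasi-median completion $\mathrm{QM}:=\mathrm{QM}(X)$ and to invoke Theorem~\ref{thm:ConspicialQM}. The first task is to extend the action $G \curvearrowright X$ to an action $G \curvearrowright \mathrm{QM}$. This is automatic: $G$ permutes the hyperplanes of $X$ together with their sectors, hence acts on the set of partitions $\mathfrak{P}$ used to build $\mathrm{QM}(X)$, and therefore acts on orientations by $(g \cdot \sigma)(\mathcal{P}) := g \cdot \sigma(g^{-1} \mathcal{P})$. One checks that this action preserves principal orientations (and hence the connected component they span) and extends the action on $X$ via the canonical embedding $\iota : X \hookrightarrow \mathrm{QM}$. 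Faithfulness of $G \curvearrowright \mathrm{QM}$ then follows from the faithfulness of $G \curvearrowright X$ together with the fact that $\iota$ is injective and $G$-equivariant.

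Next, I would verify that $G \curvearrowright \mathrm{QM}$ is conspicial in the quasi-median sense. The first two conditions transfer directly: by the last item of Proposition~\ref{prop:QMcompletion}, the bijection $J \mapsto \iota(J)$ between the hyperplanes of $X$ and those of $\mathrm{QM}$ preserves (non-)transver\-sality and (non-)tangency, so if $J$ and $gJ$ are neither transverse nor tangent in $X$, then $\iota(J)$ and $g\iota(J) = \iota(gJ)$ are neither transverse nor tangent in $\mathrm{QM}$, and similarly for the second item. For the third condition, one observes that, by construction of $\mathrm{QM}$, the sectors delimited by the hyperplane $\iota(J)$ of $\mathrm{QM}$ are in canonical $G$-equivariant bijection with the sectors of $J$ in $X$ (a sector of $\iota(J)$ in $\mathrm{QM}$ being precisely the set of orientations assigning a fixed sector to the partition associated to $J$). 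Consequently, the permutation group $\mathfrak{S}(\iota(J))$ acting on $\mathscr{S}(\iota(J))$ coincides with $\mathfrak{S}(J)$ acting on $\mathscr{S}(J)$ through this bijection, and freeness transfers.

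Having established that $G \curvearrowright \mathrm{QM}$ is conspicial, one applies Theorem~\ref{thm:ConspicialQM}. The resulting data (representatives of hyperplanes modulo $G$, the graph encoding transverse $G$-translates, the vertex-groups $\mathfrak{S}(J_i) \oplus K_i$ where $K_i$ has cardinality the number of orbits of $\mathfrak{S}(J_i) \curvearrowright \mathscr{S}(J_i)$) coincide with the data stated in the corollary, again thanks to the bijection of Proposition~\ref{prop:QMcompletion} and the sector identification above: $G$-orbits of hyperplanes, transversality of $G$-translates, rotative data, and sector orbits are the same whether computed in $X$ or in $\mathrm{QM}$. Theorem~\ref{thm:ConspicialQM} then yields the desired virtual retract embedding of $G$ into $\Gamma \mathcal{G}$.

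The only mildly delicate point is the canonical identification of the sectors of $\iota(J)$ with those of $J$ and its $G$-equivariance; everything else is a matter of repackaging Proposition~\ref{prop:QMcompletion} and Theorem~\ref{thm:ConspicialQM}.
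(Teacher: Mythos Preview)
Your proposal is correct and follows exactly the paper's approach: extend the action to the quasi-median completion $\mathrm{QM}(X)$, use Proposition~\ref{prop:QMcompletion} to transfer conspiciality, and apply Theorem~\ref{thm:ConspicialQM}. You simply spell out in more detail the points the paper leaves implicit (the extension of the action to orientations, faithfulness, and the $G$-equivariant identification of sectors needed for the third conspiciality condition and for matching the data $\mathfrak{S}(J_i)$, $\mathscr{S}(J_i)$).
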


\begin{proof}
The action of $G$ on $X$ extends to an action of $G$ on the quasi-median completion $\mathrm{QM}(X)$ of $X$. It follows from Proposition~\ref{prop:QMcompletion} that this action is also conspicial and that Theorem~\ref{thm:ConspicialQM} applies, providing the desired conclusion.
\end{proof}

\noindent
In order to prove Theorem~\ref{thm:BigIntro}, it then suffices to show that periagroups act virtually conspicially on their mediangle graphs. The rest of the section is dedicated to this question.

\paragraph{The case of periagroups.} The action of a periagroup on its mediangle graph is usually not conspicial. Our goal is to show that a periagroup always splits as a semidirect product between a graph product and a Coxeter group and that the obstruction for the action to be conspicial is contained in the Coxeter part. This is Proposition~\ref{prop:Obstruction}. However, before stating and proving this statement, we need to introduce some notation.

\medskip \noindent
Let $\Pi:= \Pi(\Gamma, \mathcal{G},\lambda)$ be a periagroup and let $M:= M(\Gamma, \mathcal{G}, \lambda)$ denote its mediangle graph. A vertex $u \in V(\Gamma)$ is of \emph{type GP} if all the edges containing $u$ have label $2$; otherwise, $u$ is \emph{of type C}. Let $\Psi$ denote the subgraph of $\Gamma$ induced by the vertices of type C. Notice that the subgroup $\langle \Psi \rangle$ of $\Pi$ is naturally isomorphic to the Coxeter group $C(\Psi)$. According to Lemma~\ref{lem:ParabolicConvex}, the subgraph $\Delta:= \langle \Psi \rangle \subset M$ is gated. A hyperplane of $M$ is \emph{$\Delta$-peripheral} if it is tangent to $\Delta$, i.e.\ it contains an edge having exactly one endpoint in $\Delta$. 

\begin{prop}
The periagroup $\Pi$ splits as $\mathrm{Rot} \rtimes C(\Psi)$ where $\mathrm{Rot}$ is a graph product admitting $\{ \mathrm{stab}_\circlearrowright (J) \mid J \text{ $\Delta$-peripheral}\}$ as a basis. Moreover, $\Delta$ is a fundamental domain for the action of $C(\Psi)$ on $M$. 
\end{prop}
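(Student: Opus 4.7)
The plan is to apply \cite[Theorem~6.1]{Mediangle} (the same machinery underlying Lemma~\ref{lem:GatedCocompact}) to the gated subgraph $\Delta \subset M$ together with the cocompact action of $C(\Psi) \curvearrowright \Delta$. Once invoked, this result directly provides a semi-direct product decomposition $H^+ := \langle C(\Psi), \mathrm{Rot} \rangle = \mathrm{Rot} \rtimes C(\Psi)$, endows $\mathrm{Rot}$ with the graph product structure whose vertex-groups are the rotative-stabilisers $\mathrm{stab}_{\circlearrowright}(J)$ of the $\Delta$-peripheral hyperplanes $J$, and shows that $\mathrm{Rot}$ acts on $M$ with $\Delta$ as a fundamental domain. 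Everything left to verify is the identification $H^+ = \Pi$.

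My first step would be to describe the $\Delta$-peripheral hyperplanes. For each vertex $u$ of type GP (i.e.\ $u \notin \Psi$), Lemma~\ref{lem:NoObsGP} ensures that the hyperplane $J_u$ containing the clique $\langle u \rangle = G_u$ is right; any of its edges $[1,s]$ with $s \in G_u \setminus \{1\}$ has $1 \in \Delta$ but $s \notin \Delta$, so $J_u$ is $\Delta$-peripheral. Conversely, any $\Delta$-peripheral hyperplane contains some edge $[v,v']$ with $v \in \Delta$ and $v' \notin \Delta$; translating by $v^{-1} \in C(\Psi)$ yields an edge $[1, v^{-1}v']$ whose non-trivial endpoint is a generator in some $G_u$ with $u \notin \Psi$, so this hyperplane is a $C(\Psi)$-translate of some $J_u$ of GP type.

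The second step is the equality $H^+ = \Pi$. Since $\Pi$ is generated by the $G_u$ and $C(\Psi)$ already contains those of type C, it suffices to show $G_u \subset \mathrm{Rot}$ for every type GP vertex $u$, and for this I claim $\mathrm{stab}_{\circlearrowright}(J_u) = G_u$. By Lemma~\ref{lem:CarrierRightHypPeriagroup} the cliques of $J_u$ are exactly the $\langle u \rangle$-cosets inside $\langle \mathrm{star}(u)\rangle$, and left multiplication by $g \in G_u$ preserves each such coset because $g$ commutes with every generator coming from $\langle \mathrm{link}(u)\rangle$; hence $G_u \leq \mathrm{stab}_{\circlearrowright}(J_u)$. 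Lemma~\ref{lem:RotativeStabPeria} says $\mathrm{stab}_{\circlearrowright}(J_u)$ acts freely and transitively on the sectors delimited by $J_u$, and $G_u$ already acts freely and transitively on these sectors (one per element of the clique $\langle u\rangle = G_u$, by Theorem~\ref{thm:BigHyp}(i)); this forces equality of the two groups.

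The main obstacle is really packaged into the cited Theorem~6.1 of \cite{Mediangle}, which simultaneously delivers the semi-direct product decomposition, the fundamental domain statement, and the graph product structure of $\mathrm{Rot}$ from the peripheral rotative-stabilisers. Once that result is in hand, the remainder of the proof is the bookkeeping above: characterising the $\Delta$-peripheral hyperplanes as $C(\Psi)$-translates of the $J_u$ with $u$ of type GP, and matching their rotative stabilisers with the vertex-groups of $\Pi$ missing from $C(\Psi)$.
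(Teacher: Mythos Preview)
Your proposal is correct and follows the same route as the paper: both simply invoke \cite[Theorem~6.1]{Mediangle} and its proof, with your write-up making explicit the bookkeeping (identifying the $\Delta$-peripheral hyperplanes and checking $H^+=\Pi$ via $\mathrm{stab}_{\circlearrowright}(J_u)=G_u$) that the paper leaves implicit in its one-line citation. Note also that your reading of the fundamental-domain clause, namely that $\Delta$ is a fundamental domain for $\mathrm{Rot}$ rather than for $C(\Psi)$, is the correct one and matches the usage in the proof of Lemma~\ref{lem:GatedCocompact}; the statement as printed appears to contain a typo.
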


\noindent
The proposition follows from \cite[Theorem~6.1]{Mediangle} and its proof. Let
$$\mathrm{Obs}:= \left\{ g \in \Pi ~\left|~ \begin{array}{l} \text{$\exists J,$ $J$ and $gJ$ transverse or tangent; or $\exists J,H$ transverse}\\ \text{(resp.\ tangent), $gJ$ tangent (resp.\ transverse) to $H$} \end{array} \right. \right\}$$
denote the obstruction for the action $\Pi \curvearrowright M$ to be conspicial; and let
$$\mathrm{CoxObs}:= \left\{ g \in C(\Psi) ~\left|~ \begin{array}{l} \text{$\exists P \in \mathcal{P},$ $J$ and $gJ$ transverse or tangent; or $\exists J,H \in \mathcal{P}$}\\ \text{transverse (resp.\ tangent), $gJ$ tangent (resp.\ transverse) to $H$} \end{array} \right. \right\}$$
denote the obstruction for the action $C(\Psi) \curvearrowright \Delta$ to be conspicial relative to the collection $\mathcal{P}:= \{\text{hyperplanes, cosets of standard parabolic subgroups}\}$. The rest of the section is dedicated to the proof of the following assertion:

\begin{prop}\label{prop:Obstruction}
The inclusion $\mathrm{Obs} \subset \mathrm{Rot} \cdot \mathrm{CoxObs}$ holds.
\end{prop}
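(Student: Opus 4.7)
The plan is to leverage the semidirect decomposition $\Pi = \mathrm{Rot} \rtimes C(\Psi)$: given $g \in \mathrm{Obs}$, write uniquely $g = rc$ with $r \in \mathrm{Rot}$ and $c \in C(\Psi)$, and show that any obstruction witnessed by $g$ on $M$ forces $c$ into $\mathrm{CoxObs}$.

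The core construction is a dictionary $\iota$ associating to each hyperplane $J_0$ of $M$ which is \emph{adjacent to $\Delta$} (i.e. crosses $\Delta$ or is $\Delta$-peripheral) an element of $\mathcal{P}$. Concretely, if $J_0$ crosses $\Delta$ then its label must be a type C vertex (a type GP label would make $J_0$ right with carrier a direct product that only touches $\Delta$ along a peripheral coset), and $J_0 \cap \Delta$ is a hyperplane of $\Delta$; if $J_0$ is $\Delta$-peripheral then its label is a type GP vertex $u$ and, by Lemma~\ref{lem:CarrierRightHypPeriagroup}, $N(J_0) \cap \Delta$ is a coset of the standard parabolic subgroup $\langle \mathrm{link}(u) \cap \Psi \rangle$ of $C(\Psi)$. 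The point I would verify is that $\iota$ is $C(\Psi)$-equivariant and preserves both transversality and tangency, using the extended notions defined for subgraphs at the start of Section~\ref{section:Obstruction}.

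Next, I would exploit the freeness of $\mathrm{Rot} \curvearrowright M$ with $\Delta$ as a fundamental domain: every hyperplane of $M$ has a $\mathrm{Rot}$-translate adjacent to $\Delta$. Given an obstruction of the first type ($J$ with $J$ and $gJ$ transverse or tangent), pick $s \in \mathrm{Rot}$ with $sJ$ adjacent to $\Delta$. Replace $J$ by $sJ$ and $g$ by $sgs^{-1}$; since $C(\Psi)$ normalises $\mathrm{Rot}$, the Coxeter part of $sgs^{-1}$ is still $c$, and so it suffices to work with this reduced configuration. Because $C(\Psi)$ stabilises $\Delta$ setwise, $cJ$ is also adjacent to $\Delta$, so $\iota$ produces $P := \iota(J)$ and $cP = \iota(cJ)$ in $\mathcal{P}$; the transfer lemma from the previous paragraph then gives $c \in \mathrm{CoxObs}$. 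The second type of obstruction (a transverse/tangent pair $J_1,J_2$ with $gJ_1$ tangent/transverse to $J_2$) is treated symmetrically, reducing $J_1$ and $J_2$ simultaneously to be adjacent to $\Delta$.

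The main obstacle is the transversality/tangency-preserving step of the dictionary: after the reduction, the residual rotation in the conjugate $sgs^{-1}$ need not disappear, so the witnessing convex even cycle or shared carrier vertex may lie outside $\Delta$. The heart of the argument will be to show that such a witness can be transported back into $\Delta$ by an element of $\mathrm{Rot}$, using gatedness of $\Delta$ and of right-hyperplane carriers (Lemmas~\ref{lem:ParabolicConvex} and~\ref{lem:RightHypGated}), the projection calculus of Theorem~\ref{thm:BigProjection}, and Lemma~\ref{lem:RotateCloser} to iteratively decrease the distance from $\Delta$; the residual rotation is then absorbed into the cosets parameterising the peripheral elements of $\mathcal{P}$, leaving a genuine transverse/tangent configuration for $c$ within $\Delta$.
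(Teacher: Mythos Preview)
Your proposal is essentially the paper's approach, recast around a uniform dictionary $\iota$ rather than an explicit case split by hyperplane type (both C, both GP, mixed). The paper executes the transport step you sketch in your last paragraph by iteratively left-multiplying $g$ by rotative-stabilisers of $\Delta$-peripheral hyperplanes: Lemma~\ref{lem:RotateCloser} decreases the distance of the witnessing edge to $\Delta$, and Lemma~\ref{lem:RightAngledCommutation} guarantees that each such element fixes the relevant type-C hyperplane (the peripheral hyperplane being transverse to it because nothing separates the witnessing pair); after finitely many steps $g$ itself lies in $C(\Psi)$, and then Lemma~\ref{lem:InDeltaInM} (in the both-C case) or a projection argument onto the gated carrier $N(H)$ (in the mixed case) finishes. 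Your dictionary is exactly what the paper uses implicitly ($J \mapsto J\cap\Delta$ for type C, $H \mapsto N(H)\cap\Delta$ for type GP via Lemma~\ref{lem:CarrierInDeltaParabolic}); note that the both-GP case is vacuous by Lemma~\ref{lem:LabelHypGP}, so you never actually need to transfer transversality or tangency between two parabolic cosets.
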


\noindent
We start by proving two preliminary lemmas.

\begin{lemma}\label{lem:CarrierInDeltaParabolic}
For every $\Delta$-peripheral hyperplane $J$, the equality 
$$N(J) \cap \Delta = g \langle \text{vertices of type C adjacent to $u$} \rangle$$
holds for some $g \in C(\Psi)$ and some vertex $u \in V(\Gamma)$ of type GP.
\end{lemma}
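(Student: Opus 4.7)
My plan is as follows. Being $\Delta$-peripheral, $J$ contains an edge $[a,b]$ with $a \in \Delta$ and $b \notin \Delta$. Writing $b = as$ with $1 \neq s \in G_u$, one must have $u \notin \Psi$: otherwise $as \in \langle \Psi \rangle = \Delta$, a contradiction. Thus $u$ is of type GP, and by Lemma~\ref{lem:NoObsGP} the hyperplane $J_u$ of $M$ containing the edge $[1,s]$ is right. Since the edge $a \cdot [1,s] = [a,b]$ belongs to both $aJ_u$ and $J$, and since any hyperplane is determined by one of its edges, we conclude $J = aJ_u$; in particular $J$ is right, and by Lemma~\ref{lem:CarrierRightHypPeriagroup} together with the $\Pi$-equivariance of carriers,
$$N(J) = a \cdot N(J_u) = a \cdot \langle \mathrm{star}(u) \rangle.$$

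Setting $g := a \in \langle \Psi \rangle = C(\Psi)$, the equality $g\Delta = \Delta$ yields
$$N(J) \cap \Delta = g\langle \mathrm{star}(u) \rangle \cap g\Delta = g \bigl( \langle \mathrm{star}(u) \rangle \cap \langle \Psi \rangle \bigr).$$
Since $u$ is of type GP, it does not lie in $\Psi$, so $\mathrm{star}(u) \cap \Psi = \mathrm{link}(u) \cap \Psi$ is precisely the set of vertices of type C adjacent to $u$. The conclusion will therefore follow once we establish the parabolic-intersection identity $\langle \mathrm{star}(u) \rangle \cap \langle \Psi \rangle = \langle \mathrm{star}(u) \cap \Psi \rangle$.

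The main obstacle I anticipate is exactly this last identity, which is the standard fact that two standard parabolic subgroups of a periagroup intersect according to the intersection of their vertex sets. I would hope to cite it directly from \cite{Mediangle}; failing that, one can argue geometrically by noting that $\langle \mathrm{star}(u) \rangle$ and $\langle \Psi \rangle$ are gated subgraphs of $M$ by Lemma~\ref{lem:ParabolicConvex}, so their intersection is itself gated, connected (it contains $1$), and its edges incident to $1$ are labelled exactly by the non-trivial elements of the vertex-groups $G_v$ for $v \in \mathrm{star}(u) \cap \Psi$; a short induction on distance from $1$, using uniqueness of normal forms in periagroups, then identifies this intersection with $\langle \mathrm{star}(u) \cap \Psi \rangle$. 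Once this is in hand, the lemma follows from the chain of equalities above.
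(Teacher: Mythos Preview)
Your argument is correct and follows essentially the same route as the paper's proof: translate by an element of $C(\Psi)$ so that the peripheral edge sits at the identity, identify $J$ with $J_u$ for a vertex $u$ of type GP, invoke Lemma~\ref{lem:CarrierRightHypPeriagroup} to get $N(J)=\langle\mathrm{star}(u)\rangle$, and then intersect with $\langle\Psi\rangle$. The only difference is cosmetic: the paper phrases the translation as ``up to translating by an element of $C(\Psi)$ we may assume $1\in N(J)\cap\Delta$'', whereas you carry the element $g=a$ explicitly throughout; and the paper simply asserts the parabolic-intersection identity $\langle\mathrm{star}(u)\rangle\cap\langle\Psi\rangle=\langle\mathrm{star}(u)\cap\Psi\rangle$ in one line without further comment, while you (rightly) flag it and sketch how to justify it from gatedness or normal forms.
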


\begin{proof}
Up to translating by an element of $C(\Psi)$, we can assume that $N(J) \cap \Delta$ contains the vertex $1$. Notice that, as a consequence of Lemma~\ref{lem:CarrierRightHypPeriagroup}, there exists a vertex $u \in V(\Gamma)$ of type GP such that $N(J)= \langle \mathrm{star}(u) \rangle$. Hence
$$N(J) \cap \Delta = \langle \mathrm{star}(u) \rangle \cap \langle \Psi \rangle = \langle \mathrm{star}(u) \cap \Delta \rangle = \langle \text{vertices of type C adjacent to $u$} \rangle,$$
as desired.
\end{proof}

\noindent
In the sequel, in view of Lemma~\ref{lem:NoObsGP} and of our distinction between vertices of type GP and of type C in $\Gamma$, we refer to right hyperplanes as hyperplanes \emph{of type GP} and to hyperplanes \emph{of type C} for the other hyperplanes. Notice that $\Delta$-peripheral hyperplanes are of type GP. As explained in Section~\ref{section:Periagroups}, right hyperplanes, or equivalently hyperplanes of type GP, are naturally labelled by vertices of $\Gamma$. It is worth noticing that these labels are of type GP, making our terminology coherent. 

\begin{lemma}\label{lem:InDeltaInM}
Two hyperplanes $J$ and $H$ crossing $\Delta$ are transverse (resp.\ tangent) in $\Delta$ if and only if they are transverse (resp.\ tangent) in $M$
\end{lemma}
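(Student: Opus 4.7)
The plan is to exploit the fact that $\Delta$ is gated in $M$ (Lemma~\ref{lem:ParabolicConvex}), hence convex. As a consequence, a subgraph of $\Delta$ is convex in $\Delta$ if and only if it is convex in $M$, and every geodesic in $M$ between two vertices of $\Delta$ stays inside $\Delta$; in particular, two vertices of $\Delta$ are separated by a hyperplane $K$ in $\Delta$ if and only if they are separated by $K$ in $M$ (by Theorem~\ref{thm:BigHyp}(iii), counting crossings along geodesics).

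First I would handle transversality. The forward direction is immediate: a convex even cycle $C \subset \Delta$ crossed by $J$ and $H$ remains convex in $M$, so $J$ and $H$ are still transverse in $M$. Conversely, given a convex even cycle $C \subset M$ witnessing transversality in $M$, the plan is to project $C$ onto $\Delta$. Since $J$ and $H$ both cross $\Delta$, for any edges $e_J \subset C \cap J$ and $e_H \subset C \cap H$, Theorem~\ref{thm:BigProjection} forces their projections onto $\Delta$ to be single edges still contained in $J$ and $H$ respectively, hence two distinct edges of $\Delta$. The projection of $C$ onto $\Delta$ therefore contains at least two edges, so \cite[Proposition~3.21]{Mediangle} yields that $C$ projects isometrically onto a convex even cycle $C' \subset \Delta$ crossed by both $J$ and $H$, witnessing transversality in $\Delta$.

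Next, for tangency, distinctness of $J$ and $H$ is a condition on the hyperplanes of $M$ themselves and transfers trivially, while non-transversality transfers by the previous paragraph. It therefore remains to show that $J$ and $H$ are separated by a third hyperplane in $\Delta$ if and only if they are separated by one in $M$. The key observation is that if some $K$ separates $J$ from $H$ (in either graph), then $K$ cannot be transverse to $J$ or $H$, so by the transversality equivalence $K$ fails to be transverse to them in both graphs; consequently each of $J, H$ lies in a single sector of $K$, determined by any choice of edge of $J \cap \Delta$ or $H \cap \Delta$. Having such representative edges land in distinct sectors of $K$ in $\Delta$ is the same as their landing in distinct sectors of $K$ in $M$, by the preliminary observation on separation of vertices, and the equivalence follows.

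The main obstacle I expect is the backward direction of transversality, where projecting an arbitrary convex even cycle of $M$ down to $\Delta$ requires the ``isometric projection of convex cycles'' result from \cite{Mediangle}; once that is in hand, everything else is a routine consequence of gated-ness of $\Delta$.
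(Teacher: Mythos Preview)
Your forward direction of transversality and your tangency argument are essentially correct and close in spirit to the paper's. The gap is in the backward direction of transversality, where you invoke \cite[Proposition~3.21]{Mediangle} to conclude that $C$ projects isometrically onto a convex even cycle in $\Delta$. That proposition, as used elsewhere in this paper (see the proof of Lemma~\ref{lem:RightHypGated}), concerns the projection of one convex even cycle onto another gated convex even cycle, not onto a large gated subgraph such as $\Delta$. In general the image of $C$ under the gate map to $\Delta$ need not be a convex even cycle: only those hyperplanes of $C$ that happen to cross $\Delta$ survive as edges in the image, and hyperplanes of type~C crossing $C$ other than $J$ and $H$ need not cross $\Delta$; when some do not, the image is a strictly shorter closed walk which, even when it is a cycle, need not be convex.

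The paper takes a different route. It first observes that every hyperplane crossing $C$ is of type~C, so any $\Delta$-peripheral hyperplane $K$ (necessarily of type~GP) separating a vertex of $C$ from $\Delta$ in fact separates all of $C$ from $\Delta$; since $J$ and $H$ meet both sides of $K$ they are transverse to $K$, whence Lemma~\ref{lem:RightAngledCommutation} guarantees that a suitable $h \in \mathrm{stab}_\circlearrowright(K)$ fixes $J$ and $H$, while Lemma~\ref{lem:RotateCloser} makes $hC$ strictly closer to $\Delta$. Iterating lands a convex even cycle inside $\Delta$ still crossed by $J$ and $H$. Your projection idea can be repaired, but via Proposition~\ref{prop:SpanningACycle} rather than Proposition~3.21: choose \emph{consecutive} edges $e_J, e_H$ of $C$ (always possible in a convex even cycle), project them to consecutive edges of $\Delta$ lying in $J$ and $H$, and apply Proposition~\ref{prop:SpanningACycle} to span a convex even cycle $C'$ from these two edges; then $C' \subset \Delta$ because its two edge-labels come from vertex-groups indexed by $\Psi$, so $C'$ lies in the coset $b'\langle \Psi \rangle = \langle \Psi \rangle = \Delta$.
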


\begin{proof}
If $J$ and $H$ are transverse in $\Delta$, they are clearly transverse in $M$. Conversely, assume that $J$ and $H$ are transverse in $M$. So there is a convex even cycle $C$ containing pairs of opposite edges in $J$ and $H$. Notice that every hyperplane crossing $C$ must be of type C. Indeed, this is clear if $C$ has length $>4$. Otherwise, $C$ is crossed by exactly two hyperplanes, which have to be $J$ and $H$. Therefore, if $C$ is not contained in $\Delta$, then we can find a $\Delta$-peripheral hyperplane $K$ separating $\Delta$ from some vertex of $C$, and this hyperplane has to separate $\Delta$ and $C$ entirely. Necessarily, $K$ is transverse to both $J$ and $K$. Given an element $h \in \mathrm{stab}_\circlearrowright(K)$ that sends $C$ in the sector delimited by $K$ containing $\Delta$, it follows from Lemmas~\ref{lem:RotateCloser} and~\ref{lem:RightAngledCommutation} that $d(hC,\Delta)<d(C,\Delta)$ and that $h$ stabilises both $J$ and $H$. Thus, $hC$ is a new convex even cycle crossed by $J$ and $H$ but closer to $\Delta$ than $C$. By iterating the argument as much as possible, we eventually find a cycle contained in $\Delta$ in which $J$ and $H$ meet, proving that $J$ and $H$ are transverse in~$\Delta$.

\medskip \noindent
In order to conclude the proof of our lemma, it remains to verify that $J$ and $H$ are separated by a hyperplane in $\Delta$ if and only if they are separated by a hyperplane in $M$. If they are separated by a hyperplane, say $K$, in $\Delta$, then we know from the previous paragraph that $K$ cannot be transverse to $J$ nor to $H$ in $M$. This implies that $K$ separates $J$ and $H$ in $M$. Conversely, it is clear that, if $J$ and $H$ are separated by a hyperplane in $M$, then they are separated by a hyperplane in $\Delta$. 
\end{proof}

\begin{proof}[Proof of Proposition~\ref{prop:Obstruction}.]
Let $g \in \mathrm{Obs}$ be an element in our obstruction. Our goal is to prove that $g$ belongs to $\mathrm{Rot} \cdot \mathrm{CoxObs}$.

\medskip \noindent
First, assume that there exists a hyperplane $J$ such that $J$ and $gJ$ are either transverse or tangent (or equivalently, that $J$ and $gJ$ are distinct but not separated by a third hyperplane). Up to translating by an element of $\mathrm{Rot}$, we can assume that $J$ crosses $\Delta$. Fix an edge $e \subset \Delta$ that belongs to $J$. If $ge$ is not contained in $\Delta$, then we can find a $\Delta$-peripheral hyperplane $H$ that separates $\Delta$ and $ge$. Given an element $h \in \mathrm{stab}_\circlearrowright(H)$ that sends $ge$ in the sector delimited by $H$ that contains $\Delta$, we know from Lemma~\ref{lem:RotateCloser} that $d(hge,\Delta)<d(ge,\Delta)$. Moreover, since no hyperplane can separate $J$ and $gJ$, necessarily $H$ and $J$ are transverse, so Lemma~\ref{lem:RightAngledCommutation} implies that $hJ=J$. Thus, up to left-multiplying $g$ with an element of $\mathrm{Rot}$, we can assume that $ge$ is contained in $\Delta$. This implies that $g \in C(\Psi)$. Moreover, we know from Lemma~\ref{lem:InDeltaInM} that $J$ and $gJ$ are transverse or tangent in $\Delta$. We conclude that $g \in \mathrm{CoxObs}$.

\medskip \noindent
Next, assume that there exist two transverse hyperplanes $J$ and $H$ such that $gJ$ is tangent to $H$. There are four cases to distinguish, depending on whether $J$ and $H$ are of type GP or C.

\medskip \noindent
Assume that $J$ and $H$ are both of type GP. Then, we deduce from Lemma~\ref{lem:LabelHypGP} that, because $J$ and $H$ are transverse, their labels are adjacent; and, because $gJ$ and $H$ are tangent, their labels are distinct and non-adjacent. But $J$ and $gJ$ have the same label, so this is not possible.

\medskip \noindent
Assume that $J$ and $H$ are both of type C. Up to translating by an element of $\mathrm{Rot}$, we can assume that $J$ and $H$ both cross $\Delta$. Fix an edge $e \subset \Delta$ that belongs to $J$. If $ge$ does not belong to $\Delta$, then we can find a $\Delta$-peripheral hyperplane $K$ that separates $ge$ from $\Delta$. Let $h \in \mathrm{stab}_\circlearrowright(K)$ be an element that sends $ge$ in the sector delimited by $K$ that contains $\Delta$. According to ~\ref{lem:RotateCloser}, $d(hge,\Delta)<d(ge,\Delta)$. Moreover, because there is no hyperplane separating $H$ and $gJ$, necessarily $K$ and $H$ are transverse, hence $hH=H$ according to Lemma~\ref{lem:RightAngledCommutation}. Thus, up to left-multiplying $g$ with an element of $\mathrm{Rot}$, we can assume that $ge \subset \Delta$. This implies that $g \in C(\Psi)$. Moreover, it follows from Lemma~\ref{lem:InDeltaInM} that $J$ and $H$ are transverse in $\Delta$ and that $H$ and $gJ$ are tangent in $\Delta$. We conclude that $g \in \mathrm{CoxObs}$.

\medskip \noindent
Assume that $J$ is of type C and $H$ of type GP. Up to translating by an element of $\mathrm{Rot}$, we assume that $J$ crosses $\Delta$ and that $H$ is $\Delta$-peripheral. By repeating the argument from the previous paragraph, we can assume that $g \in C(\Psi)$ and that $gJ$ crosses $\Delta$. Indeed, fix an edge $e \subset \Delta$ that belongs to $J$. If $ge$ is not contained in $\Delta$, then we can find a $\Delta$-peripheral hyperplane $K$ that separates $ge$ from $\Delta$. Let $h \in \mathrm{stab}_\circlearrowright(K)$ be an element that sends $ge$ in the sector delimited by $K$ that contains $\Delta$. According to ~\ref{lem:RotateCloser}, $d(hge,\Delta)<d(ge,\Delta)$. Moreover, because there is no hyperplane separating $H$ and $gJ$, necessarily $K$ and $H$ are transverse, hence $hH=H$ according to Lemma~\ref{lem:RightAngledCommutation}. Thus, up to left-multiplying $g$ with an element of $\mathrm{Rot}$, we can assume that $ge \subset \Delta$. This implies, as desired, that $g \in C(\Psi)$ and that $gJ$ crosses $\Delta$. Now, consider $P:= N(H) \cap \Delta$. According to Lemma~\ref{lem:CarrierInDeltaParabolic}, this corresponds to a coset of a standard parabolic subgroup of $C(\Psi)$. Moreover, because $J$ and $H$ are transverse, necessarily $J$ is transverse to $P$ in $\Delta$. So, if we show that $gJ$ and $P$ are tangent in $\Delta$, then we can conclude that $g\in \mathrm{CoxObs}$. 

\medskip \noindent
Assume for contradiction that $gJ$ and $P$ are not tangent in $\Delta$, which amounts to saying that there exists a hyperplane $K$ separating $gJ$ and $P$ in $\Delta$. Because there is no hyperplane separating $H$ and $gJ$, necessarily $K$ must be transverse to either $H$ or $gJ$. It follows from Lemma~\ref{lem:InDeltaInM} that it cannot be transverse to $gJ$, so $K$ and $H$ are transverse. This amounts to saying that, given an edge $e \subset \Delta$ contained in $K$, the endpoints of $e$ have distinct projections to $N(H)$. (Recall that $N(H)$ is gated according to Lemma~\ref{lem:RightHypGated}.) But, fixing an arbitrary vertex $o \in P$, every endpoint of $e$ is connected to $o$ by a geodesic passing through its projection on $N(H)$. The convexity of $\Delta$ implies that the whole geodesic, and in particular our projection, belongs to $\Delta$. Therefore, the projection of $e$ on $N(H)$ is an edge contained in $P$, contradicting the fact that $K$ separates $P$ and $gJ$.

\medskip \noindent
Finally, assume that $J$ is of type GP and $H$ of type C. Then $J$ and $H$ are transverse, $g^{-1}H$ is tangent to $J$, so we are back in the previous case, proving that $g^{-1} \in \mathrm{Rot} \cdot \mathrm{CoxObs}$. Therefore,
$$g \in \mathrm{CoxObs}^{-1} \cdot \mathrm{Rot} = \mathrm{Rot} \cdot \mathrm{CoxObs}^{-1}= \mathrm{Rot} \cdot \mathrm{CoxObs}$$
where the first equality follows from the normality of $\mathrm{Rot}$ and where the second equality follows from the elementary observation that $\mathrm{CoxObs}= \mathrm{CoxObs}^{-1}$. Indeed, if $h \in \mathrm{CoxObs}$ then:
\begin{itemize}
	\item either there exists a hyperplane $J$ such that $J$ and $gJ$ are transverse or tangent, in which case $J$ and $g^{-1}J$ are also transverse or tangent;
	\item or there exist two transverse hyperplanes $J$ and $H$ such that $gJ$ is tangent to $H$, in which case $g^{-1}H$ is tangent to $J$.
\end{itemize}
In both cases, we conclude that $h^{-1}$ also belongs to $\mathrm{CoxObs}$. 
\end{proof}

\section{Proof of the main theorem}

\begin{proof}[Proof of Theorem~\ref{thm:BigIntro}.]
Let $\Pi:= \Pi(\Gamma, \mathcal{G},\lambda)$ be a periagroup with $\Gamma$ finite and let $M:= M(\Gamma, \mathcal{G},\lambda)$ denote its mediangle graph. As explained in Section~\ref{section:Obstruction}, $\Pi$ splits as a semidirect product $\mathrm{Rot} \rtimes C(\Psi)$. We know from Theorem~\ref{thm:CoxRelConspicial} that $C(\Psi)$ contains a finite-index subgroup $H \leq C(\Psi)$ such that the action $H \curvearrowright \Delta$ is conspicial relative to $\mathcal{P}$. In other words, $H \cap \mathrm{CoxObs} = \emptyset$. Setting $\Pi^-:= \mathrm{Rot} \cdot H$, we deduce from Proposition~\ref{prop:Obstruction} that
$$\Pi^- \cap \mathrm{Obs} \subset \Pi^- \cap \left( \mathrm{Rot} \cdot \mathrm{CoxObs} \right) = \mathrm{Rot} \cdot \left( H \cap \mathrm{CoxObs} \right)= \emptyset.$$
Therefore, the action $\Pi^- \curvearrowright M$ is conspicial. It follows from Corollary~\ref{cor:MediangleConspicial} that $\Pi^-$ embeds as virtual retracts into some graph product $\Phi \mathcal{H}$ where $\Phi$ is finite. Let us look at more carefully the vertex-groups of this graph product. 

\medskip \noindent
Notice that, for every hyperplane $J$ of type GP, the rotative-stabilisers of $J$ in $\Pi$, is contained in $\mathrm{Rot}$, and a fortiori in $\Pi^-$. Therefore, the rotative-stabiliser of $J$ in $\Pi^-$ permutes freely and transitively on the sectors delimited by $J$. This implies that the vertex-groups of $\Phi \mathcal{H}$ given by hyperplanes of type GP are rotative-stabilisers of hyperplanes of type GP, which are conjugates of vertex-groups of $\Pi$. Otherwise, if $J$ is a hyperplane of type C, then it delimits exactly two sectors (Lemma~\ref{lem:NotRight}), so the vertex-groups of $\Phi \mathcal{H}$ given by hyperplanes of type C are cyclic of order two. Therefore, the groups in $\mathcal{H}$ are isomorphic to groups in $\mathcal{G}$. 
\end{proof}

\addcontentsline{toc}{section}{References}

\bibliographystyle{alpha}
{\footnotesize\bibliography{PeriaVirtGP}}

\Address

%\addcontentsline{toc}{section}{Index}
%
%\printindex

\end{document}